\documentclass[11pt,twoside]{article}
\usepackage{amsmath}
\usepackage{ctex}
\usepackage{amssymb}
\usepackage{fancyhdr}
\usepackage{latexsym}
\usepackage{bbding}
\usepackage{mathrsfs}
\usepackage{wasysym}
\usepackage{cite}
\usepackage{color}
\usepackage{enumerate}
\usepackage[hidelinks]{hyperref}
\usepackage{graphicx}
\usepackage{subfigure}
\usepackage{appendix}
\usepackage{float}
\usepackage{caption}
\usepackage[capitalize]{cleveref}
\usepackage{siunitx}
\usepackage{epstopdf}
\usepackage{indentfirst}
\usepackage{booktabs}
\usepackage[T1]{fontenc}
\usepackage{array}
\usepackage{bookmark}

\allowdisplaybreaks[4]
\newtheorem{theorem}{Theorem}[section]
\newtheorem{lemma}{Lemma}[section]

\newtheorem{definition}[theorem]{Definition}
\newtheorem{remark}{Remark}[section]

\numberwithin{equation}{section}
\newenvironment{proof}[1][Proof]{\noindent\textbf{#1.}}{\hfill$\Box$}
\allowdisplaybreaks

\begin{document}
\date{}
\title{\bf{Propagation phenomena of spatially periodic combustion reaction-diffusion equations around an obstacle}}
\author{Long, Yang $^{a}$,\ \ Jia-Fang Zhang$^{a}$\ \ and\ \ Wei-Jie Sheng$^{b,}$\thanks{Corresponding author (E-mail
address: shengwj09@hit.edu.cn)}\\
\footnotesize{$^{a}$School of Mathematics and Statistics, Henan University,}\\
\footnotesize{Kaifeng, Henan 475001, People's Republic of China}\\
\footnotesize{$^b$School of Mathematics, Harbin Institute of Technology,}\\
\footnotesize{Harbin, Heilongjiang, 150001, People's Republic of China}}
\maketitle

\begin{abstract}
This paper is concerned with propagation phenomena of spatially periodic combustion reaction-diffusion equations in 
exterior domains. It is known that there is a pulsating front connecting 0 and 1 with positive speed in $\mathbb{R}^N$ 
for any direction $e\in\mathbb{S}^{N-1}$. We first prove that there exists an entire solution originating from a pulsating 
front in the exterior domain. Then, we prove that the entire solution propagates completely. Additionally, by constructing 
appropriate super- and sub-solutions, we establish that the entire solution is a transition front connecting 0 and 1, and 
that it is trapped between two translates of the pulsating front as $t\rightarrow +\infty$. Finally, under a suitable 
assumption, we show that the entire solution converges to the same pulsating front as $t\rightarrow 
+\infty$, as well as the uniqueness of such entire solutions.

\textbf{Keywords}: Spatially periodic; combustion; pulsating fronts; entire solutions.
		
\textbf{2020 AMS Subject Classification}: 35C07; 35K57; 35B08.
\end{abstract}

\section{Introduction}

In this paper, we investigate the following spatially periodic reaction-diffusion equation around an obstacle
\begin{equation}\label{f1}
\begin{cases}
u_t=\Delta u+f(x,u),\ \ x\in\Omega,\ t\in\mathbb{R},\\
\frac{\partial u}{\partial n}=0,\ \ x\in\partial\Omega,\ t\in\mathbb{R},
\end{cases}
\end{equation}
where $\Omega=\mathbb{R}^N\setminus K$ ($N\geq2$) is called an exterior domain, and $K\subset\mathbb{R}^N$ is a compact set 
with smooth boundary that acts as an obstacle. Moreover, $n(x)$ represents the unit outward normal to $\partial\Omega$ at 
$x$, and the Neumann boundary condition means that there is no flux of $u$ across $\partial\Omega$. The nonlinear term $f$ 
satisfies the following assumptions:
\begin{description}
\item[{\bf(H1)}] $f$ is a function of class $C^\infty(\mathbb{R}^{N+1})$ and for all $k\in\mathbb{N}$, 
$$\Vert f\Vert_{C^k(\mathbb{R}^{N+1})}=\sum_{i=0}^k\Vert D^if \Vert_{L^\infty(\mathbb{R}^{N+1})}<+\infty.$$
\item[{\bf(H2)}] $f$ is $L$-periodic in $x$, that is, there is a vector $L:=(L_1,\cdots,L_N)$ with $L_i>0(i=1,\cdots,N)$ 
such that for any vector $k:=(k_1,\cdots,k_N)\in\mathbb{Z}^N$, $f(x+k L,u)=f(x,u)$, where $kL=(k_1L_1,\cdots,k_NL_N)$.
\item[{\bf(H3)}] There exists $\theta\in(0,1)$ such that
\begin{equation*}
\begin{cases}
\forall(x,u)\in\mathbb{R}^N\times[0,\theta]\cup\{1\},\ f(x,u)=0,\\
\forall(x,u)\in\mathbb{R}^N\times(\theta,1),\ f(x,u) \geq 0,\\
\forall u\in(\theta,1),\ \exists x\in\mathbb{R}^N,\ \text{s.t.}\ f(x,u)>0.
\end{cases}
\end{equation*}
\item[{\bf(H4)}] There holds $\eta:=\sup_{x\in\mathbb{R}^N}f_u(x,1)<0$.
\end{description}
Based on the above assumptions, it is obvious  that there exist $\eta_1<0$ and $0<\theta_1<\min\{\theta, 1-\theta\}$ such 
that
\begin{equation}\label{eta1theta1}
\sup_{(x,u) \in \mathbb{R}^N\times[1-\theta_1,1+\theta_1]} f_u(x,u)\leq \eta_1.
\end{equation}
Additionally, for mathematical convenience, we assume that
\begin{equation*}
f(x,u)=0\ \ \text{for}\ (x,u)\in\mathbb{R}^N\times(-\infty,0)
\end{equation*}
and denote
\begin{equation}\label{M}
M:=\sup_{(x,u)\in\mathbb{R}^N\times\mathbb{R}}\vert f_u(x,u)\vert.
\end{equation}
The reaction term $f(x,u)$ satisfying (H3)-(H4) is called a combustion nonlinearity, and $\theta$ is called the ignition 
temperature.
	
Let us first recall some previous results concerning a class of spatially periodic traveling wave solutions for problem 
\eqref{f1} in the whole space $\mathbb{R}^N$. In other words, we consider
\begin{equation}\label{f2}
u_t=\Delta u+f(x,u),\ \ x \in \mathbb{R}^N, \ t \in \mathbb{R}.
\end{equation}
The equation is closely linked to numerous branches of science, such as population dynamics and combustion kinetics in 
spatially periodic media, and much progress has been made in analyzing its propagation characteristics. To describe its 
rich dynamics, Shigesada et al. \cite{N1} and Xin \cite{J1,J2,J3} introduced the notion of pulsating front. We formulate 
this concept as follows.
\begin{definition}\label{def1}
A pair $(\Phi_e,c_e)$ with $\Phi_e:\mathbb{R}\times\mathbb{R}^N\rightarrow\mathbb{R}$ and $c_e\in\mathbb{R}$ is called a 
pulsating front connecting 0 and 1 of equation \eqref{f2} with effective speed $c_e$ in the direction 
$e\in\mathbb{S}^{N-1}$, if $(\Phi_e,c_e)$ satisfies	the following conditions: 
\begin{description}
\item[(i)]  the function $u(x,t)=\Phi_e(x\cdot e-c_et,x)$ is an entire (classical) solution of equation \eqref{f2};
\item[(ii)]  $\Phi_e(\xi,x)$ is $L$-periodic in $x$; 
\item[(iii)]  $\Phi_e(\xi,x)$ satisfies 
\begin{equation*}
\lim_{\xi\to+\infty}\Phi_e(\xi,x)=0\ \ \text{and}\ \ \lim_{\xi\to-\infty}\Phi_e(\xi,x)=1\ \ 
\text{uniformly for}\ x\in \mathbb
{R}^N.
\end{equation*}
\end{description}
\end{definition}
	
Due to the environmental heterogeneity, both the profile of the pulsating front and its propagation speed generally vary 
with the propagation direction, which is different from the planar front in homogeneous environments \cite{P1}. 
Berestycki and Hamel  \cite{H3} investigated the existence, uniqueness and monotonicity of the pulsating front for 
equation \eqref{f2} and extended their results to general periodic domains. In fact, under assumptions (H1)-(H4), they 
proved that
\begin{itemize}
\item for any direction $e\in\mathbb{S}^{N-1}$, the function $u(x,t)=\Phi_e(x\cdot e-c_et,x)$ is a pulsating front 
connecting 0 and 1 of equation \eqref{f2} with the speed $c_e$;
\item the speed $c_e$ is unique and positive, and the profile $\Phi_e(\xi,x)$ is unique up to a shift in $\xi$;
\item $\Phi_e(\xi,x)$ is strictly decreasing in $\xi$.
\end{itemize}
Alfaro and Giletti \cite{AG} established that the wave speed and wave profile are continuous with respect to the 
propagation direction $e$ for equation \eqref{f2}. Additionally, Bu and He \cite{Z1} investigated the asymptotic behavior 
of pulsating fronts at infinity as they approach the unstable limiting state, while Lyu, Guo and Wang \cite{J4} 
summarized and refined their results, and further analyzed the asymptotic properties of the derivatives of such fronts. 
For the monostable nonlinearity, Berestycki and Hamel \cite{H3} also showed that for each $e\in\mathbb{S}^{N-1}$, there 
exists a unique minimal speed $\overline{c_e}$ such that a pulsating front of equation \eqref{f2} exists if and only if 
the speed $c\geq\overline{c_e}$. Moreover, one can refer to \cite{buchong8,buchong9} for the asymptotic behavior, 
uniqueness and stability of pulsating fronts. For equation \eqref{f2} with a bistable nonlinearity, the existence and 
nonexistence of pulsating fronts are covered in \cite{buchong3,buchong5,buchong2,buchong24} and references therein. More 
discussions on pulsating fronts and their qualitative properties can be found in \cite{buchong11,buchong12,fzhao,
buchong21,buchong22} and references therein. 
	
Coming back to the propagation phenomena of reaction-diffusion equations in the presence of obstacles, Berestycki, Hamel 
and Matano \cite{H1} systematically studied the propagation phenomena of  bistable reaction-diffusion equations in their 
seminal work. To be more precise, they considered problem \eqref{f1} with $f(x,u)$ replaced by $f(u)$, and proved that 
there exists a unique entire solution $u(x,t)$ originating from the planar front $\Phi(x_1-ct)$ of equation \eqref{f2} 
(with $f(x,u)$ replaced by $f(u)$), namely, it holds 
$$u(x,t)-\Phi(x_1-ct)\rightarrow 0 \ \ \text{as}\ t\rightarrow -\infty\ \ \text{uniformly for}\ x\in\overline{\Omega}.$$ 
They further proved that, as $t\rightarrow+\infty$, the entire solution $u(x,t)$ either exhibits complete propagation in 
the sense that 
$$\lim_{t\rightarrow +\infty}u(x,t)=1\ \ \text{locally uniformly for}\  x\in\overline{\Omega}$$ 
or becomes blocked in the sense that 
$$\lim_{t\rightarrow+\infty}u(x,t)=u_\infty(x)\ \ \text{locally uniformly for}\ x\in\overline{\Omega},$$
where $0<u_\infty(x)<1$ for $x \in \overline{\Omega}$. In particular, when the obstacle $K$ is directionally convex with 
respect to a hyperplane or star-shaped, complete propagation occurs, and the entire solution recovers its original 
profile after passing the obstacle, that is
$$u(x,t)-\Phi(x_1-ct)\rightarrow 0 \ \ \text{as}\ t\rightarrow +\infty \ \ \text{uniformly for}\ x\in\overline{\Omega}.$$
Berestycki, Hamel and Matano  \cite{bhm} recently further considered the propagation phenomena of bistable reaction-
diffusion equations	when the obstacle $K$ is a wall of infinite span with many holes. They gave a clear dichotomy 
between	propagation and  blocking for any type of walls of finite thickness. Namely, the traveling front either passes 
through the wall and propagates toward $x_1=+\infty$ (propagation) or is trapped around the wall (blocking). Guo, Hamel 
and Sheng \cite{ghs} showed that there is a unique global mean speed for any transition front connecting 0 and 1, 
regardless of its shape. See also Jia, Wang and Bu \cite{F2}, Sheng et al. \cite{slww} and Li \cite{L1} for time periodic 
equations, Yan and Sheng \cite{ys,ys1} for combustion equations and reaction-diffusion systems, Hoffman, Hupkes and Van 
Vleck \cite{A1} for lattice differential equations, Brasseur and Coville \cite{bc}, Brasseur et al. \cite{bshv}, Qiao, Li 
and Sun \cite{S1} for nonlocal dispersal equations. Guo and Monobe \cite{H2}, Han et al. \cite{HCWY}, Jia, Bao and Bo 
\cite{JBB}, Yan and Sheng \cite{ys2} investigated the almost $V$-shaped fronts of reaction-diffusion equations/systems 
in exterior domains. 
	
Very recently, Jia, Sheng and Wang \cite{F1} firstly studied the propagation phenomena of spatially periodic bistable 
reaction-diffusion equations around an obstacle when the nonlinearity depends on $x\in\mathbb R^N$. Shi and Li \cite{sal} 
investigated the spatially periodic bistable reaction-diffusion equations in exterior domains with \(f(x,u)=f(x_1,u)\). 
Jia et al. \cite{fff} further studied the propagation phenomena of  spatially periodic combustion reaction-diffusion 
equations in exterior domains  when $f(x,u)=f(x_1,u)$. In particular, they proved that, with the aid of a special 
auxiliary function, there is an entire solution $u(x,t)$ originating from the pulsating front  $\Phi_{e_1}(x_1-c_{e_1}t,
x)$, and that it recovers the same profile as $t\rightarrow +\infty$, that is 
$$u(x,t)\rightarrow \Phi_{e_1}(x_1-c_{e_1}t,x)\ \ \text{as}\ t\rightarrow +\infty \ \ \text{uniformly for}\  x\in\overline
{\Omega}.$$
However, the method fails if the reaction term depends on more than one spatial variable. 
Fortunately, inspired by 
\cite{F2,H1,F1,jjj}, we prove that there is an entire solution originating from a pulsating front. Under an additional 
condition, by constructing suitable super- and sub-solutions, we further show that the entire 
solution recovers the same
pulsating front as $t\rightarrow+\infty$ in exterior domains. We show that the entire solution propagates completely and 
establish that it is a transition front connecting 0 and 1. In particular, it is trapped between two translates of the 
pulsating front as $t\rightarrow+\infty$. Also we prove that the entire solution is unique. Throughout this paper, the 
degeneracy of the combustion equation at equilibrium 0 and spatial periodicity in every direction cause difficulties for 
investigating the long-time behavior of the entire solution to the equation.
	
The rest of the paper is structured as follows. In  Section \ref{sec2}, we present some preliminaries and the main 
results. Section \ref{sec3} is devoted to proving the existence of the entire solution. In Section \ref{sec4}, we analyze 
the complete propagation property of the entire solution and its long-time behavior. In Section \ref{sec5}, we study the 
uniqueness of the entire solution.

\section{Preliminaries and main results}\label{sec2}
	
In this section, we first introduce some preliminaries and then present the main results of this paper. By \cite{Z1,J4}, 
one has the following asymptotic behavior of the pulsating fronts for equation \eqref{f2}.
\begin{lemma}\label{lem1}
Let $(\Phi_e,c_e)$ be the pulsating front for equation \eqref{f2}. Then there exist real numbers $C_1,C_2>0$ such that 
\begin{equation*}
\begin{cases}
\Phi_e(\xi,x)\sim C_1e^{-c_e\xi},\ \partial_\xi\Phi_e(\xi,x)\sim -C_1c_ee^{-c_e\xi}\ \ \text{as}\ \xi \rightarrow +\infty,\\
1-\Phi_e(\xi,x)\sim C_2e^{\tau_e\xi}\phi_{\tau_e}(x),\ \partial_\xi\Phi_e(\xi,x)\sim -C_2\tau_ee^{\tau_e\xi}\phi_{\tau_e}
(x)\ \ \text{as}\ \xi\rightarrow -\infty
\end{cases}
\end{equation*}
uniformly for all $x\in\mathbb{R}^N$, where $\tau_e$ is a positive constant depending on $e$, $0<\phi_{\tau_e}(x)\in C^2
(\mathbb{R}^N)$ is $L$-periodic and $\Vert\phi_{\tau_e}(x)\Vert_{L^\infty(\mathbb{R}^N)}=1$.
\end{lemma}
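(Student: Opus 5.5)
The lemma concerns the two tails of the front, and I would treat them separately. In each case I would pass to the moving frame, linearize at the limiting state, and then use the spectral theory of periodic elliptic operators. Write $u(x,t)=\Phi_e(x\cdot e-c_et,x)$, so that $\Phi=\Phi_e(\xi,x)$ solves
\[
\Delta_x\Phi+2e\cdot\nabla_x\partial_\xi\Phi+\partial_{\xi\xi}\Phi+c_e\partial_\xi\Phi+f(x,\Phi)=0,
\]
with $\Phi$ periodic in $x$.

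\textbf{Tail $\xi\to+\infty$.} By (H3) and the monotonicity and limits of $\Phi_e$ there is $\xi_0$ such that $\Phi_e(\xi,x)\le\theta$ for $\xi\ge\xi_0$. There $f(x,\Phi)=0$, so $\Phi$ solves a linear periodic equation with constant coefficients in $\xi$.

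1. Take the Fourier series in $x$ (with respect to the lattice $L\mathbb{Z}^N$). The zero mode $\bar\Phi(\xi)$ solves $\bar\Phi''+c_e\bar\Phi'=0$. The only bounded solutions decaying to $0$ are $\bar\Phi=C_1e^{-c_e\xi}$, and $C_1>0$ because $\Phi>0$.
2. Each nonzero mode $k$ satisfies $\hat\Phi_k''+(c_e+2ie\cdot k')\hat\Phi_k'-|k'|^2\hat\Phi_k=0$, where $k'=2\pi k/L$. Its decaying root has real part $\le -c_e-\delta$ for some $\delta>0$ uniform in $k\neq0$. Hence those modes are $O(e^{-(c_e+\delta)\xi})$.
3. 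Control the sum over $k$ using the smoothness of $\Phi$ from (H1) and parabolic estimates. This gives $\Phi\sim C_1e^{-c_e\xi}$ uniformly in $x$. The estimate for $\partial_\xi\Phi$ follows by differentiating the series, or by applying the same argument to $\partial_\xi\Phi$.

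\textbf{Tail $\xi\to-\infty$.} Set $w=1-\Phi$. Then
\[
\Delta_xw+2e\cdot\nabla_x\partial_\xi w+\partial_{\xi\xi}w+c_e\partial_\xi w+a(\xi,x)w=0,
\qquad a=\int_0^1f_u(x,1-sw)\,ds.
\]
Here $a\to f_u(x,1)$ exponentially fast, since $w$ decays exponentially by a comparison with (\ref{eta1theta1}).

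1. Look for solutions of the limiting equation of the form $e^{\lambda\xi}\phi(x)$. They exist exactly when $\lambda$ is a root of $k(\lambda)=0$, where $k(\lambda)$ is the principal periodic eigenvalue of
\[
\phi\mapsto\Delta\phi+2\lambda e\cdot\nabla\phi+\bigl(\lambda^2+c_e\lambda+f_u(x,1)\bigr)\phi .
\]
2. The function $k$ is convex in $\lambda$. Moreover $k(0)<0$ by (H4), and $k(\lambda)\to+\infty$ as $\lambda\to\pm\infty$. So there is a unique positive root $\tau_e$, with a positive periodic eigenfunction $\phi_{\tau_e}$, which I normalize by $\|\phi_{\tau_e}\|_{L^\infty}=1$.
3. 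Show that $w e^{-\tau_e\xi}$ converges to $C_2\phi_{\tau_e}$. For this I would use the perturbation $a-f_u(x,1)=O(e^{\epsilon\xi})$ and the spectral gap of the other Floquet modes, via the arguments of \cite{Z1,J4} (Agmon–Douglis–Nirenberg-type asymptotics for elliptic equations in a periodic cylinder).

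\textbf{Main obstacle.} The hardest step is the $-\infty$ tail. There are two difficulties:
\begin{itemize}
\item ruling out that $w$ decays with a faster exponent, which would give $C_2=0$;
\item handling the non-self-adjoint operator created by the mixed term.
\end{itemize}
For the first, the plan is to use $\partial_\xi\Phi<0$ together with a sliding/Harnack argument on $w$ to show that $w\ge ce^{\tau_e\xi}$. In practice I would cite the refined results of \cite{J4}, which give exactly these asymptotics including those of the derivatives, rather than redoing the Floquet analysis.
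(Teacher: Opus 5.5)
Your proposal is correct. It differs from the paper mainly in how much it actually proves: the paper gives no argument for this lemma and simply quotes \cite{Z1,J4}, whereas you give a genuine, self-contained proof of the $+\infty$ tail.

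That part works exactly as you describe. For $\xi\ge\xi_0$ one has $f(x,\Phi_e)=0$, and the zero Fourier mode gives $C_1e^{-c_e\xi}$ with $C_1>0$. For $k\neq0$, the two roots of $\lambda^2+(c_e+2ie\cdot k')\lambda-|k'|^2=0$ have real parts $(-c_e\pm p_k)/2$ with $p_k>c_e$. Boundedness kills the growing root, and the decaying one satisfies $\mathrm{Re}\,\lambda\le-c_e-\delta$ with $\delta>0$ uniform over the dual lattice minus the origin. This uniformity holds because $p_k^2-c_e^2$ is bounded below there and $p_k\to\infty$ as $|k'|\to\infty$. Smoothness of $\Phi_e(\xi_0,\cdot)$ then gives $\Phi_e=C_1e^{-c_e\xi}+O(e^{-(c_e+\delta)\xi})$ uniformly in $x$, and likewise for $\partial_\xi\Phi_e$.

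A side benefit of this mode-by-mode argument is that it needs no maximum principle in the $(\xi,x)$ variables. That matters, because $\Delta_x+2e\cdot\nabla_x\partial_\xi+\partial_{\xi\xi}$ has symbol $-|\omega+\sigma e|^2$ and is only degenerate elliptic. Your characterization of $\tau_e$ as the unique positive zero of the convex function $k$ is also right, with $k(0)\le\eta<0$ and with ``exactly when'' understood for positive $\phi$.

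For the $-\infty$ tail, however, you end up where the paper does, relying on \cite{J4}. Your fallback sketch would not by itself deliver $w\ge ce^{\tau_e\xi}$. A Harnack inequality for the positive solution $1-u$ of a uniformly parabolic equation in the $(x,t)$ variables gives only $w\ge ce^{\Lambda\xi}$ for some non-sharp $\Lambda$; that rules out super-exponential decay but no more. The sharp exponent together with $C_2>0$ needs a finer argument. One option is an expansion of $w$ in the modes of the limiting operator, combined with the positivity of $w$, which forces the leading term to be the principal mode. Since you cite \cite{J4} for this step anyway, your argument is complete on the same footing as the paper's.
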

\begin{lemma}\label{lem2}
Let $(\Phi_e,c_e)$ be the pulsating front for equation \eqref{f2}. Then it holds that
$$\lim_{\xi\to +\infty}\frac{\vert\nabla_x\Phi_e(\xi,x)\vert}{\Phi_e(\xi,x)}=0 \ \ \text{uniformly for all}\ x\in\mathbb
{R}^N.$$
\end{lemma}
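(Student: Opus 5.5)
The plan is to exploit the fact that, far ahead of the front, $\Phi_e$ stays below the ignition temperature. There the equation is linear, and the sharp asymptotics of Lemma \ref{lem1}, combined with interior parabolic estimates, control the $x$-gradient. Write $u(x,t)=\Phi_e(x\cdot e-c_et,x)$ and $s=x\cdot e-c_et$. By Definition \ref{def1}(iii) there is $\xi_0$ with $\Phi_e(\xi,x)\le\theta$ for all $\xi\ge\xi_0$ and all $x$. Hence, by (H3), $u_t=\Delta u$ in the moving half-space $\{(x,t):s\ge\xi_0\}$.

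First I will remove the exponential decay. Set $v(x,t):=e^{c_e s}u(x,t)$. A direct computation gives $u_t=e^{-c_es}(c_e^2v+v_t)$ and $\Delta u=e^{-c_es}(\Delta v-2c_e e\cdot\nabla v+c_e^2v)$. Therefore
\begin{equation*}
v_t=\Delta v-2c_e\,e\cdot\nabla v\quad\text{in }\{s\ge\xi_0\},
\end{equation*}
which is a uniformly parabolic equation with constant coefficients. By Lemma \ref{lem1}, $v(x,t)=e^{c_es}\Phi_e(s,x)\to C_1$ as $s\to+\infty$, uniformly in $x$. The function $w:=v-C_1$ solves the same equation and satisfies $\sup\{|w(x,t)|:s\ge R\}=:\varepsilon(R)\to0$ as $R\to+\infty$.

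Next I will apply interior parabolic (Schauder or $W^{2,1}_p$) estimates on the cylinders $Q(x_0,t_0)=B_1(x_0)\times(t_0-1,t_0]$. If $s_0=x_0\cdot e-c_et_0$, then every point of $Q(x_0,t_0)$ has $s\ge s_0-1-c_e$. So for $s_0\ge\xi_0+2+c_e$ the cylinder lies in the linear region, and the estimate gives
\begin{equation*}
|\nabla v(x_0,t_0)|=|\nabla w(x_0,t_0)|\le C\,\varepsilon(s_0-1-c_e).
\end{equation*}
The constant $C$ depends only on $N$ and $c_e$, and the right-hand side tends to $0$ as $s_0\to+\infty$.

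Finally I will translate back to $\Phi_e$. The chain rule gives $\nabla u=e\,\partial_\xi\Phi_e+\nabla_x\Phi_e$, and also $\nabla u=e^{-c_es}(\nabla v-c_e e\,v)$. Hence
\begin{equation*}
\nabla_x\Phi_e(s,x)=e^{-c_es}\bigl[\nabla v-c_e e\,(v-C_1)\bigr]-e\bigl(\partial_\xi\Phi_e(s,x)+c_eC_1e^{-c_es}\bigr).
\end{equation*}
The bracket is $o(1)$ by the previous step together with $v\to C_1$. The last term is $o(e^{-c_es})$ by the asymptotics of $\partial_\xi\Phi_e$ in Lemma \ref{lem1}. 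Dividing by $\Phi_e(s,x)\sim C_1e^{-c_es}$ with $C_1>0$, and noting that every $(s,x)$ with $s$ large is attained by a suitable choice of $t$, yields the claim uniformly in $x$.

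The main obstacle is making the gradient bound uniform. This needs the convergence $v\to C_1$ to hold uniformly in $x$ with an $x$-independent limit $C_1$, which is exactly what Lemma \ref{lem1} provides. It also needs the parabolic cylinders to stay inside the region where $f\equiv0$, which is handled by the margin $2+c_e$ above. Everything else is routine.
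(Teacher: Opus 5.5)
Your argument is correct. It differs from the paper mainly in that the paper gives no proof here: Lemmas \ref{lem1}--\ref{lem3} are simply imported from \cite{Z1,J4}.

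Your route shows that Lemma \ref{lem2} actually follows from Lemma \ref{lem1} alone. Ahead of the front the equation is the pure heat equation. The renormalization $v=e^{c_es}u$ turns it into a constant-coefficient drift--diffusion equation, and interior parabolic estimates upgrade the uniform convergence $v\to C_1$ to $\nabla v\to 0$.

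The essential input, which you correctly isolate, is that the leading coefficient $C_1$ in Lemma \ref{lem1} is independent of $x$. This reflects the plain Laplacian: with heterogeneous diffusion or advection the leading term would in general carry a non-constant periodic factor $\psi(x)$, and the ratio would tend to $|\nabla\psi|/\psi$ rather than $0$.

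The citation gives the paper the whole package (both ends, plus the derivative bounds of Lemma \ref{lem3}) at no cost. Your argument gives a short self-contained derivation of this particular statement.

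Two minor remarks. First, you do not need the $\partial_\xi\Phi_e$ asymptotics from Lemma \ref{lem1}. Since $\partial_\xi\Phi_e=-u_t/c_e$, one gets directly
\begin{equation*}
\nabla_x\Phi_e(s,x)=e^{-c_es}\bigl(\nabla w+c_e^{-1}e\,w_t\bigr),
\end{equation*}
and the same interior estimates control $w_t$. Second, points of $B_1(x_0)\times(t_0-1,t_0]$ actually satisfy $s\ge s_0-1$, because going back in time increases $s$. Your margin $s_0-1-c_e$ is therefore just conservative, not wrong.
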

\begin{lemma}\label{lem3}
Let $(\Phi_e,c_e)$ be the pulsating front for equation \eqref{f2}. Then there exist real  numbers $A,\kappa_1,\kappa_2>0$ 
dependent on the direction $e$ such that
$$\vert\Phi_e(\xi,x)\vert,\vert D\Phi_e(\xi,x)\vert\leq Ae^{-\kappa_1\xi}\ \ \text{for all}\ (\xi,x)\in[0,+\infty) \times 
\mathbb{R}^N,$$	
$$\vert 1-\Phi_e(\xi,x)\vert,\vert D\Phi_e(\xi,x)\vert\leq Ae^{\kappa_2\xi}\ \ \text{for all}\ (\xi,x)\in(-\infty,0] 
\times\mathbb{R}^N,$$	
where $D$ denotes any first-order derivative with respect to $(\xi,x)\in\mathbb{R}\times\mathbb{R}^N$.
\end{lemma}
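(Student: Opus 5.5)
The plan is to deduce Lemma \ref{lem3} directly from the sharp asymptotics in Lemma \ref{lem1} and Lemma \ref{lem2}, together with standard parabolic regularity and the periodicity of $\Phi_e$ in $x$. We treat the estimates on $[0,+\infty)\times\mathbb{R}^N$ and on $(-\infty,0]\times\mathbb{R}^N$ separately. In each regime we split $\xi$ into a bounded part and a tail. On the bounded part the estimates are trivial once all quantities are shown to be bounded. On the tail we use the exponential asymptotics.

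\textbf{Boundedness.} We first note that $0<\Phi_e<1$. We then observe that $u(x,t)=\Phi_e(x\cdot e-c_et,x)$ is a bounded entire solution of \eqref{f2} with $f$ smooth by (H1). Interior parabolic Schauder estimates give uniform bounds on $\nabla_x u$ and $u_t$ over $\mathbb{R}^N\times\mathbb{R}$. Since $c_e>0$, we can recover derivatives of $\Phi_e$ from $u$:
$$\partial_\xi\Phi_e=-\frac{u_t}{c_e},\qquad \nabla_x\Phi_e=\nabla_xu-e\,\partial_\xi\Phi_e,$$
with both evaluated at the point $t=(x\cdot e-\xi)/c_e$. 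Hence $\Vert D\Phi_e\Vert_{L^\infty}<+\infty$. This settles any compact range $\xi\in[-R,R]$: there we take $A\ge \sup|D\Phi_e|\,e^{\kappa R}$, with $\kappa$ the relevant exponent.

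\textbf{Tail $\xi\to+\infty$.} Lemma \ref{lem1} gives $\Phi_e\le 2C_1e^{-c_e\xi}$ and $|\partial_\xi\Phi_e|\le 2C_1c_ee^{-c_e\xi}$ for $\xi\ge R$, uniformly in $x$. Lemma \ref{lem2} gives $|\nabla_x\Phi_e|\le \Phi_e\le 2C_1e^{-c_e\xi}$ for $\xi\ge R$ with $R$ large. So we may take $\kappa_1=c_e$ (or any smaller positive number).

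\textbf{Tail $\xi\to-\infty$.} Using $0<\phi_{\tau_e}\le1$, Lemma \ref{lem1} yields the bounds on $1-\Phi_e$ and $\partial_\xi\Phi_e$ with $\kappa_2=\tau_e$. The main obstacle is $\nabla_x\Phi_e$, since Lemma \ref{lem1} does not control it directly. To handle it, we set $w=1-u$. This function satisfies a linear equation $w_t=\Delta w+b(x,t)w$ with bounded $b$, where $b=-\int_0^1f_u(x,1-sw)\,ds$. We apply interior parabolic (Harnack/Schauder-type) gradient estimates to $w$ on unit parabolic cylinders: $|\nabla_x w(x,t)|$ is bounded by $C\sup w$ over a cylinder of fixed size around $(x,t)$. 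In that cylinder $\xi$ changes by at most a bounded amount $\delta$. So the supremum of $w$ is at most $2C_2e^{\tau_e(\xi+\delta)}$, which is of the desired form. Then $\nabla_x\Phi_e=-\nabla_xw-e\,\partial_\xi\Phi_e$ gives the bound on $\nabla_x\Phi_e$. The same cylinder argument applied to $u$ itself could replace Lemma \ref{lem2} near $+\infty$ if needed.

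Finally, we take $A$ to be the maximum of all the constants obtained above, which proves both inequalities.
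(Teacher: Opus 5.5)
Your argument is correct, but it does not follow the paper, which gives no proof of Lemma \ref{lem3} and simply quotes it, together with Lemmas \ref{lem1} and \ref{lem2}, from \cite{Z1,J4}. You instead obtain it as a corollary of the sharp asymptotics in Lemma \ref{lem1} plus uniform interior parabolic estimates for $u(x,t)=\Phi_e(x\cdot e-c_et,x)$. You use $c_e>0$ to convert $u_t$ and $\nabla_x u$ back into $\partial_\xi\Phi_e$ and $\nabla_x\Phi_e$. The one non-trivial point, $\nabla_x\Phi_e$ as $\xi\to-\infty$, is handled correctly by the local estimate $|\nabla_x w|\le C\sup_Q|w|$ for the linear equation satisfied by $w=1-u$, on cylinders where $\xi$ varies by at most $1+c_e$. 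There is a small slip: $b=\int_0^1 f_u(x,1-sw)\,ds$ carries no minus sign, which is irrelevant since only $\Vert b\Vert_\infty$ enters.

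Your route yields the explicit rates $\kappa_1=c_e$ and $\kappa_2=\tau_e$. As you observe, it also makes Lemma \ref{lem2} unnecessary, because $u$ itself solves $u_t=\Delta u+b_0u$ with bounded $b_0=\int_0^1 f_u(x,su)\,ds$. The cost is that you deduce a crude bound from a much finer result. Such exponential bounds are usually proved first and then feed into the sharp asymptotics. One compares $u$ near $0$ (where $f\equiv 0$) with $Ae^{-\kappa(x\cdot e-c_et)}$ for $0<\kappa\le c_e$, and $1-u$ near $1$ (where $f_u\le\eta_1<0$) with $Ae^{\kappa(x\cdot e-c_et)}$ for $\kappa$ small.

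So your proof is valid within this paper, where Lemma \ref{lem1} is taken as known, but it is not independent of that lemma. A self-contained version would replace the appeal to Lemma \ref{lem1} by this comparison step and keep your cylinder argument for the derivative bounds.
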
	
	
To streamline the discussion, we take $e=e_1=(1,0,\cdots,0)$ without loss of generality. Then $\xi=x_1-c_{e_1}t$, where 
$x_1$ is the first component of $x$. Note that if $(\Phi_{e_1},c_{e_1})$ is a pulsating front connecting 0 and 1 for 
equation \eqref{f2} in the direction $e_1$, then it holds that
\begin{equation}\label{f3}
-c_{e_1}\partial_\xi\Phi_{e_1}-\partial_{\xi\xi}\Phi_{e_1}-\Delta_x\Phi_{e_1}-2\partial_{x_1}\partial_\xi\Phi_{e_1}=f(x, 
\Phi_{e_1})\ \ \text{for all}\ (\xi,x)\in\mathbb{R}\times\mathbb{R}^N.
\end{equation}
Since $\Phi_{e_1}(\xi,x)$ is strictly decreasing in $\xi$, one gets that there exist real numbers $C,M_0>0$ such that
\begin{equation}\label{CM0}
\begin{cases}
\Phi_{e_1}(\xi,x)\leq \frac{1}{2}\theta_1,\ \ \text{when}\ \xi\geq C,\\
\Phi_{e_1}(\xi,x)\geq 1-\frac{1}{2}\theta_1,\ \ \text{when}\ \xi\leq -C,\\
\partial_\xi\Phi_{e_1}(\xi,x)\leq -M_0\ \ \text{when}\ -C\leq\xi\leq C,
\end{cases}
\ \ \ \ \text{uniformly for}\ x\in \mathbb{R}^N.
\end{equation}
It can be inferred from Lemmas \ref{lem1}-\ref{lem3} that
\begin{equation}\label{ddd}
\begin{cases}
\lim_{\xi \to +\infty} \frac{\partial_\xi\Phi_{e_1}(\xi,x)}{\Phi_{e_1}(\xi,x)}=-c_{e_1},  \ \lim_{\xi \to -\infty} \frac{
\partial_\xi\Phi_{e_1}(\xi,x)}{\Phi_{e_1}(\xi,x)}=0,\\
\lim_{\xi \to +\infty} \frac{\vert\nabla_x\Phi_{e_1}(\xi,x)\vert}{\Phi_{e_1}(\xi,x)}=0,\ \lim_{\xi \to -\infty} \frac{
\vert\nabla_x\Phi_{e_1}(\xi,x)\vert}{\Phi_{e_1}(\xi,x)}=0
\end{cases}
\ \ \ \ \text{uniformly for}\ x\in \mathbb{R}^N.
\end{equation}
Define
\begin{equation}\label{M1M2}
\begin{split}
M_1:=\left\Vert\frac{\partial_\xi\Phi_{e_1}(\xi,x)}{\Phi_{e_1}(\xi,x)}\right\Vert_{L^\infty(\mathbb{R}^{N+1})}<+\infty\ 
\ \text{and}\ \ M_2:=\left\Vert\frac{\vert\nabla_x\Phi_{e_1}(\xi,x)\vert}{\Phi_{e_1}(\xi,x)}\right\Vert_{L^\infty(\mathbb
{R}^{N+1})}<+\infty.
\end{split}
\end{equation}
Moreover, by \eqref{ddd} there exist $C_3>C$ and $M_3>0$ such that
\begin{equation}\label{C3M3}
\begin{cases}
\frac{c_{e_1}}{2}\leq\frac{\vert\partial_\xi\Phi_{e_1}(\xi,x)\vert}{\Phi_{e_1}(\xi,x)}\leq \frac{3c_{e_1}}{2},\ \text
{when}\ \xi\geq C_3,\\
\frac{\vert\nabla_x\Phi_{e_1}(\xi,x)\vert}{\Phi_{e_1}(\xi,x)}\leq\frac{c_{e_1}}{16},\ \text{when}\ \xi\geq C_3,\\
\partial_\xi\Phi_{e_1}(\xi,x)\leq-M_3,\ \text{when}\ -C_3\leq\xi\leq C_3
\end{cases}
\ \ \ \ \text{uniformly for}\ x\in \mathbb{R}^N. 
\end{equation}
Besides, Lemma \ref{lem3} also implies that there exist real numbers $A_1, \alpha_1, \alpha_2>0$ such that
\begin{equation}\label{A1}
\begin{split}
\Phi_{e_1}(\xi,x)&\leq A_1e^{-\alpha_1\xi}\ \ \text{for all}\ (\xi,x)\in[0,+\infty) \times \mathbb{R}^N,\\
\vert\partial_\xi\Phi_{e_1}(\xi,x)\vert,\vert \nabla_x&\Phi_{e_1}(\xi,x)\vert\leq A_1e^{\alpha_2\xi}\ \  \text{for all}\ 
(\xi,x)\in(-\infty,0] \times \mathbb{R}^N.
\end{split}
\end{equation}
	
Now,  let us give our main results. We first show that there is an entire solution of problem \eqref{f1} originating from 
the pulsating front $\Phi_{e_1}(x_1-c_{e_1}t,x)$.
\begin{theorem}\label{th2}
There exists an entire solution $u(x,t)$ of problem \eqref{f1} satisfying
\begin{equation}\label{uxt1}
\lim_{t\to -\infty}\frac{u(x,t)-\Phi_{e_1}(x_1-c_{e_1}t,x)}{\Phi_{e_1}^\beta(x_1-c_{e_1}t,x)}=0 \ \ \text{uniformly for}\ 
x \in \overline{\Omega} \ \text{and}\  \beta \in \left[0,\beta_0\right],
\end{equation}
where $0<\beta_0<1$ is a constant. Moreover, $0< u(x,t) <1 $ and $u_t(x,t)>0$ for all $x \in \overline{\Omega}$ and $t 
\in \mathbb{R}$.
\end{theorem}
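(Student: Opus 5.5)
We follow the Berestycki--Hamel--Matano strategy \cite{H1}, adapted to periodic media as in \cite{F1,fff}. We build a sub-solution $w^-$ and a super-solution $w^+$ of \eqref{f1}, defined for $t\le T$ with $T\ll -1$, such that $w^-\le w^+$ and both are close to $\Phi_{e_1}(x_1-c_{e_1}t,x)$ relative to $\Phi_{e_1}^{\beta}$. We then let $u_n$ solve \eqref{f1} for $t\ge -n$ with $u_n(\cdot,-n)=w^-(\cdot,-n)$ and take the limit $n\to\infty$.

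To handle the obstacle, we fix a smooth cut-off $\zeta(x)$ with $\zeta=0$ in a neighbourhood of $K$, $\zeta=1$ for $|x|\ge R$, $0\le\zeta\le1$, and $\partial_n\zeta=0$ on $\partial\Omega$. Write $\xi=x_1-c_{e_1}t$. The candidates are
\[
w^\pm(x,t)=\Phi_{e_1}\bigl(\xi\mp \delta(t),x\bigr)\pm q(t)\,\Phi_{e_1}^{\beta_0}\bigl(\xi\mp\delta(t),x\bigr),
\]
with the sub-solution further modified as $w^-=\max\{\zeta(x)\,(\cdots),0\}$ or $\max\{\Phi_{e_1}(\xi+\delta(t),x)-q(t)\Phi_{e_1}^{\beta_0}(\xi+\delta(t),x),0\}$. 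Here $q(t)=e^{\mu c_{e_1} t}$ and $\delta(t)=\sigma e^{\mu c_{e_1}t}$ for small $\mu>0$ and large $\sigma$. Since $K$ lies in $\{|x_1|\le R\}$ and $t\to-\infty$, we have $\xi\ge c_{e_1}|t|/2$ near $K$. There $\Phi_{e_1}\lesssim e^{-c_{e_1}\xi}$ by Lemma \ref{lem1}, and $\Phi_{e_1}\le\theta$, so $f=0$.

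\textbf{Step 1 (verify the differential inequalities).} Use \eqref{f3} and Taylor expansion of $f$. On $\xi\ge C_3$, $f=0$ for the relevant values, so only linear terms remain. Here \eqref{C3M3} gives $|\nabla_x\Phi|/\Phi\le c_{e_1}/16$ and $\partial_\xi\Phi\approx -c_{e_1}\Phi$. Hence, with $\beta_0$ small,
\[
(\partial_t-\Delta)\Phi^{\beta_0}\approx \beta_0 c_{e_1}^2(1-\beta_0)\Phi^{\beta_0}>0 .
\]
This is the key reason $\beta_0<1$ is needed, and it lets the term $q\Phi^{\beta_0}$ dominate the $O(q')$ errors. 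On $\xi\le -C_3$, \eqref{eta1theta1} gives the factor $\eta_1<0$, which absorbs the errors. On $|\xi|\le C_3$, the monotonicity $\partial_\xi\Phi\le -M_3$ combined with $\delta'\sim\sigma q'$ large absorbs the $O(Mq)$ reaction error. The Neumann condition on $\partial K$ must also be checked. Near $K$ the solution is exponentially small, and the normal derivative of $\Phi(\xi,x)$ is controlled by $M_1,M_2$ times $\Phi$. For the super-solution we need $\partial_n w^+\ge0$. This is delicate, since $\Phi$ itself has an arbitrary normal derivative there. We plan to add a correction $e^{\mu c_{e_1}t}\,\psi(x)\,\Phi$, with $\psi$ a bounded function whose normal derivative on $\partial K$ is large and positive, and compensate its Laplacian using the smallness factor. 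Alternatively, following \cite{H1}, we use the fact that $\Phi_{e_1}$ is exponentially small there and add $C e^{-c_{e_1}|t|\gamma}$-type terms. I expect this boundary step, together with the interplay with $\nabla_x\Phi$ (the non-homogeneous feature), to be the main obstacle. Lemma \ref{lem2} and \eqref{C3M3} are exactly what make the $\nabla_x$ cross terms $2\nabla_x\Phi\cdot\nabla(\cdot)$ small relative to $\Phi$.

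\textbf{Step 2 (limit).} The comparison principle gives $w^-\le u_n\le w^+$ on $[-n,T]$. We also have $u_n$ increasing in $t$, because $w^-$ is a sub-solution and so $u_n(\cdot,-n+s)\ge w^-(\cdot,-n+s)$ lets comparison propagate, and increasing in $n$. Parabolic estimates then yield $u_n\to u$ locally uniformly, and $u$ is an entire solution with $w^-\le u\le w^+$ for $t\le T$.

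\textbf{Step 3 (the limit \eqref{uxt1}).} From $|w^\pm-\Phi_{e_1}(\xi,x)|\le C(q+\delta)\Phi^{\beta_0}$, which uses $|\partial_\xi\Phi|\le M_1\Phi$, we get
\[
\frac{|u-\Phi|}{\Phi^{\beta}}\le C e^{\mu c_{e_1}t}\Phi^{\beta_0-\beta}\le Ce^{\mu c_{e_1}t}\to0
\]
uniformly for $\beta\le\beta_0$. For the last statement, $0\le u\le1$ holds because $0,1$ solve \eqref{f1}. The bounds $u_t\ge0$ follow from the monotonicity of $u_n$. The strong maximum principle (with Hopf's lemma at $\partial\Omega$) applied to $u$ and to $u_t$, which solves a linear equation with Neumann condition, gives $0<u<1$ and $u_t>0$, since $u$ is nonconstant.
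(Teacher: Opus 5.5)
The genuine gap is the Neumann condition for the super-solution. You leave it open, and neither suggested fix works. For your $w^+$ one has on $\partial\Omega$ that $\partial_n w^+=(1+q\beta_0\Phi_{e_1}^{\beta_0-1})(\partial_\xi\Phi_{e_1}n_1+\nabla_x\Phi_{e_1}\cdot n)$. Near $K$, as $t\to-\infty$, $\partial_\xi\Phi_{e_1}/\Phi_{e_1}\to-c_{e_1}$ and $|\nabla_x\Phi_{e_1}|/\Phi_{e_1}\to0$, so $\partial_n w^+\approx-c_{e_1}n_1(\Phi_{e_1}+q\beta_0\Phi_{e_1}^{\beta_0})$. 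This is negative wherever $n_1>0$, for instance at the leftmost point of $K$, where $n=e_1$. The dominant bad term is $q\beta_0\Phi_{e_1}^{\beta_0}$, not $\Phi_{e_1}$: near $K$ one has $\Phi_{e_1}^{1-\beta_0}\le Ce^{c_{e_1}\alpha_1(1-\beta_0)t}\ll q(t)$ once $\mu<\alpha_1(1-\beta_0)$. A correction $e^{\mu c_{e_1}t}\psi(x)\Phi_{e_1}$ with $\psi$ bounded adds only $q\,\partial_n\psi\,\Phi_{e_1}$ to the normal derivative. That is smaller than even the $O(\Phi_{e_1})$ term by the factor $q\to0$. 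A spatially constant term like $Ce^{-\gamma|t|}$ has zero normal derivative. What you need is a correction whose normal derivative on $\partial\Omega$ is positive and of order $q\Phi_{e_1}^{\beta_0}$, while its interior error terms stay dominated. Without it you have no super-solution, hence no bound $u_n\le w^+$ and no proof of \eqref{uxt1}.

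This is exactly what Lemma~\ref{lem5} supplies. The perturbation is $\zeta\Phi_{e_1}^{\beta_0}(\xi,x)\delta(x)e^{\frac12c_{e_1}\alpha_1t}$ with $\delta=\delta_0+C_\delta$ and $\nabla\delta_0\cdot n=1$ on $\partial\Omega$, so the weight contributes $+\zeta\Phi_{e_1}^{\beta_0}e^{\frac12c_{e_1}\alpha_1t}$ to the normal derivative. The $\beta_0$-terms are absorbed by imposing $\beta_0\le 1/(8c_{e_1}\Vert\delta\Vert_{L^\infty})$. The $O(\Phi_{e_1})=O(\Phi_{e_1}^{1-\beta_0})\Phi_{e_1}^{\beta_0}$ term from the front is absorbed because $\Phi_{e_1}^{1-\beta_0}\le Ce^{c_{e_1}\alpha_1(1-\beta_0)t}\le Ce^{\frac12c_{e_1}\alpha_1t}$ on $\partial\Omega$; this is where $\beta_0\le\frac12$ is tied to the time rate. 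In the interior, the new terms $\Phi_{e_1}^{\beta_0}\Delta\delta$ and $\beta_0\Phi_{e_1}^{\beta_0-1}\nabla\Phi_{e_1}\cdot\nabla\delta$ are absorbed by the $q'$-term. This uses the bounds \eqref{delta} on $|\nabla\delta|/\delta$ and $|\Delta\delta|/\delta$, obtained by taking $C_\delta$ large, not from the factor $e^{\mu c_{e_1}t}$.

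Your sub-solution side is fine: with $\mu<\alpha_1(1-\beta_0)$ the $\max$ with $0$ vanishes near $K$ for $t\ll0$, so no cut-off is needed. Starting the approximation from $w^-$ is also a good way to get $u_t\ge0$ directly; the paper instead starts from $u^+$ and refers to \cite{F1} for $u_t>0$. However, your justification of time monotonicity is wrong. $w^-$ is not nondecreasing in $t$: where $\xi\ll0$, $w^-\approx1-q(t)$ decreases. The correct reason is that $w^-(\cdot,-n)$ is a stationary sub-solution, $\Delta w^-+f(x,w^-)\ge0$. This follows from $\Delta\Phi_{e_1}+f(x,\Phi_{e_1})=-c_{e_1}\partial_\xi\Phi_{e_1}>0$ (by \eqref{f3}) together with the same three-region analysis.
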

	
Then we prove that the entire solution $u(x,t)$ given in Theorem  \ref{th2} propagates completely.
\begin{theorem}\label{th3}
Let the entire solution $u(x,t)$ be given by Theorem \ref{th2}. Then there holds
\begin{equation*}
\lim_{t \rightarrow +\infty}u(x,t)=1 \ \ \text{locally uniformly for}\  x \in \overline{\Omega}.
\end{equation*}
\end{theorem}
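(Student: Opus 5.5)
Since $u_t>0$ and $0<u<1$ by Theorem~\ref{th2}, the limit $u_\infty(x):=\lim_{t\to+\infty}u(x,t)$ exists for every $x\in\overline{\Omega}$, and $0<u_\infty\le 1$. Standard parabolic estimates up to the boundary, using (H1) and the smoothness of $\partial K$, give convergence in $C^2_{loc}(\overline{\Omega})$. Moreover $u_\infty$ is a stationary solution of
\[
\Delta u_\infty+f(x,u_\infty)=0 \ \text{in}\ \Omega,\qquad \partial_n u_\infty=0 \ \text{on}\ \partial\Omega .
\]
The plan is to prove $u_\infty\equiv 1$ in two steps. First, $u_\infty$ is close to $1$ far away from the obstacle. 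Second, a Liouville-type argument, exploiting that $f$ is of combustion type (so $f\ge 0$), removes the remaining possibility near $K$.

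\textbf{Step 1: $u_\infty\to1$ at infinity.} Fix $R_0$ with $K\subset B_{R_0}$. Since $K$ is compact, the region $\{x_1\le -R_0\}$ does not meet $K$. By \eqref{uxt1} and the limits in Definition~\ref{def1}, for $t_0$ very negative $u(\cdot,t_0)$ exceeds $1-\theta_1/2$ on large sets, for instance on $\{x_1\le -R_0\}$ roughly, once $c_{e_1}t_0$ is very negative. Using $u_t>0$, we get $u(x,t)\ge u(x,t_0)$ for $t\ge t_0$. Hence $u_\infty\ge 1-\theta_1/2$ on a half-space $\{x_1\le -R_1\}$.

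To extend this, place large balls $B_R(y)\subset\Omega$ with $\operatorname{dist}(B_R(y),K)$ large, and compare with the whole-space solution started from a compactly supported datum. Because $f$ is combustion type with ignition $\theta$ and speed $c_e>0$ in all directions, a datum above $\theta+\delta$ on a large enough ball spreads in $\mathbb{R}^N$. This is where I would use the periodic pulsating fronts in every direction to build a radially expanding subsolution, or simply invoke the known spreading result for compactly supported data in periodic media.

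In the exterior domain I would use instead the following sliding argument. Since $u_\infty\ge 1-\theta_1/2$ on the half-space and $u_\infty$ is a stationary supersolution-type limit, compare $u_\infty$ with translates $\Phi_{e}(x\cdot e - c_e s,x)$ of fronts in directions $e$ with $e\cdot e_1<0$, restricted to regions avoiding $K$. More simply, compare it with the solution $w$ of the Neumann problem on $\Omega$ with initial datum $u_\infty$ itself, which is stationary. A front moving in direction $e_1$ lying below $u_\infty$ initially stays below while it is away from $K$. I would handle the region near $K$ by using that the Neumann boundary acts as a reflecting wall, truncating to a subsolution with $\max$ of $0$.

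The expected outcome of Step 1 is that $u_\infty(x)\to 1$ as $|x|\to\infty$. The key ingredient is that $u_\infty\ge1-\theta_1/2$ holds for $x_1$ sufficiently negative at all times, together with a lateral spreading argument in $\mathbb{R}^N\setminus B_{R}$.

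\textbf{Step 2: $u_\infty\equiv1$.} Multiply the equation by $1$ and integrate over $\Omega\cap B_\rho$. The Neumann condition kills the boundary term on $\partial K$, so
\[
\int_{\Omega\cap B_\rho}f(x,u_\infty)\,dx=-\int_{\partial B_\rho}\partial_\nu u_\infty\,d\sigma .
\]
Since $u_\infty\to 1$ at infinity and (H4) holds, $1-u_\infty$ decays exponentially together with its gradient, because it solves a linear equation with coefficient at most $\eta_1<0$ near infinity. Hence the right-hand side tends to $0$ as $\rho\to\infty$, and so $\int_\Omega f(x,u_\infty)=0$.

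As $f\ge0$, this forces $f(x,u_\infty(x))=0$ everywhere. Then $u_\infty$ is harmonic with Neumann boundary condition and tends to $1$ at infinity. The maximum principle, with Hopf's lemma on $\partial K$, gives $u_\infty\equiv 1$, and Dini's theorem upgrades the pointwise limit to locally uniform convergence.

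The main obstacle is Step 1. Specifically, one must show that the level $1-\theta_1/2$ spreads around the obstacle into the shadow region $x_1\gg1$, in a spatially periodic, non-radial medium. I would first try to get it via a compactly supported subsolution built from a finite union of translated pulsating fronts in the cone of directions, placed so that its support avoids $K$.
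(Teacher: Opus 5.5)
Your Step~2 is correct, though longer than necessary. Since $f\ge0$ on $[0,1]$, $u_\infty$ is already superharmonic. So once $u_\infty\to1$ at infinity, a minimum below $1$ would be attained, and the strong minimum principle (or Hopf's lemma on $\partial\Omega$) contradicts the Neumann condition. This is the paper's argument; no flux identity or exponential decay is needed.

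The genuine gap is Step~1. You flag it yourself as the crux but do not carry it out, and the mechanisms you sketch would fail as stated.
\begin{itemize}
\item Pulsating fronts $\Phi_e(x\cdot e-c_es,x)$ are positive on all of $\mathbb{R}^N$, and $\partial_n\Phi_e$ has no sign on $\partial K$. So they are not subsolutions of \eqref{f1} in $\Omega$, and you cannot place one below $u_\infty$ ``away from $K$'' and push it past the obstacle. Saying that ``the Neumann boundary acts as a reflecting wall'' is a heuristic, not a comparison argument. Besides, fronts with $e\cdot e_1<0$ are close to $1$ on the side $x_1\gg1$, exactly where nothing is known about $u_\infty$.
\item A maximum of finitely many translated fronts is never compactly supported, and a minimum of them is not a subsolution.
\item The whole-space spreading result for compactly supported data does not transfer to the Neumann problem in $\Omega$. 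The whole-space solution is instantly positive everywhere and need not satisfy $\partial_n\le0$ on $\partial K$.
\end{itemize}

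What is missing is a \emph{localized} subsolution that never sees the obstacle. The paper takes a homogeneous bistable minorant $g\le f$ as in \eqref{g}. Its existence uses (H4), which gives $f(x,u)\ge|\eta_1|(1-u)$ uniformly in $x$ for $u\in[1-\theta_1,1]$; (H3) alone only gives pointwise positivity. It then uses Lemma~\ref{lem4}: the $g$-solution starting from $1-\epsilon$ on $B(x_0,R_1)$ is $\ge1-\epsilon$ on $B(x_0,R_2)$ at time $T_g$ whenever $B(x_0,R_3)\subset\Omega$. Here $R_1<R_2<R_3$ and $T_g$ are independent of $x_0$, because the underlying comparison function lives in $B(x_0,R_3)$.

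One starts from the half-space where $u(\cdot,\widehat T)\ge1-\epsilon$ and chains balls with $B(x^{i+1},R_1)\subset B(x^i,R_2)$ along a path in $\{|x|>r+R_3\}$, which is connected since $N\ge2$. This gives $u(y,\widehat T+k_yT_g)\ge1-\epsilon$ for every $y$ outside a fixed ball, hence $u_\infty\ge1-\epsilon$ there by $u_t>0$.

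A stationary variant would also work: slide a compactly supported Dirichlet bump for $g$, with maximum close to $1$, from $x_1\ll-1$ around $K$, using the strong maximum principle. Either way, the object you need is a compactly supported subsolution, not a front.
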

	
Next, we show that the entire solution $u(x,t)$ is a transition front connecting $0$ and $1$ with global mean speed 
$c_{e_1}$. To this end, let us first recall the definitions of transition fronts and their global mean speed. The 
terminology comes from the pioneering works of Berestycki and Hamel \cite{bh1,jjj}. See also Shen  \cite{shen1} for the 
one-dimensional work. For any two points $x,y \in \overline{\Omega}$, let $d_{\Omega}(x,y)$ denote the infimum of the arc 
lengths of all $C^1$ curves joining $x$ to $y$ in $\overline{\Omega}$ (i.e., the geodesic distance between $x$ and $y$ in 
$\overline{\Omega}$). Given any subsets $A, B\subset \overline{\Omega}$ and a point $x\in\overline{\Omega}$, we set
$$d_{\Omega}(A,B)=\inf\{d_{\Omega}(x,y):(x,y) \in A \times B \} \ \ \text{and}\ \ d_{\Omega}(x,A)=d (\{x\},A).$$ 
Consider two families of open nonempty subsets $\{\Omega_{t}^{-}\}_{t\in \mathbb{R}}$ and $\{\Omega_{t}^ { + } \}_{t\in 
\mathbb{R}} $ of $\Omega$ satisfying for any $t\in\mathbb{R}$,
\begin{equation}\label{subset1}
\left\{\begin{array}{l}\Omega_t^-\cap\Omega_t^+=\emptyset,\\
\partial\Omega_t^-\cap\Omega=\partial\Omega_t^+\cap\Omega=:\Gamma_t,\\
\Omega_t^-\cup\Gamma_t\cup\Omega_t^+=\Omega,\\
\sup\{d_\Omega(x,\Gamma_t):x\in\Omega_t^-\}=\sup\{d_\Omega(x,\Gamma_t):x\in\Omega_t^+\}=+\infty
\end{array}\right.
\end{equation}
and
\begin{equation}\label{subset2}
\begin{cases}
\inf\left\{\sup\{d_{\Omega} (y,\Gamma_t):y\in\Omega_t^+,d (x,y)\leq r\}:t\in\mathbb{R},x\in\Gamma_t\right\}\to+\infty,\\
\inf\left\{\sup\{d_{\Omega} (y,\Gamma_t):y\in\Omega_t^-,d (x,y)\leq r\}:t\in\mathbb{R},x\in\Gamma_t\right\}\to+\infty
\end{cases}
\ \ \ \ \text{as}\ r\to+\infty.
\end{equation}
Notice that the condition   \eqref{subset2} implies that for any $M>0$, there exists a constant $r_M>0$ such that for 
each $t\in\mathbb R$ and $x\in\Gamma_t$, there are $y^\pm\in\Omega^\pm_t$ such that
\begin{align*}
d_\Omega(x,y^\pm)\leq r_M\ \ \text{and}\ \ d_\Omega(y^\pm,\Gamma_t)\geq M.
\end{align*}
Moreover, the sets $\Gamma_t$ are assumed to be included in a finite number of graphs. Namely, there is an integer $n\geq
1$ such that for each $t\in\mathbb R$, there exist $n$ open subsets $\omega_{i,t}\subset\mathbb{R}^{N-1}$ ($1\leq i\leq 
n$), $n$ continuous maps $\psi_{i,t}:\omega_{i,t}\to\mathbb{R}$ and $n$ rotations $R_{i,t}$ of $\mathbb{R}^N$, with
\begin{equation}\label{subset3}
\Gamma_{t}\subset\bigcup_{1\leq i\leq n}R_{i,t}\left(\left\{x\in\mathbb{R}^{N}:x^{\prime}\in\omega_{i,t},x_{N}=\psi_{i} 
\left( x ^ { \prime } \right) \right\} \right).
\end{equation}
\begin{definition}\label{tran}
A classical solution $u: \overline\Omega\times \mathbb{R} \to (0,1)$ of problem \eqref{f1} is called a transition front 
connecting 0 and 1, if there exist some subsets $\{\Omega_t^\pm\}_{t\in\mathbb{R}}$ and 	$\{\Gamma_t\}_{t\in\mathbb{R}
}$ satisfying \eqref{subset1}, \eqref{subset2} and\eqref{subset3} such that for any $\varepsilon>0$, there is a positive 
constant $M_\varepsilon>0$ satisfying
\begin{equation*}\label{tran1}
\begin{cases}
\forall t\in\mathbb R,\ \forall x\in\overline{\Omega_t^+},\ \ d_{\Omega} ( x,\Gamma_t)\geq M_\epsilon\implies u(x,t)\geq 
1-\epsilon,\\
\forall t\in\mathbb R,\ \forall x\in\overline{\Omega_t^-},\ \ d_{\Omega} (x,\Gamma_t)\geq M_\epsilon \implies u(x,t)\leq
\epsilon.
\end{cases}
\end{equation*}
Moreover, $u$ is said to admit a global mean speed $\gamma\ (\geq0)$ if
\begin{equation*}\label{g-speed}
\frac{d_{\Omega}(\Gamma_t,\Gamma_s)}{|t-s|}\to\gamma\ \ \text{as}\ \vert t-s\vert\to +\infty.
\end{equation*}
\end{definition}
Great attention has been paid to the study of transition fronts since the pioneering works of \cite{bh1,jjj}; see also \cite
{hr,hr1,hamel,bo,shen1,shen2,shen3,shen4} and references therein. We show that the entire solution given in Theorem \ref
{th2} is a transition front connecting 0 and 1 in the sense of Definition \ref{tran}. Furthermore, this solution is 
trapped between two translates of the pulsating front as $t\rightarrow +\infty$.
\begin{theorem}\label{th4}
Let the entire solution $u(x,t)$ be given by Theorem \ref{th2}. Then $u(x,t)$ is a transition front connecting $0$ and 
$1$ with global mean speed $c_{e_1}$, and it satisfies
$$\sup_{\substack{(x,t)\in\overline{\Omega}\times\mathbb{R} \\ x_1 - c_{e_1} t \leq -\mathcal{R}}}(1-u(x,t)) \rightarrow 
0 \ \ \text{and}\ \ \sup_{\substack{(x,t)\in\overline{\Omega}\times\mathbb{R} \\ x_1 - c_{e_1}t\geq\mathcal{R}}}u(x,t) 
\rightarrow 0 \ \ \text{as}\ \mathcal{R} \rightarrow +\infty.$$
Moreover, for any $\beta \in (0,\min\{\beta_0,\beta_1\}]$, there exist two real numbers $\tau_1\leq 0\leq\tau_2$ such that
$$\limsup_{t\to +\infty}\sup_{x\in\overline{\Omega}}\frac{u(x,t)-\Phi_{e_1}(x_1-c_{e_1}t+\tau_1,x)}{\Phi_{e_1}^\beta(x_1-
c_{e_1}t+\tau_1, x)}\leq0$$
and
$$\liminf_{t\to +\infty}\inf_{x\in\overline{\Omega}}\frac{u(x,t)-\Phi_{e_1}(x_1-c_{e_1}t+\tau_2,x)}{\Phi_{e_1}^\beta(x_1-
c_{e_1}t+\tau_2, x)}\geq 0,$$
where $\beta_0$ is given by Theorem \ref{th2} and $0<\beta_1<1$ is a constant.
\end{theorem}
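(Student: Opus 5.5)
The plan is to build explicit super- and sub-solutions for large times on all of $\overline{\Omega}$, in the spirit of Fife--McLeod. They are modelled on the pulsating front and corrected by a term proportional to $\Phi_{e_1}^\beta$, which is what produces the weighted form of the estimates. The ordering at a large time $T$ will come from Theorem \ref{th2} and Theorem \ref{th3}. The limit relations as $\mathcal{R}\to\pm\infty$ and the transition-front property then follow from the trapping.

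\textbf{Step 1 (upper trap).} First, $u<1$. Next, Theorem \ref{th2} gives, for $t_0\ll -1$, the bound $u(x,t_0)\le \Phi_{e_1}(x_1-c_{e_1}t_0,x)+\varepsilon\Phi^\beta_{e_1}(x_1-c_{e_1}t_0,x)$. I will use a supersolution of the form
\[
\overline u(x,t)=\Phi_{e_1}(\xi(t),x)+q(t)\,\Phi_{e_1}^\beta(\xi(t),x),\qquad \xi(t)=x_1-c_{e_1}t+\zeta(t),
\]
with $q(t)=\varepsilon e^{-\delta(t-t_0)}$ and $\zeta(t)=-\sigma\varepsilon(1-e^{-\delta(t-t_0)})$, so that $\zeta$ is decreasing and bounded. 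This function is defined on all of $\mathbb R^N$, and I take it on $\overline\Omega$. The Neumann condition on $\partial K$ is the delicate point, because $\partial_n\overline u$ need not vanish there. I will handle it in one of two ways. One option is to note that for $t\ge t_0$ the obstacle lies where $\xi\le -C_3$, since $t_0$ is very negative and, for later times, the region near $K$ is eventually behind the front. There $\overline u\approx 1$, and I replace $\overline u$ by $\min\{\overline u,1\}$ near $K$. The other option is to add a correction near $K$ using the exponential smallness \eqref{A1}. I will check the differential inequality $\overline u_t-\Delta\overline u-f\ge0$ in three zones, using \eqref{f3}:
\begin{itemize}
\item In the zone $\xi\ge C_3$, I use \eqref{C3M3}. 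Here $f=0$ because $\Phi_{e_1}\le\theta$. The term $q\Phi_{e_1}^\beta$ gives $-\mathcal{L}\Phi^\beta_{e_1}\ge \beta(1-\beta)c_{e_1}^2\Phi^\beta_{e_1}/8>0$ for small $\beta$ (this fixes $\beta_1$), and this dominates $q'$ once $\delta$ is small.
\item In the zone $\xi\le -C_3$, I use $f_u\le\eta_1$ from \eqref{eta1theta1}.
\item In the middle zone, I use $\zeta'\,\partial_\xi\Phi_{e_1}\ge \sigma\delta\varepsilon M_3 e^{-\delta(t-t_0)}$ with $\sigma$ large.
\end{itemize}
The comparison principle then gives $u\le\overline u$. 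As $t\to\infty$, $q\to0$ and $\zeta\to-\sigma\varepsilon$, which yields $\tau_1=-\sigma\varepsilon\le0$. The weighted limsup follows because $\Phi_{e_1}(\xi+\zeta(t))/\Phi_{e_1}(\xi+\tau_1)$ and the analogous ratio of $\Phi^\beta_{e_1}$ terms stay controlled by Lemma \ref{lem1}.

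\textbf{Step 2 (lower trap).} This is the main obstacle, because the obstacle initially lowers $u$ behind the front. By Theorem \ref{th3} and $u_t>0$, for any $\varepsilon$ there exist $R$ and $T$ with $u(x,T)\ge1-\varepsilon$ on $\{x\in\overline\Omega: |x|\le R\}$. For large $|x|$ away from $K$, the influence of $K$ on $u$ is small: I will compare with $\Phi_{e_1}$ by using the approximation from Theorem \ref{th2} and a local estimate showing that the perturbation decays away from $K$. Together these give $u(x,T)\ge \Phi_{e_1}(x_1-c_{e_1}T+\tau,x)-\varepsilon\Phi^\beta_{e_1}(x_1-c_{e_1}T+\tau,x)$ for some $\tau$. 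Then I use the subsolution
\[
\underline u=\Phi_{e_1}(\xi+\zeta_2(t),x)-q(t)\Phi^\beta_{e_1}(\xi+\zeta_2(t),x),
\]
where $\zeta_2$ is increasing, and take its positive part. For $t\ge T$ the set $K$ lies in $\{\xi\le -C_3\}$, where $\underline u\le 1-\theta_1/2$. Since $\partial_n$ of the front may have either sign there, I will instead restrict comparison to the complement of a large ball around $K$. On the boundary of that ball, $u\ge1-\varepsilon\ge\underline u$ for $t\ge T$, because $u\to1$ locally uniformly there. The comparison principle then gives $u\ge\underline u$ and hence $\tau_2$. If necessary, I shift to make $\tau_2\ge0$.

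\textbf{Step 3 (transition front).} For $t\le$ some $T_0$, the two estimates come from \eqref{uxt1}. For $t\in[T_0,T]$, they come from continuity and the fact that $u\to0$ and $u\to 1$ far ahead and far behind on compact time intervals. For $t\ge T$, they come from the traps above. Together these give the two uniform sup statements as $\mathcal{R}\to+\infty$. I then set $\Omega_t^\pm=\{x\in\Omega:\ x_1-c_{e_1}t\lessgtr0\}$ and $\Gamma_t=\{x_1=c_{e_1}t\}\cap\Omega$. These sets are single graphs, they satisfy \eqref{subset1}--\eqref{subset3}, and $d_\Omega$ is comparable to the Euclidean distance up to a constant because $K$ is compact. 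Finally, $d_\Omega(\Gamma_t,\Gamma_s)/|t-s|\to c_{e_1}$, which gives the global mean speed.
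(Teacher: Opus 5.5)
There is a genuine gap, and it is exactly the point you call delicate: the Neumann condition on $\partial K$. In Step 1 your supersolution coincides at $t=t_0\ll-1$ with the actual front. At that time $\xi\ge -r-c_{e_1}t_0\gg C_3$ on $\partial\Omega$, so the obstacle lies far \emph{ahead} of the front, not in $\{\xi\le -C_3\}$. Since $\xi(t)$ decreases to $-\infty$ on $\partial\Omega$, the front of $\overline{u}$ must sweep across $K$. During that time $\partial_n\overline{u}=(1+q\beta\Phi_{e_1}^{\beta-1})(\partial_\xi\Phi_{e_1}n_1+\nabla_x\Phi_{e_1}\cdot n)$ is of order one with no sign control; for instance $\partial_\xi\Phi_{e_1}n_1\le -M_3n_1<0$ on the face of $K$ met first. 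Neither of your remedies helps. In that region $\min\{\overline{u},1\}=\overline{u}$, and \eqref{A1} gives no smallness when $\xi$ is near $0$ on $\partial K$. Moreover, even when $K$ is behind the front, $\partial_n\overline{u}$ is a multiple of $\partial_n\Phi_{e_1}$, so some weight with positive normal derivative is indispensable. A warning sign is that your argument would give $\tau_1=-\sigma\varepsilon$ with $\varepsilon$ arbitrarily small, i.e.\ the upper half of Theorem \ref{th5} without \eqref{jkl}.

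Step 2 has the same defect. The ``local estimate showing that the perturbation decays away from $K$'' is not available. Cutting out a ball also fails: you need $u\ge\underline{u}$ on the sphere for all $t\ge T$, but $\underline{u}\to1$ there, so $1-\varepsilon\ge\underline{u}$ is false for large $t$. You would need $1-u\le q(t)/2$ on the sphere, an exponential rate in Theorem \ref{th3} that is not proved.

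The missing idea, used in Lemmas \ref{lem6}--\ref{lem7}, is to give up on the position of the front. Since $\Phi_{e_1}$ is decreasing in $\xi$ and $\delta_\beta\ge1$, Theorem \ref{th2} gives $u(x,\mathcal{T}')\le\Phi_{e_1}(x_1-c_{e_1}\mathcal{T},x)+\zeta\Phi_{e_1}^\beta(x_1-c_{e_1}\mathcal{T},x)\delta_\beta(x)$ for $\mathcal{T}'\ll-1$ and $\mathcal{T}\gg1$. So the supersolution can start with its front already to the right of $K$. Then $\overline{\xi}\le0$ on $\partial\Omega$ for all $t\ge0$, and \eqref{A1} yields $|\partial_n\Phi_{e_1}(\overline{\xi},x)|\le 2A_1e^{\alpha_2 r-c_{e_1}\mathcal{T}}e^{-c_{e_1}\alpha_2 t}$. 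The weight $\delta_\beta$, with $\nabla\delta_\beta\cdot n=1$ on $\partial\Omega$, contributes $+\zeta\Phi_{e_1}^\beta e^{-\mu t}$, which dominates because $\mu\le c_{e_1}\alpha_2$.

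Symmetrically, the subsolution with front at $c_{e_1}\mathcal{T}$ is started at a large time $\mathcal{T}''$. At that time $u\ge 1-\zeta$ on the whole half-space $\{x_1\le q_2\}$, by Theorem \ref{th3} near $K$ and the chain-of-balls argument with Lemma \ref{lem4} elsewhere, while $\underline{u}(\cdot,0)\le0$ for $x_1\ge q_2$. The price is the large, uncontrolled shifts $\tau_1=c_{e_1}(\mathcal{T}'-\mathcal{T})-\rho\zeta\le0$ and $\tau_2=c_{e_1}(\mathcal{T}''-\mathcal{T})+\rho\zeta\ge0$. This is precisely why the theorem only asserts trapping. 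Once these two comparisons are in place, your Step 3 is essentially how the argument is completed: Theorem \ref{th2} for very negative times, monotonicity in between, and $\Omega_t^\pm$ cut by $x_1=c_{e_1}t$.
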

	
The following two theorems are proved under a suitable assumption \eqref{jkl}. Theorem \ref{th5} states that the entire 
solution $u(x,t)$ can recover its profile like the pulsating front $\Phi_{e_1}(x_1-c_{e_1}t,x)$, and Theorem \ref{th6} 
establishes the uniqueness of the entire solution.
\begin{theorem}\label{th5}
Let $u(x,t)$ be given by Theorem \ref{th2}. If $u(x,t)$ further satisfies  
\begin{equation}\label{jkl}
\sup_{\substack{(x,t)\in\overline{\Omega}\times\mathbb{R} \\ x_1 - c_{e_1} t \geq \mathcal{R}}} \left\vert\frac{u(x,t)}
{\Phi_{e_1}(x_1-c_{e_1}t,x)}-1\right\vert\rightarrow 0 \ \ \text{as}\ \mathcal{R}\rightarrow  +\infty.
\end{equation}
Then, for any $\beta \in (0,\min\{\beta_0,\beta_1\}]$, there holds
\begin{equation*}
\lim_{t \to +\infty} \frac{u(x,t)-\Phi_{e_1}(x_1-c_{e_1}t,x)}{\Phi_{e_1}^\beta(x_1-c_{e_1}t,x)}=0 \ \ \text{uniformly 
for}\ x \in \overline{\Omega},
\end{equation*}
where $\beta_0$ and $\beta_1$ are defined in Theorem \ref{th4}.
\end{theorem}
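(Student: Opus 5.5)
We will prove Theorem \ref{th5} by a sandwich argument: the squeezing constructions behind Theorem \ref{th4} are rerun, but with shifts whose size can now be taken arbitrarily small. Fix $\beta\in(0,\min\{\beta_0,\beta_1\}]$ and $\varepsilon>0$. The goal is to find $T_\varepsilon$ such that for $t\ge T_\varepsilon$ and all $x\in\overline\Omega$,
\begin{equation*}
\Phi_{e_1}(x_1-c_{e_1}t+\varepsilon',x)-\varepsilon\Phi^\beta_{e_1}\le u(x,t)\le \Phi_{e_1}(x_1-c_{e_1}t-\varepsilon',x)+\varepsilon\Phi^\beta_{e_1},
\end{equation*}
with $\varepsilon'\to0$ as $\varepsilon\to0$. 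Once this holds, the mean value theorem and the bounds $|\partial_\xi\Phi_{e_1}|\le M_1\Phi_{e_1}\le M_1\Phi_{e_1}^\beta$ from \eqref{M1M2} give the conclusion. On the region $\xi=x_1-c_{e_1}t\le -C_3$ one also uses that both $\Phi_{e_1}$ and $u$ are close to $1$, by Theorem \ref{th4}, while $\Phi_{e_1}^\beta$ is close to $1$ there.

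\textbf{Step 1: initial closeness at a large time.} By Theorem \ref{th3}, $u\to1$ locally uniformly, so near the obstacle $u$ is close to $1$ for large $t$. By Theorem \ref{th4}, $u$ is uniformly close to $1$ where $\xi\le-\mathcal R$. Hypothesis \eqref{jkl} gives $|u/\Phi_{e_1}-1|\le\delta$ where $\xi\ge\mathcal R$. The intermediate strip $|\xi|\le\mathcal R$ is a problem, since there we only know that $u$ lies between two translates. The key use of \eqref{jkl} is different: since $u_t>0$ and $u$ is trapped, as $t\to+\infty$ the function $u(\cdot+k,\,t+k_1/c_{e_1})$ restricted away from $K$ (with $k\in L\mathbb Z^N$ and $k_1$ large) converges along subsequences to an entire solution $w$ of \eqref{f2}. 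This $w$ satisfies $w/\Phi_{e_1}\to1$ as $\xi\to+\infty$ and is trapped between translates of $\Phi_{e_1}$. I would then invoke a Liouville-type uniqueness, in the spirit of Berestycki--Hamel \cite{jjj}, proved by the sliding method: any entire solution between two translates of the pulsating front is itself a translate. The normalization at $+\infty$ then forces the shift to be $0$, so $w=\Phi_{e_1}(x_1-c_{e_1}t,x)$. By compactness, this yields uniform closeness on the strip $|\xi|\le\mathcal R$ at some large time $T$, outside a bounded neighbourhood of $K$ that is handled by Theorem \ref{th3}.

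\textbf{Step 2: stability via super/sub-solutions.} From time $T$ on, I use the exterior-domain super- and sub-solutions already constructed for Theorem \ref{th4}, which are the source of the constant $\beta_1$. They have the form
\begin{equation*}
\overline u=\Phi_{e_1}\big(x_1-c_{e_1}t-\sigma\delta(1-e^{-\mu(t-T)}),x\big)+\delta e^{-\mu(t-T)}\Phi^\beta_{e_1},
\end{equation*}
and symmetrically for $\underline u$. The verification proceeds region by region: on $\xi\ge C_3$ via \eqref{C3M3}, on $\xi\le -C_3$ via \eqref{eta1theta1}, and on the middle region via $\partial_\xi\Phi_{e_1}\le -M_3$ with $\sigma$ large. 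The Neumann boundary condition on $\partial K$ is handled because for $t\ge T$ the obstacle lies deep in the region $\xi\ll0$, where a cutoff correction keeps the functions ordered. The comparison principle then gives the sandwich with $\varepsilon'=\sigma\delta$, and letting $\delta\to0$ finishes the proof.

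\textbf{Main obstacle.} The hardest step is the Liouville step in Step 1, namely identifying the limit $w$ as exactly $\Phi_{e_1}$. A second difficulty is that the degenerate combustion tail at $0$ (where $f\equiv0$ below $\theta$) breaks the standard exponential stability there. This is precisely why the weighted error $\Phi^\beta_{e_1}$ is used and why \eqref{jkl} is needed: \eqref{jkl} pins the tail, since otherwise the shift could drift.
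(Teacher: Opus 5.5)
Your proposal is correct in substance, and its decisive step is the paper's. You shift by lattice vectors $k\in L\mathbb{Z}^N$ with time shift $k_1/c_{e_1}$, so that $\xi=x_1-c_{e_1}t$ is unchanged. You pass to an entire solution $w$ of \eqref{f2} in $\mathbb{R}^N$. You then identify $w\equiv\Phi_{e_1}(x_1-c_{e_1}t,x)$ by a Liouville argument pinned down by \eqref{jkl}.

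Where you differ is in the organization. The paper argues by contradiction along an arbitrary sequence $(x_n,t_n)$ with $t_n\to+\infty$:
\begin{itemize}
\item bounded $x_n$ and $\xi_n\to-\infty$ are excluded by Theorem \ref{th4};
\item $\xi_n\to+\infty$ is excluded by the bounds \eqref{fdf} and \eqref{lll} (you could equally use \eqref{jkl} directly, since $|u-\Phi_{e_1}|/\Phi_{e_1}^\beta=|u/\Phi_{e_1}-1|\,\Phi_{e_1}^{1-\beta}\le|u/\Phi_{e_1}-1|$);
\item bounded $\xi_n$ is handled by the Liouville step.
\end{itemize}
Because your Step 1 is a compactness argument, it is uniform in time. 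Run by contradiction, it yields the weighted closeness at \emph{every} sufficiently large time, which is already the theorem, so your Step 2 is redundant.

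Step 2 is not wrong. It is Lemma \ref{lem6} with $\zeta$ small; your ``cutoff correction'' is the factor $\delta_\beta(x)$ with $\nabla\delta_\beta\cdot n=1$ on $\partial\Omega$. The shift $\rho\zeta$ (your $\sigma\delta$) does tend to $0$, because $\rho_0(\beta,\mu)$ is independent of $\zeta$. The starting time must exceed $\mathcal{T}_0(\beta,\rho,\zeta)$, which grows as $\zeta\to0$; you can meet this precisely because Step 1 holds at all large times. What Step 2 buys is a stability statement that needs closeness at only one time. Since your only source of closeness is the compactness argument itself, it adds nothing here.

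One thing to adjust in the Liouville step. In this combustion setting the paper neither uses nor proves an unconditional ``trapped between two translates implies a translate'' statement. The lemma it quotes from \cite[Lemma 2.4]{F1} carries an extra hypothesis: for every $\tau>0$ there is $Q_\tau$ with $\Phi_{e_1}(\xi+\tau,x)\le U\le\Phi_{e_1}(\xi-\tau,x)$ for $\xi\ge Q_\tau$. It then concludes $U\equiv\Phi_{e_1}(x_1-c_{e_1}t,x)$ directly. The paper obtains this tail condition from \eqref{jkl} (claim \eqref{bnm}), using $\Phi_{e_1}(\xi-\tau,x)/\Phi_{e_1}(\xi,x)\to e^{c_{e_1}\tau}$ from Lemma \ref{lem1}, and passes it to the limit along the shifts. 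Your normalization $w/\Phi_{e_1}\to1$ as $\xi\to+\infty$ gives exactly this condition. So feed it in as a hypothesis of the Liouville lemma, rather than using it afterwards to fix the shift of a translate whose existence the available result does not provide.
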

\begin{remark}{\rm
Here we adopt the assumption \eqref{jkl} from \cite{F1} to prove the conclusion. Actually, if the disturbance generated 
by the obstacle on the propagation process of the pulsating front decays exponentially with a sufficiently large decay 
rate $\lambda>c_{e_1}$ as $x_1-c_{e_1}t\rightarrow  +\infty$, then the assumption \eqref{jkl} holds true. It is of great 
interest and challenge to weaken or remove the assumption, and we leave it for further studies.}
\end{remark}
\begin{theorem}\label{th6}
Let the entire solution $u(x,t)$ be given by Theorem \ref{th2}. If there exists another entire solution $\widehat{u}(x,t)$ 
satisfying the conditions of Theorem \ref{th2}, and both $u(x,t)$ and $\widehat{u}(x,t)$ satisfy the assumption 
\eqref{jkl}, then $u(x,t)\equiv\widehat{u}(x,t)$ for all $(x,t)\in \overline{\Omega}\times \mathbb{R}$.
\end{theorem}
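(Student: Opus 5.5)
The uniqueness of Theorem \ref{th6} will follow from a comparison argument in which the behaviour at $t=-\infty$ (condition \eqref{uxt1}) and the behaviour at $t=+\infty$ (Theorem \ref{th5}, which applies to both $u$ and $\widehat u$ because both satisfy \eqref{jkl}) are combined with the strict monotonicity $u_t>0$. Set
\[
t^*:=\inf\{\tau\ge 0:\ \widehat u(x,t+\tau)\ge u(x,t)\ \text{for all}\ (x,t)\in\overline{\Omega}\times\mathbb{R}\}.
\]
We will first show that this set is nonempty and then show that $t^*=0$. The symmetric argument gives $u\ge\widehat u$, and hence $u\equiv\widehat u$.

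\emph{Nonemptiness.} For $\tau>0$ we have $\Phi_{e_1}(\xi-c_{e_1}\tau,x)\ge\Phi_{e_1}(\xi,x)$, with a gap of order $\Phi_{e_1}$ near $\xi=+\infty$ (by Lemma \ref{lem1}, the two differ there by the factor $e^{c_{e_1}^2\tau}$) and of order $1-\Phi_{e_1}$ near $\xi=-\infty$. By \eqref{uxt1} with $\beta=0$ and with $\beta=\beta_0$, for $t\le -T$ the difference $\widehat u(x,t+\tau)-u(x,t)$ equals this positive gap plus errors that are $o(\Phi_{e_1}^{\beta_0})$. The region where $\Phi_{e_1}$ is small needs care, because the error $o(\Phi^{\beta_0})$ is not dominated by $\Phi$. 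Here I would use the super/sub-solutions already constructed in the proof of Theorem \ref{th2} (of the form $\Phi_{e_1}(\xi\pm\sigma(t),x)\pm p(t)\Phi_{e_1}^\beta$), which trap both solutions for $t\le -T$. A shift $\tau$ large enough compared with $\sigma$ then gives $\widehat u(\cdot,\cdot+\tau)\ge u$ for $t\le -T$. Symmetrically, using Theorem \ref{th5} and the trapping of Theorem \ref{th4}, the same inequality holds for $t\ge T$. On the compact-in-time middle range $[-T,T]$, Theorem \ref{th4} gives uniform limits as $x_1-c_{e_1}t\to\pm\infty$, and the comparison principle on $[-T,\infty)$ with an initial ordering reached through the super-solution then closes the argument. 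Thus some large $\tau$ belongs to the set.

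\emph{Showing $t^*=0$.} Suppose $t^*>0$. By continuity, $\widehat u(x,t+t^*)\ge u(x,t)$. Since $u,\widehat u$ both have the same asymptotics at $t=\pm\infty$ (Theorem \ref{th2} and Theorem \ref{th5}), a shift $t^*>0$ gives a strict gap of size comparable to $\Phi_{e_1}^{\beta}$-weighted quantities as $t\to\pm\infty$. As in the classical argument of \cite{H1,F1}, we then distinguish two cases.
\begin{enumerate}[(i)]
\item $\inf\{\widehat u(x,t+t^*)-u(x,t)\}$ taken over $\{-C_3\le x_1-c_{e_1}t\le C_3\}$ is positive. Then we can decrease the shift by a small $\delta$, using the super/sub-solution perturbation near $t=\pm\infty$ (the ``$\Phi_{e_1}(\xi-\sigma)+p\Phi^\beta$'' construction with \eqref{eta1theta1} handling the region near 1 and \eqref{C3M3} handling the region near 0). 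The ordering is preserved, which contradicts minimality of $t^*$.
\item This infimum is zero. Then there are sequences $(x_n,t_n)$ along which the difference tends to $0$. Since $t_n$ must stay bounded (the asymptotic gap at $\pm\infty$ excludes $|t_n|\to\infty$), and after a periodic translation in $x$ if $|x_n|\to\infty$, we pass to the limit. If $x_n$ stays bounded, the strong maximum principle (with Hopf's lemma on $\partial\Omega$) gives $\widehat u(\cdot,\cdot+t^*)\equiv u$. Then $u(x,t)=\widehat u(x,t+t^*)$, and letting $t\to-\infty$ contradicts \eqref{uxt1}, since $\Phi_{e_1}(\xi-c_{e_1}t^*,x)\ne\Phi_{e_1}(\xi,x)$. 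If $|x_n|\to\infty$, far from $K$ both solutions converge locally to front-like entire solutions of \eqref{f2} in $\mathbb{R}^N$. These limits are still ordered with a shift, and the same contradiction follows from the maximum principle.
\end{enumerate}

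The main obstacle is the perturbation step in case (i), specifically in the region $\xi\to+\infty$. There $f\equiv0$ and the equation is degenerate, so there is no linear stability to absorb the error. This is exactly why \eqref{jkl} is needed. I would use \eqref{jkl} for both solutions to get $\widehat u/u\to1$ uniformly for $\xi\ge\mathcal R$. The strict gap there then comes from the factor $e^{c_{e_1}^2 t^*}$: $\widehat u(x,t+t^*)/u(x,t)\to e^{c_{e_1}^2t^*}>1$ uniformly. This ratio gap survives a small decrease $\delta$ of $t^*$, and it gives the ordering on the far region without invoking stability. The near-1 region is handled by \eqref{eta1theta1}, and the remaining compact middle strip is handled by case (i) and continuity.
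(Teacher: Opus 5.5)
Your overall architecture matches the paper's: a minimal time shift, then a dichotomy between a positive infimum on a strip and a touching sequence, with $t_n\to\pm\infty$ excluded by Theorems~\ref{th2} and~\ref{th5}, and the strong maximum principle with Hopf's lemma (or translated limits in $\mathbb{R}^N$) at finite touching points. That part is fine. The gap is the nonemptiness step, and it sits exactly in the far field you flagged.

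First, the barriers $u^\pm$ of Lemma~\ref{lem5} are only known to trap the constructed $u$, via the initial data $u^+(\cdot,-n)$. For $\widehat u$ the rate in \eqref{uxt1} is unknown, so you cannot start a comparison against corrections of size $\zeta\delta(x)e^{\frac{1}{2}c_{e_1}\alpha_1 t}\Phi_{e_1}^{\beta_0}$. Second, and decisively, even if both solutions were trapped, no shift $\tau$ would help. For fixed $t$ and $\xi=x_1-c_{e_1}t\to+\infty$, the terms $\pm\zeta\Phi_{e_1}^{\beta_0}(\xi,x)\delta(x)e^{\frac{1}{2}c_{e_1}\alpha_1 t}$ dominate the $O(\Phi_{e_1})$ gap. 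Hence $u^-(x,t+\tau)<0<u^+(x,t)$ there, and trapping gives no sign information on $\widehat u(x,t+\tau)-u(x,t)$. The barriers carry the same $\Phi_{e_1}^{\beta}$ defect as \eqref{uxt1}. The same objection applies to your $t\ge T$ claim via Theorems~\ref{th4}--\ref{th5}. Those $t\ge T$ and $[-T,T]$ discussions are superfluous anyway, since an ordering at one time propagates forward by comparison. A smaller point: in case (i) the strip should be $\{-\mathcal{R}'\le x_1-c_{e_1}t\le\mathcal{R}\}$, adapted to where \eqref{jkl} and \eqref{eta1theta1} take over and containing the transition zone of the shifted $\widehat u$, not $\{|x_1-c_{e_1}t|\le C_3\}$.

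The repair is your own ratio device, applied at very negative times.
\begin{enumerate}[(1)]
\item Since \eqref{jkl} is uniform in $t\in\mathbb{R}$, for each fixed $\tau>0$ it gives, with Lemma~\ref{lem1}, an $\mathcal{R}_\tau$ such that $\widehat u(x,t+\tau)>u(x,t)$ on $\{x_1-c_{e_1}t\ge\mathcal{R}_\tau\}$ for all $t$.
\item On $\{-\mathcal{R}'\le x_1-c_{e_1}t\le\mathcal{R}_\tau\}$, \eqref{uxt1} with $\beta=0$ gives the ordering for $t\le -T$. This works because $\Phi_{e_1}(\xi-c_{e_1}\tau,x)-\Phi_{e_1}(\xi,x)$ has a positive minimum on that compact $\xi$-range.
\item On $\{x_1-c_{e_1}t<-\mathcal{R}'\}$ with $t\le -T$ (which misses $K$ once $T$ is large), both values lie in $[1-\theta_1,1]$. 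The linearized coefficient is then $\le\eta_1<0$, so $e^{\eta_1(t-s)}$ bounds $u-\widehat u(\cdot,\cdot+\tau)$ from above; letting $s\to-\infty$ gives the ordering.
\end{enumerate}
Forward comparison then yields $\widehat u(\cdot,\cdot+\tau)\ge u$ on $\overline{\Omega}\times\mathbb{R}$ for \emph{every} $\tau>0$. Hence $t^*=0$ at once, and cases (i)--(ii), Theorem~\ref{th5} and Lemma~\ref{lem8} become unnecessary.

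The paper gets nonemptiness differently. It uses Theorem~\ref{th4}, the bound $\theta_2\le u,\widehat u\le 1-\theta_2$ on $\{|x_1-c_{e_1}t|\le B\}$ from \cite[Theorem 1.2]{jjj}, and the sliding argument of \cite[Lemma 4.2]{jjj} outside that strip. So in the paper \eqref{jkl} enters only through Theorem~\ref{th5}, to exclude $t_n\to+\infty$. That is what lets the paper's Remark weaken the hypothesis to convergence to $\Phi_{e_1}$ as $t\to+\infty$. The short route above uses \eqref{jkl} near $t=-\infty$, and the paper does not rely on \eqref{jkl} to control the degenerate far field.
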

\begin{remark}
{\rm 
In fact, we show that Theorem \ref{th6} holds under a weaker condition: that is
$u(x,t)$ and $\widehat{u} (x,t)$ satisfy
$$u(x,t)-\Phi_{e_1}(x_1-c_{e_1}t,x)\rightarrow 0\ \  \text{as}\  t\rightarrow +\infty\ 
 \text{uniformly for}\ x\in \overline{\Omega}$$
and
$$\widehat{u}(x,t)-\Phi_{e_1}(x_1-c_{e_1}t,x)\rightarrow 0\ \  \text{as}\  
t\rightarrow +\infty\  \text{uniformly for} 
\ x\in \overline{\Omega}.$$
}
\end{remark}

\section{Existence of entire solutions}\label{sec3}

In this section, we first construct super- and sub-solutions of problem \eqref{f1} for very negative time. Then we prove 
that there exists an entire solution originating from a pulsating front. Let us first construct a non-negative function 
$\delta_ 0(x)\in C^2(\mathbb{R}^N)$ by using the classical distance function from \cite{D1}. The function $\delta_ 0(x)$ 
can be chosen such that $\nabla \delta_ 0(x) \cdot n(x)=1$ for $x \in \partial\Omega$, and its support is contained in 
$B(0,r_1)$ with $r_1>r>0$ $(K\subset B(0,r))$. Define $\delta(x)=\delta_0(x)+C_\delta$, where $C_\delta>0$ is a 
sufficiently large constant such that $\delta(x)$ satisfies
\begin{equation}\label{delta}
\begin{split}
\delta(x)&\geq 1, \\
\left\Vert\frac{\vert\nabla\delta(x)\vert}{\delta(x)}\right\Vert_{L^{\infty}(\mathbb{R}^N)}&\leq\frac{c_{e_1}\alpha_1}{4( 
1+M_1+M_2)}, \\
\left\Vert\frac{\Delta\delta(x)}{\delta(x)}\right\Vert_{L^{\infty}(\mathbb{R}^N)} &\leq\frac{c_{e_1}\alpha_1}{4(1+M_1+M_2
)},
\end{split}
\end{equation}
where $\alpha_1>0$ is given by \eqref{A1} and $M_1,M_2>0$ are given by \eqref{M1M2}.
		
\begin{lemma}\label{lem5}
Let $\beta_0=\min\left\{\frac{1}{2},\frac{c_{e_1}\alpha_1}{8M_1M_2},\frac{1}{8c_{e_1}\Vert\delta(x)\Vert_{L^\infty(\mathbb
{R}^N)}}\right\}$. Then there exist $\rho>0$, $\zeta>0$ and $T<0$ such that the following functions
$$u^+(x,t)=\Phi_{e_1}(\xi^+(x,t),x)+\zeta\Phi_{e_1}^{\beta_0}(\xi,x)\delta(x)e^{\frac{1}{2}c_{e_1}\alpha_1t}$$
and 
$$u^-(x,t)=\Phi_{e_1}(\xi^-(x,t),x)-\zeta\Phi_{e_1}^{\beta_0}(\xi,x)\delta(x)e^{\frac{1}{2}c_{e_1}\alpha_1t}$$
are the super-solution and sub-solution of problem \eqref{f1} for $x\in \overline{\Omega}$ and $t\leq T$, respectively, where 
$$\xi^+(x,t)=x_1-c_{e_1}(t+\rho e^{\frac{1}{2}c_{e_1}\alpha_1t})\ \ \text{and}\ \ 
\xi^-(x,t)=x_1-c_{e_1}(t-\rho e^{\frac
{1}{2}c_{e_1}\alpha_1t}),$$
$\alpha_1>0$ is given by \eqref{A1} and $M_1,M_2>0$ are given by \eqref{M1M2}.
\end{lemma}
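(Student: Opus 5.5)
We verify the two claims separately: the boundary inequality on $\partial\Omega$ and the differential inequality in $\Omega$. We write the argument for $u^+$; the sub-solution $u^-$ is symmetric, with the signs of $\rho$ and $\zeta$ reversed. Throughout, $\xi=x_1-c_{e_1}t$, and we write $\epsilon(t):=e^{\frac12 c_{e_1}\alpha_1 t}$.

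\emph{Boundary condition.} Since $K\subset B(0,r)$ is compact, for $x\in\partial\Omega$ we have $|x_1|\le r$. Hence $\xi,\xi^+\to+\infty$ uniformly on $\partial\Omega$ as $t\to-\infty$.
\begin{itemize}
\item By \eqref{ddd} and Lemma \ref{lem2}, $|\nabla_x\Phi_{e_1}(\xi^+,x)|=o(\Phi_{e_1}(\xi^+,x))$ and $\partial_\xi\Phi_{e_1}\approx -c_{e_1}\Phi_{e_1}$ in that range.
\item Thus $|\partial_n\Phi_{e_1}(\xi^+,x)|\le (|\partial_\xi\Phi_{e_1}|+|\nabla_x\Phi_{e_1}|)(\xi^+,x)\le (\tfrac32c_{e_1}+1)\Phi_{e_1}(\xi^+,x)$.
\item The perturbation contributes $\zeta\epsilon(t)\,\partial_n(\Phi_{e_1}^{\beta_0}(\xi,x)\delta(x))$. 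Using $\nabla\delta\cdot n=1$, this is at least $\zeta\epsilon(t)\Phi_{e_1}^{\beta_0}(\xi,x)\bigl(1-\beta_0(M_1+M_2)\|\delta\|_\infty\bigr)$, up to lower order terms.
\item On $\partial\Omega$, $\Phi_{e_1}(\xi^+,x)\le \Phi_{e_1}(\xi,x)\le A_1e^{-\alpha_1\xi}\le A_1e^{\alpha_1 r}e^{\alpha_1c_{e_1}t}$. Therefore $\Phi_{e_1}(\xi^+,x)/(\epsilon(t)\Phi^{\beta_0}_{e_1}(\xi,x))\le C\,\Phi_{e_1}^{1-\beta_0}(\xi,x)/\epsilon(t)\le Ce^{(1-\beta_0-\frac12)\alpha_1c_{e_1}t}\to0$ as $t\to-\infty$, because $\beta_0\le\frac12$. (If $\beta_0=\frac12$ exactly, fixing the constant $\zeta$ large suffices.)
\item The smallness $\beta_0\le 1/(8c_{e_1}\|\delta\|_\infty)$ is what keeps the derivative of $\Phi^{\beta_0}$ from spoiling the sign.
\end{itemize}
Hence $\partial_n u^+\ge0$ on $\partial\Omega$ for $t\le T$.

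\emph{Interior inequality.} Compute $N[u^+]:=u^+_t-\Delta u^+-f(x,u^+)$ using \eqref{f3} for $\Phi_{e_1}(\xi^+,x)$. This gives
\[
N[u^+]=-c_{e_1}\rho\tfrac12c_{e_1}\alpha_1\epsilon\,\partial_\xi\Phi_{e_1}(\xi^+,x)+f(x,\Phi_{e_1}(\xi^+,x))-f(x,u^+)+\zeta\epsilon\bigl[(\partial_t-\Delta)(\Phi_{e_1}^{\beta_0}(\xi,x)\delta)+\tfrac12c_{e_1}\alpha_1\Phi_{e_1}^{\beta_0}\delta\bigr].
\]
The first term is nonnegative since $\partial_\xi\Phi_{e_1}<0$. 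Expanding the last bracket, using $\Phi^{\beta_0}$ with the chain rule and \eqref{f3}, gives main term $\Phi_{e_1}^{\beta_0}\delta$ times
\[
\tfrac12c_{e_1}\alpha_1-\beta_0\frac{f(x,\Phi_{e_1})}{\Phi_{e_1}}+O\bigl(\beta_0(1-\beta_0)(M_1+M_2)^2+\|\nabla\delta/\delta\|(M_1+M_2)+\|\Delta\delta/\delta\|\bigr).
\]
By \eqref{delta} and the choice $\beta_0\le c_{e_1}\alpha_1/(8M_1M_2)$, the error terms absorb at most half of $\frac12 c_{e_1}\alpha_1$. The term with $f/\Phi$ appears with a favourable sign after regrouping, or is bounded by $\beta_0M$. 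We then split into three zones according to $\xi^+$.
\begin{itemize}
\item \emph{Region $\xi^+\ge C_3$.} Here $u^+$ and $\Phi_{e_1}(\xi^+,x)$ lie in $[0,\theta]$ for $t\le T$, because $\zeta\epsilon\delta\Phi^{\beta_0}$ is small. Hence $f(x,\Phi_{e_1}(\xi^+,x))=f(x,u^+)=0$, and positivity comes from the bracket.
\item \emph{Region $\xi^+\le -C_3$.} Both values lie in $[1-\theta_1,1+\theta_1]$. By \eqref{eta1theta1}, $f(x,\Phi_{e_1}(\xi^+,x))-f(x,u^+)\ge -\eta_1\zeta\epsilon\Phi^{\beta_0}\delta>0$, which dominates the bracket errors there since $\Phi\approx1$.
\item \emph{Middle region $|\xi^+|\le C_3$.} Here $\partial_\xi\Phi_{e_1}\le -M_3$, so the $\rho$ term is at least $\tfrac12c_{e_1}^2\alpha_1\rho M_3\epsilon$. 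The remaining terms are bounded by $C\zeta\epsilon$, with $C$ depending on $M,\|\delta\|_\infty,M_1,M_2$. Choosing $\rho$ large relative to $\zeta$ gives $N[u^+]\ge0$.
\end{itemize}
Finally pick $T$ so negative that all smallness requirements hold; note $\xi$ and $\xi^+$ differ by $c_{e_1}\rho\epsilon(t)\to0$.

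\emph{Main obstacle.} The hardest step is the far-field region $\xi\to+\infty$. There the only positive contribution is $\frac12c_{e_1}\alpha_1\zeta\epsilon\Phi^{\beta_0}\delta$, and it must dominate the terms created by differentiating $\Phi^{\beta_0}(\xi,x)\delta(x)$, including the cross terms $\nabla_x\Phi\cdot\nabla\delta$ and $\beta_0(\beta_0-1)|\nabla\Phi|^2/\Phi^2$. This is exactly where the constraints in \eqref{delta} and the definition of $\beta_0$ are tuned. I would first verify that inequality carefully, bounding each ratio by $M_1,M_2$, and then treat the zones with $\rho$ and $T$ as above.
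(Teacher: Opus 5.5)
Your architecture matches the paper's. You split into three zones according to $\xi^+$ and let the $\rho$-term pay for the middle zone. On $\partial\Omega$ you use $\nabla\delta\cdot n=1$, $\beta_0\le\frac12$ and a large $\zeta$, and you fix $\zeta$, $\rho$, $T$ in that order.

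However, the step you call the crux does not close as you set it up. This is the far-field bracket, where $f$ vanishes and the $\rho$-term cannot help because $\Phi_{e_1}(\xi^+,x)\ll\Phi_{e_1}^{\beta_0}(\xi,x)$. You write the $f$-term with a minus sign, then hedge between ``favourable after regrouping'' and a bound $\beta_0M$. You also treat the $|\nabla\Phi_{e_1}|^2/\Phi_{e_1}^2$ contribution as an adverse error of size $\beta_0(1-\beta_0)(M_1+M_2)^2$. The definition of $\beta_0$ controls $\beta_0M_1M_2$, but not $\beta_0M_1^2$, $\beta_0M_2^2$ or $\beta_0M$. So your claim that ``the error terms absorb at most half of $\frac12c_{e_1}\alpha_1$'' does not follow.

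The missing observation is a sign computation. Since $\Phi_{e_1}(x_1-c_{e_1}t,x)$ solves \eqref{f2}, one has
\[
(\partial_t-\Delta)\bigl[\Phi_{e_1}^{\beta_0}(\xi,x)\bigr]=\beta_0\Phi_{e_1}^{\beta_0-1}f(x,\Phi_{e_1})+\beta_0(1-\beta_0)\Phi_{e_1}^{\beta_0-2}\bigl|\partial_\xi\Phi_{e_1}\,e_1+\nabla_x\Phi_{e_1}\bigr|^2\ \ge 0,
\]
because $f\ge 0$ on $[0,1]$ and $0<\beta_0<1$. The paper keeps only the cross term $2\partial_\xi\Phi_{e_1}\partial_{x_1}\Phi_{e_1}\ge -2M_1M_2\Phi_{e_1}^2$ of this square, and that is exactly where $\beta_0\le c_{e_1}\alpha_1/(8M_1M_2)$ enters. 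The only genuinely adverse terms are those containing $\nabla\delta$ and $\Delta\delta$. By \eqref{delta} and $2\beta_0\le 1$ these total at most $\frac14 c_{e_1}\alpha_1$. Hence the whole $\zeta$-bracket is nonnegative everywhere, and Case 1 is immediate. Case 2 then only needs $f(x,\Phi_{e_1}(\xi^+,x))-f(x,u^+)\ge 0$, instead of an $\eta_1$-domination that you could not justify, since nothing ties $\beta_0$ to $\eta_1$.

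On $\partial\Omega$ there are two further slips.
\begin{enumerate}
\item Since $\xi^+<\xi$ and $\Phi_{e_1}$ is decreasing in $\xi$, you have $\Phi_{e_1}(\xi^+,x)\ge\Phi_{e_1}(\xi,x)$, not $\le$. What you actually need is $\Phi_{e_1}(\xi^+,x)\le e^{M_1c_{e_1}\rho\epsilon(t)}\Phi_{e_1}(\xi,x)\le 2\Phi_{e_1}(\xi,x)$ for $t\le T(\rho)$. This is the role of $T_1$ in the paper.
\item The derivative of $\Phi_{e_1}^{\beta_0}(\xi,x)$ must be estimated with the asymptotic ratios from \eqref{ddd}, both at most $2c_{e_1}$ on $\partial\Omega$ for $t$ very negative, not with $M_1+M_2$. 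The constraint $\beta_0\le 1/(8c_{e_1}\Vert\delta\Vert_{L^\infty})$ gives $4c_{e_1}\beta_0\Vert\delta\Vert_{L^\infty}\le\frac12$, but it says nothing about $\beta_0(M_1+M_2)\Vert\delta\Vert_{L^\infty}$.
\end{enumerate}
With these corrections your argument becomes the paper's.
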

		
\begin{proof} 
{\it Step 1: choice of parameters.} Take 
$$\zeta\geq 8c_{e_1}(1+A_1)e^{\alpha_1 r}, \rho\geq\frac{2M\zeta\Vert\delta(x)\Vert_{L^\infty(\mathbb{R}^N)}}{M_0c_{e_1}^2
\alpha_1}$$
and
$$T\leq\min\left\{-\frac{r}{c_{e_1}},\ T_1,\ \frac{2}{c_{e_1}\alpha_1}\ln {\frac{\theta_1}{2\zeta\Vert\delta(x)\Vert_{L^
\infty(\mathbb{R}^N)}}}\right\}$$
in that order, where $A_1,\alpha_1>0$ are given by \eqref{A1}, $M, M_0>0$ are given by \eqref{M} and \eqref{CM0}, and 
$\theta_1$ is given by \eqref{eta1theta1}.

Here we provide details on the choice of $T$. It follows from $T\leq -\frac{r}{c_{e_1}}$ that $-r-c_{e_1}T\geq 0$. Since 
$K \subset B(0,r)$, we can get that $\xi(x,T)\geq 0$ for all $x \in \partial\Omega $. In this case, from \eqref{A1} we 
have that
$$\Phi_{e_1}(\xi,x)\leq A_1e^{-\alpha_1\xi} \ \ \text{for all}\  x \in \partial\Omega\ \ \text{and}\ t\leq T.$$
Additionally, by \eqref{ddd} and Lemma \ref{lem1}, it follows that
$$\frac{\partial_\xi\Phi_{e_1}(\xi,x)}{\Phi_{e_1}(\xi,x)}\rightarrow -c_{e_1} 
\ \ \text{as}\  t\rightarrow -\infty \ 
\text{uniformly for}\  x \in \partial\Omega \subset B(0,r), $$
$$\frac{\vert\nabla_x\Phi_{e_1}(\xi,x)\vert}{\Phi_{e_1}(\xi,x)}\rightarrow 0 \ \ \text{as}\  
t\rightarrow -\infty \ 
\text{uniformly for}\   x \in \partial\Omega \subset B(0,r), $$
$$\frac{\Phi_{e_1}(\xi^+,x)}{\Phi_{e_1}(\xi,x)}\sim e^{c_{e_1}^2\rho e^{\frac{1}{2}c_{e_1}\alpha_1t}} \ \ \text{as}\ 
t\rightarrow -\infty \ \text{uniformly for}\ x \in \partial\Omega \subset B(0,r). $$
Therefore, there exists a sufficiently negative constant $T_1<0$ dependent on $\rho$ but independent of $\zeta$, such 
that for $T\leq T_1$, there holds
$$\frac{\vert\partial_\xi\Phi_{e_1}(\xi^+,x)\vert}{\Phi_{e_1}(\xi,x)},\ 
\frac{\vert\nabla_x\Phi_{e_1}(\xi^+,x)\vert}{\Phi_{e_1}(\xi,x)},\  
\frac{\vert\nabla_x\Phi_{e_1}(\xi,x)\vert}{\Phi_{e_1}(\xi,x)},\   
\frac{\vert\partial_\xi\Phi_{e_1}(\xi,x)\vert}{\Phi_{e_1}(\xi,x)} \leq 2c_{e_1}$$
for all $x \in \partial\Omega$ and $t\leq T$. By a similar argument, the same estimates hold when replacing $\xi^+(x,t)$ 
with $\xi^-(x,t)$, even if this requires choosing a more negative $T_1$.

{\it Step 2: verification of the differential inequalities.} Define
\begin{equation*}
\mathcal{L} [u](x,t)=u_t-\Delta u-f(x,u).
\end{equation*}
We only prove that $\mathcal{L} [u^+](x,t)\geq 0$ for $x\in \Omega$ and $t\leq T$, since $\mathcal{L} [u^-](x,t)\leq 0$ 
can be treated similarly. Since $\beta_0\leq \min\left\{\frac{1}{2},\frac{c_{e_1}\alpha_1}{8M_1M_2}\right\}$, using 
\eqref{f3}, \eqref{M1M2} and \eqref{delta}, after some calculations one infers that
\begin{align*}
\mathcal{L}[u^+](x,t)=&\ f(x,\Phi_{e_1}(\xi^+,x))-f(x,u^+(x,t))-\frac{1}{2}c_{e_1}^2\rho\alpha_1\partial_\xi\Phi_{e_1}(\xi
^+,x)e^{\frac{1}{2}c_{e_1}\alpha_1t}\\
&\ +\frac{1}{2}\zeta c_{e_1}\alpha_1\Phi_{e_1}^{\beta_0}(\xi,x)\delta(x)e^{\frac{1}{2}c_{e_1}\alpha_1t}+\zeta\beta_0\Phi_
{e_1}^{\beta_0-1}(\xi,x)\delta (x)e^{\frac{1}{2}c_{e_1}\alpha_1 t}f(x,\Phi_{e_1}(\xi,x))\\
&\ +\zeta\beta_0(1-\beta_0)\Phi_{e_1}^{\beta_0-2}(\xi,x)\delta (x)e^{\frac{1}{2}c_{e_1}\alpha_1 t}\Big[ \vert\partial_\xi
\Phi_{e_1}(\xi,x)\vert^2+\vert\nabla_x\Phi_{e_1}(\xi,x)\vert^2\\
&\ +2\partial_\xi\Phi_{e_1}(\xi,x)\partial_{x_1}\Phi_{e_1}(\xi,x)\Big]-\zeta\Phi_{e_1}^{\beta_0}(\xi,x)\Delta\delta(x)e^{ 
\frac{1}{2}c_{e_1}\alpha_1t}\\
&\ -2\zeta\beta_0\Phi_{e_1}^{\beta_0-1}(\xi,x)\Big[\partial_\xi\Phi_{e_1}(\xi,x)\partial_{x_1}\delta(x)+\nabla_x\Phi_{e_1}
(\xi,x)\cdot \nabla\delta(x)\Big]e^{\frac{1}{2}c_{e_1}\alpha_1 t}\\
\geq&\ f(x,\Phi_{e_1}(\xi^+,x))-f(x,u^+(x,t))-\frac{1}{2}c_{e_1}^2\rho\alpha_1\partial_\xi\Phi_{e_1}(\xi^+,x)e^{\frac{1}
{2}c_{e_1}\alpha_1t}\\
&\ +\zeta\Phi_{e_1}^{\beta_0}(\xi,x)\delta(x)e^{\frac{1}{2}c_{e_1}\alpha_1t}\bigg[\frac{1}{2}c_{e_1}\alpha_1-2\beta_0(1- 
\beta_0) \frac{\vert\partial_\xi\Phi_{e_1}(\xi,x)\vert}{\Phi_{e_1}(\xi,x)} \frac{\vert\nabla_x\Phi_{e_1}(\xi,x)\vert}
{\Phi_{e_1}(\xi,x)}\\
&\ -\frac{\vert\Delta\delta(x)\vert}{\delta(x)}-2\beta_0 \frac{\vert\nabla\delta(x)\vert}{\delta(x)} \frac{\vert\partial_
\xi\Phi_{e_1}(\xi,x)\vert}{\Phi_{e_1}(\xi,x)}-2\beta_0 \frac{\vert\nabla_x\Phi_{e_1}(\xi,x)\vert}{\Phi_{e_1}(\xi,x)} \frac
{\vert\nabla\delta(x)\vert}{\delta(x)} \bigg]\\
\geq&\ f(x,\Phi_{e_1}(\xi^+,x))-f(x,u^+(x,t))-\frac{1}{2}c_{e_1}^2\rho\alpha_1\partial_\xi\Phi_{e_1}(\xi^+,x)e^{\frac{1}
{2}c_{e_1}\alpha_1t}.
\end{align*}
We consider the three cases in the sequel: $ \xi^+(x,t)\geq C$, $\xi^+(x,t)\leq -C$ and $-C\leq\xi^+(x,t)\leq C$, where 
$C$ is given by \eqref{CM0}.
	
{\bf Case 1: $\xi^+(x,t)\geq C$.} It follows from \eqref{CM0} and  $t\leq T\leq\frac{2}{c_{e_1}\alpha_1}\ln {\frac
{\theta_1}{2\zeta\Vert\delta(x)\Vert_{L^\infty(\mathbb{R}^N)}}}$ that
$$0<\Phi_{e_1}(\xi^+,x)<u^+(x,t)\leq \theta_1,$$
so we get that $f(x,\Phi_{e_1}(\xi^+,x))=f(x,u^+(x,t))=0$ by \eqref{eta1theta1}. Since $\partial_\xi\Phi_{e_1}(\xi^+,x)
<0$, we have $\mathcal{L} [u^+](x,t)\geq 0$.
	
{\bf Case 2: $\xi^+(x,t)\leq -C$.} It follows from \eqref{CM0} and  $t\leq T\leq \frac{2}{c_{e_1}\alpha_1}\ln {\frac
{\theta_1}{2\zeta\Vert\delta(x)\Vert_{L^\infty(\mathbb{R}^N)}}}$ that
$$1-\frac{1}{2}\theta_1\leq\Phi_{e_1}(\xi^+,x))<u^+(x,t)\leq 1+\frac{1}{2}\theta_1.$$
Since $f_u(x,u)\leq \eta_1<0$ for $(x,u)\in\mathbb{R}^N\times[1-\theta_1,1+\theta_1]$ and $\partial_\xi\Phi_{e_1}(\xi^+,x)
<0$, it follows that $\mathcal{L} [u^+](x,t)\geq 0$.

{\bf Case 3: $ -C\leq \xi^+(x,t)\leq C $.}  the mean value theorem yields
$$f(x,\Phi_{e_1}(\xi^+,x))-f(x,u^+(x,t))\geq-M\zeta\Vert\delta(x)\Vert_{L^\infty(\mathbb{R}^N)}e^{\frac{1}{2}c_{e_1}\alpha
_1t},$$
where $M>0$ is given by \eqref{M}. Then, by \eqref{CM0} and the choice of $\rho$ we can conclude that there holds
$\mathcal{L} [u^+](x,t)\geq 0$.
		
{\it Step 3: verification of the boundary condition.} Some computations imply that  
\begin{equation*}
\begin{split}
\nabla_x u^+(x,t) \cdot n(x)=&\ \partial_\xi\Phi_{e_1}(\xi^+,x)n_1(x)+\nabla_x\Phi_{e_1}(\xi^+,x)\cdot n(x)\\
&\ +\zeta\beta_0\Phi_{e_1}^{\beta_0-1}(\xi,x)\delta(x)e^{\frac{1}{2}c_{e_1}\alpha_1t}[\partial_\xi\Phi_{e_1}(\xi,x) n_1
(x)\\
&\ +\nabla_x\Phi_{e_1}(\xi,x)\cdot n(x)]+\zeta\Phi_{e_1}^{\beta_0}(\xi,x)e^{\frac{1}{2}c_{e_1}\alpha_1t}\nabla\delta(x)
\cdot n(x),
\end{split}
\end{equation*}
where $n(x)$ represents the unit outward normal to $\partial\Omega$ at $x$, and $n_1(x)$ denotes the first component of 
$n(x)$.	
By the choice of $T$ and $\beta_0\leq 
\min\left\{\frac{1}{2},\frac{1}{8c_{e_1}\Vert\delta(x)\Vert_{L^\infty(\mathbb{R}^N)}}\right\}$, one infers that for all 
$x \in \partial\Omega$ and $t\leq T$,
\begin{align*}
\nabla_x u^+(x,t)\cdot n(x)\geq&\ -\frac{\vert\partial_\xi\Phi_{e_1}(\xi^+,x)\vert}{\Phi_{e_1}(\xi,x)}\Phi_{e_1}^{1-
\beta_0}(\xi,x)\Phi_{e_1}^{\beta_0}(\xi,x)-\frac{\vert\nabla_x\Phi_{e_1}(\xi^+,x)\vert}{\Phi_{e_1}(\xi,x)}\Phi_{e_1}^{1-
\beta_0}( \xi,x)\times\\
&\ \Phi_{e_1}^{\beta_0}(\xi,x)-\zeta\beta_0\Vert\delta(x)\Vert_{L^\infty(\mathbb{R}^N)}\frac{\vert\partial_\xi\Phi_{e_1}
(\xi,x)\vert}{\Phi_{e_1}(\xi,x)}\Phi_{e_1}^{\beta_0}(\xi,x)e^{\frac{1}{2}c_{e_1}\alpha_1t}\\
&\ -\zeta\beta_0\Vert\delta(x)\Vert_{L^\infty(\mathbb{R}^N)}\frac{\vert\nabla_x\Phi_{e_1}(\xi,x)\vert}{\Phi_{e_1}(\xi,x)}
\Phi_{e_1}^{\beta_0}(\xi,x)e^{\frac{1}{2}c_{e_1}\alpha_1t}\\
&\ +\zeta\Phi_{e_1}^{\beta_0}(\xi,x)e^{\frac{1}{2}c_{e_1}\alpha_1t}\\
\geq &\ \zeta\Phi_{e_1}^{\beta_0}(\xi,x)e^{\frac{1}{2}c_{e_1}\alpha_1t}-4c_{e_1}A_1^{1-\beta_0}e^{-\alpha_1(1-\beta_0)\xi}
\Phi_{e_1}^{\beta_0}(\xi,x)\\
&\ -4c_{e_1}\zeta\beta_0\Vert\delta(x)\Vert_{L^\infty(\mathbb{R}^N)}\Phi_{e_1}^{\beta_0}(\xi,x)e^{\frac{1}{2}c_{e_1}\alpha
_1t}\\
\geq &\  \zeta\Phi_{e_1}^{\beta_0}(\xi,x)e^{\frac{1}{2}c_{e_1}\alpha_1t}-4c_{e_1}A_1^{1-\beta_0}e^{\alpha_1(1-\beta_0)r}e^
{c_{e_1}\alpha_1(1-\beta_0)t}\Phi_{e_1}^{\beta_0}(\xi,x)\\
&\ -\frac{1}{2}\zeta\Phi_{e_1}^{\beta_0}(\xi,x)e^{\frac{1}{2}c_{e_1}\alpha_1 t}\\
=&\ \Phi_{e_1}^{\beta_0}(\xi,x)\left[\frac{1}{2}\zeta e^{\frac{1}{2}c_{e_1}\alpha_1t}-4c_{e_1}A_1^{1-{\beta_0}}e^{\alpha_1
(1-{\beta_0})r}e^{c_{e_1}\alpha_1(1-\beta_0)t}\right]\\
\geq &\ \Phi_{e_1}^{\beta_0}(\xi,x)\left[\frac{1}{2}\zeta e^{\frac{1}{2}c_{e_1}\alpha_1t}-4c_{e_1}A_1^{1-{\beta_0}}e^
{\alpha_1(1-{\beta_0})r}e^{\frac{1}{2}c_{e_1}\alpha_1t}\right]\\
=&\ \Phi_{e_1}^{\beta_0}(\xi,x)e^{\frac{1}{2}c_{e_1}\alpha_1t}\left[\frac{1}{2}\zeta -4c_{e_1}A_1^{1-{\beta_0}}e^{\alpha_1
(1-{\beta_0})r}\right].
\end{align*}
Therefore, there holds $\nabla_x u^+(x,t) \cdot n(x)\geq 0$. By a similar argument, one obtains that $\nabla_x u^-(x,t) 
\cdot n(x)\leq 0$. The proof is complete.
\end{proof}
	
\vspace{0.3cm}
\begin{proof}[\ \textbf{Proof of Theorem \bf{\ref{th2}}}] Take \(n\geq |T|+1\). Let $u_n(x,t)$ be the solution of problem 
\eqref{f1} with the initial value $u_n(x,-n)=u^+(x,-n)$, where $u^+(x,t)$ is given by Lemma \ref{lem5}. When $n$ is 
sufficiently large, we can conclude that
$$0\leq u_n(x,-n)=u^+(x,-n)\leq 1+\zeta\Vert\delta(x)\Vert_{L^\infty(\mathbb{R}^N)}e^{-\frac{1}{2}c_{e_1}\alpha_1 n}\leq 
1+\frac{1}{n}\ \ \text{for}\ x\in \overline{\Omega}.$$
Moreover,  by (H3)-(H4), it follows that $f(x,0)=0$ and $f(x,1+\frac{1}{n})<0$ for $x\in \mathbb{R}^N$. Therefore $0$ and 
$1+\frac{1}{n}$ can serve as a sub-solution and a super-solution of problem \eqref{f1}, respectively. By applying the 
comparison principle, we obtain that
$$0\leq u_n(x,t)\leq 1+\frac{1}{n}\ \ \text{for}\ x\in \overline{\Omega} \ \ \text{and}\ t> -n$$
when $n$ is sufficiently large.	Since $\Phi_{e_1}(\xi,x) $ is strictly decreasing with respect to $\xi$, it is easy to 
verify that
$$u^-(x,-n)\leq u_n(x,-n) \leq u^+(x,-n) \ \ \text{for all}\ x \in \overline{\Omega}.$$	
Then the comparison principle implies that
\begin{equation*}
u^-(x,t)\leq u_n(x,t) \leq u^+(x,t) \ \ \text{for all}\  x \in \overline{\Omega} \ \ \text{and}\  -n< t \leq T.
\end{equation*}
Setting $t = -n+1$ in the above inequality yields
$$u_n(-n+1, x) \leq u^+(-n+1, x) = u_{n-1}(-n+1, x).$$
Applying the comparison principle again, we obtain the following:
$$u_n(t, x) \leq u_{n-1}(t, x)\ \  \text{for all}\  x \in \overline{\Omega} \ \ \text{and}\  -n+1< t \leq T.$$
Furthermore, by applying standard parabolic estimates one obtains that as $n \rightarrow +\infty$, the solution sequence 
$u_n(x,t)$ converges to an entire solution $u(x,t)$ of problem \eqref{f1} locally uniformly for $(x,t)\in \overline
{\Omega}\times \mathbb{R}$. Then $u(x,t)$ satisfies
$$0\leq u(x,t) \leq 1 \ \ \text{for all}\  x \in \overline{\Omega} \ \ \text{and}\  t \in \mathbb{R} $$
and
\begin{equation}\label{qqq}
u^-(x,t)\leq u(x,t) \leq u^+(x,t) \ \text{ for all } x \in \overline{\Omega} \ \text{ and } t \leq T.
\end{equation}
By \eqref{qqq} one gets that
\begin{equation*}
\begin{aligned}
& \frac{\Phi_{e_1}(\xi^-,x)-\Phi_{e_1}(\xi,x)}{\Phi_{e_1}^{\beta_0}(\xi,x)}
-\zeta\delta(x)e^{\frac{1}{2}c_{e_1}\alpha_1t}\\
\leq & \frac{u(x,t)-\Phi_{e_1}(\xi,x)}{\Phi_{e_1}^{\beta_0}(\xi,x)}
\leq 
\frac{\Phi_{e_1}(\xi^+,x)-\Phi_{e_1}(\xi,x)}{\Phi_{e_1}^{\beta_0}(\xi,x)}
+\zeta\delta(x)e^{\frac{1}{2}c_{e_1}
\alpha_1t}
\end{aligned}
\end{equation*}
for all $x \in \overline{\Omega}$ and $t \leq T$. It then follows from the mean value theorem that
\begin{equation*}
\begin{split}
\frac{\Phi_{e_1}(\xi^+,x)-\Phi_{e_1}(\xi,x)}{\Phi_{e_1}^{\beta_0}(\xi,x)}\leq &\ M_1c_{e_1}\rho e^{\frac{1}{2}c_{e_1}
\alpha_1t} \frac{\Phi_{e_1}(\xi^++\tau(\xi-\xi^+),x)}{\Phi_{e_1}^{\beta_0}(\xi,x)}\\
\leq &\ M_1c_{e_1}\rho e^{\frac{1}{2}c_{e_1}\alpha_1t}\frac{\Phi_{e_1}(\xi^+,x)}{\Phi_{e_1}(\xi,x)},
\end{split}
\end{equation*}
where $0<\tau<1$ is a constant and $M_1$ is given by \eqref{M1M2}. Note that for all $x \in \mathbb{R}^N $ and  $t \in  
\mathbb{R}$, $\Phi_{e_1}(\xi-y,x)e^{-M_1y}$ is decreasing with respect to $y$. Hence
$$\frac{\Phi_{e_1}(\xi^+,x)}{\Phi_{e_1}(\xi,x)}\leq e^{M_1c_{e_1}\rho e^{\frac{1}{2}c_{e_1}\alpha_1t}} \ \ \text{for all}
\ x \in \overline{\Omega} \ \ \text{and}\ t \leq T,$$	
whence
\begin{equation}\label{b11}
\frac{\Phi_{e_1}(\xi^+,x)-\Phi_{e_1}(\xi,x)}{\Phi_{e_1}^{\beta_0}(\xi,x)}\leq M_1c_{e_1}\rho e^{\frac{1}{2}c_{e_1}\alpha_
1t}e^{M_1c_{e_1}\rho e^{\frac{1}{2}c_{e_1}\alpha_1t}} \ \ \text{for all}\ x \in\overline{\Omega} \ \ \text{and}\ t \leq T.
\end{equation}
Similarly, one has
\begin{equation}\label{b22}
\frac{\Phi_{e_1}(\xi^-,x)-\Phi_{e_1}(\xi,x)}{\Phi_{e_1}^{\beta_0}(\xi,x)}\geq -M_1c_{e_1}\rho e^{\frac{1}{2}c_{e_1}\alpha
_1t}\ \ \text{for all}\	x \in \overline{\Omega} \ \ \text{and}\ t \leq T.
\end{equation}
Letting $t \rightarrow -\infty$ in \eqref{b11} and \eqref{b22}, we derive that 
$$\frac{u(x,t)-\Phi_{e_1}(\xi,x)}{\Phi_{e_1}^{\beta_0}(\xi,x)}\rightarrow 0 \ \ \text{uniformly for}\ x\in \overline
{\Omega},$$
since $0<\Phi_{e_1}(\xi,x)<1$, \eqref{uxt1} holds.
		
Finally, $u_t(x,t)>0$ can be proved as that of  \cite[Theorem 3.2]{F1}. Since $ 0\leq u(x,t)\leq 1$ and $u_t(x,t)> 0$ for 
all $x\in \overline{\Omega}$ and $t\in \mathbb{R}$, it follows that $0< u(x,t)<1$ for all $x\in \overline{\Omega}$ and 
$t\in \mathbb{R}$. The proof is complete.
\end{proof}
	
\section{Long-time behavior of entire solutions}\label{sec4}
	
In this section, we first prove that the entire solution given by Theorem \ref{th2} propagates completely. Then, by 
constructing the super- and sub-solutions of problem \eqref{f1} for $t\geq0$, we prove that the entire solution is a 
transition front connecting 0 and 1, with a global mean speed $c_{e}$ identical to that of the pulsating front. 
Furthermore, we show that it is trapped between two translates of the pulsating front as $t\rightarrow +\infty$. Finally, 
under an appropriate assumption, we prove that the entire solution recovers its profile as $t\rightarrow+\infty$. 	

By the assumptions {(H3)}-{(H4)}, there is a function $g \in C^1([0,1],\mathbb{R})$ satisfying  
\begin{equation}\label{g}
\begin{cases}
g(0)=g(\theta_g)=g(1)=0,\ g'(0)<0,\ g'(1)<0,\ \int_0^1g(s)ds>0,\\
g(s)<0\ \ \text{for}\ 0<s<\theta_g,\ g(s)>0\ \ \text{for}\ \theta_g<s<1,\\
g(s)\leq f(x,s)\ \ \text{for}\ (x,s)\in \mathbb{R}^N\times [0,1],
\end{cases}
\end{equation}
where $\theta<\theta_g<1$ is a constant. It can then be inferred from Fife and McLeod \cite{P1}  that the following 
equation
\begin{equation}\label{g1}
u_t=\Delta u+g(u),\ \ x\in \mathbb{R}^N,\ t\in \mathbb{R}
\end{equation}
admits a unique (up to shift) planar front $U(x\cdot e-c_gt)$ with the speed $c_g>0$, where $c_g$ is independent of the 
direction $e\in \mathbb{S}^{N-1}$. Here we may also take $e=e_1$. Then $U(x\cdot e-c_gt)=U(x_1-c_gt)$.

For any $0<\epsilon<1-\theta_g$, $x_0\in\mathbb{R}^N$ and $R>0$, let us consider the following problem
\begin{equation}\label{g2}
\begin{cases}
u_t=\Delta u+g(u),\ \ x\in\Omega,\ t>0,\\
\frac{\partial u}{\partial n}=0,\ \ x\in\partial\Omega,\ t>0
\end{cases}
\end{equation}
with the initial value 
\begin{equation}\label{g3}
u(x,0)=
\begin{cases}
1-\epsilon,\ \ x\in\Omega\cap B(x_0,R),\\
0,\ \ x\in\Omega\setminus B(x_0,R),
\end{cases}
\end{equation}
where $g(u)$ and $\theta_g$ are given by \eqref{g}. It follows from the work of Berestycki, Hamel and Matano  \cite[Lemma 
5.2]{H1} that the following  conclusion holds.
\begin{lemma}\label{lem4}
Let $0<\epsilon<1-\theta_g$, $u_{\epsilon,x_0,R}(x,t)$ be the solution of equation \eqref{g2}-\eqref{g3}. Then there 
exist real numbers $0<R_1<R_2<R_3$ and $T_g>0$ such that $R_2-R_1>\frac{c_gT_g}{4}$, and if $B(x_0,R_3) \subset \Omega$, 
then 
$$u_{\epsilon,x_0,R_1}(x,T_g)\geq 1-\epsilon \ \ \text{for all}\  x \in \overline{B(x_0,R_2)},$$ 
where $c_g$ is the speed of the  planar front corresponding to equation \eqref{g1}.
\end{lemma}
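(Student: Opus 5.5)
The statement is Lemma \ref{lem4}, which the paper attributes to \cite[Lemma 5.2]{H1}. The point is that once the ball $B(x_0,R_3)$ lies inside $\Omega$, the obstacle plays no role. I will prove it for the homogeneous bistable equation \eqref{g1} in the whole space and transfer the result to \eqref{g2}--\eqref{g3} by comparison.

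\emph{Step 1: a compactly supported expanding subsolution.} Since $\int_0^1 g>0$ and $g'(0),g'(1)<0$, I would use the classical radially symmetric subsolutions of the bistable equation (Aronson--Weinberger, Fife--McLeod). Fix $\epsilon$ and pick $\epsilon'\in(0,\epsilon)$. There is a radial $C^2$ function $w(|x|)$ with the following properties: it is supported in a ball of radius $R_1$ minus a little; it satisfies $\Delta w+g(w)\ge0$ where $w>0$; $w\le 1-\epsilon$; and $w$ equals roughly $1-\epsilon'$ near the center. Concretely, I would take a one-dimensional profile $\psi$ solving $\psi''+g(\psi)=0$ with $\psi$ rising from $0$ to a value in $(\theta_g,1)$. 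For $R_1$ large the curvature term $\frac{N-1}{r}\psi'$ is small, and it is absorbed by slightly lowering $g$ (the condition $\int_0^1 g>0$ leaves room). Then I would build the expanding family
\[
\underline u(x,t)=\max\{0,\;U(|x-x_0|-c't-R_1+\lambda)-q e^{-\mu t}\}\quad\text{or}\quad w\big(|x-x_0|-c' t\big),
\]
with $c'\in(c_g/2,c_g)$. Working in radial variables, this is a subsolution of \eqref{g1} for $t\in[0,T_g]$ as long as $R_1\gg1$: the curvature error $(N-1)U'/r$ is bounded by $C/R_1$, and this is compensated by the speed deficit $c_g-c'$ together with $U'<0$.

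\emph{Step 2: choice of constants.} First fix $R_1$ large so that Step 1 works and $\underline u(\cdot,0)\le u_{\epsilon,x_0,R_1}(\cdot,0)$. Then choose $T_g$ so that, at time $T_g$, the subsolution exceeds $1-\epsilon$ on $\overline{B(x_0,R_1+c'T_g-\lambda_0)}$. Here $\lambda_0$ is a fixed lag with $U\ge 1-\epsilon$ on $(-\infty,-\lambda_0]$, and the term $qe^{-\mu T_g}$ must already be small. Set $R_2=R_1+c'T_g-\lambda_0$. Since $c'>c_g/2$, we get $R_2-R_1>c_gT_g/4$ once $T_g$ is large. Finally set $R_3=R_2+c_gT_g+\Lambda$, with $\Lambda$ large.

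\emph{Step 3: handling the domain and the boundary.} This step is the main obstacle, because the solution of \eqref{g2} lives in $\Omega$ and not in $\mathbb{R}^N$. The subsolution above is supported in $B(x_0,R_1+c'T_g+\lambda)\subset B(x_0,R_3)\subset\Omega$ for $t\le T_g$. It therefore vanishes near $\partial\Omega$, so the Neumann condition is satisfied trivially, and the comparison principle on $\Omega\times[0,T_g]$ gives $u_{\epsilon,x_0,R_1}\ge\underline u$. Extending $\underline u$ by $0$ also keeps it a subsolution, because it is a maximum of two subsolutions. To make this work I must choose $\underline u$ with compact support and not the plain front $U$, which is positive everywhere. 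That is why I use the truncation $\max\{0,\cdot\}$ with $U(+\infty)$ replaced by a slightly negative level, obtained by perturbing $g$ so that $g(s)\ge0$ for small $s<0$. Combining Steps 2 and 3 gives $u_{\epsilon,x_0,R_1}(x,T_g)\ge1-\epsilon$ on $\overline{B(x_0,R_2)}$.
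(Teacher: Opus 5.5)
The paper gives no argument of its own here; it quotes \cite[Lemma 5.2]{H1}. Your strategy is a legitimate self-contained alternative: an explicit compactly supported, radially expanding subsolution that stays inside $B(x_0,R_3)\subset\Omega$ on $[0,T_g]$, so that neither the obstacle nor the Neumann condition ever enters. Your Step 3 is fine. The gap is in Steps 1--2, at the one point that carries the content of the lemma. You need the ordering $\underline u(\cdot,0)\le(1-\epsilon)\mathbf{1}_{B(x_0,R_1)}$ for an \emph{arbitrary} $\epsilon\in(0,1-\theta_g)$, and $1-\epsilon$ may be barely above $\theta_g$. You never use $\epsilon<1-\theta_g$, although the statement fails without it: for $1-\epsilon<\theta_g$ the solution stays below the decreasing solution of $\dot y=g(y)$, $y(0)=1-\epsilon$. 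Neither of your constructions achieves this ordering.

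For the stationary bump, the requirements $w\le 1-\epsilon$ and $w\approx 1-\epsilon'$ with $\epsilon'<\epsilon$ contradict each other. More seriously, set $G(s)=\int_0^s g$. Any radially nonincreasing, compactly supported $w\ge0$ with $\Delta w+g(w)\ge0$ satisfies $\frac{d}{dr}\big[\frac12 w'^2+G(w)\big]=w'(w''+g(w))\le-\frac{N-1}{r}w'^2\le0$. Hence $G(\max w)\ge0$, i.e. $\max w\ge\theta^*$, where $G(\theta^*)=0$ and $\theta^*\in(\theta_g,1)$. So no such $w$ exists when $1-\epsilon<\theta^*$. The same obstruction applies to a front of a lowered nonlinearity whose upper state is $\le 1-\epsilon$, since a positive speed needs a positive integral.

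For $\max\{0,U(\cdot)-qe^{-\mu t}\}$: because $U(-\infty)=1$, the ordering at the center forces $q\ge\epsilon$ up to an exponentially small error, so $q$ is not small. Your check, in which the speed deficit $(c_g-c')|U'|$ absorbs the curvature, does not control the $q$-terms. Where $U\approx1$, the value $U-qe^{-\mu t}$ ranges over $[1-\epsilon,1)$, so the usual bound $g(U)-g(U-qe^{-\mu t})\le-\kappa qe^{-\mu t}$ coming from $g'(1)<0$ is unavailable. In the transition zone, an order-one term such as $g(U)-g(U-q)$ need not be dominated by $(c_g-c')|U'|$, for instance when $c_g$ is small.

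What is missing is the mechanism that lifts the plateau from $1-\epsilon$ to near $1$; this is exactly where $g>0$ on $(\theta_g,1)$ has to be used. One way is as follows.
\begin{itemize}
\item Take $z=|x-x_0|-c't-R_1+\lambda+\sigma(1-e^{-\mu t})$ and $\underline u=\max\{0,U(z)-qe^{-\mu t}\}$ with $q\in[\epsilon,1-\theta_g)$.
\item Choose $\delta,\mu>0$ small so that $g(s-p)-g(s)\ge2\mu p$ for $s\in[1-\delta,1]$ and $0<p\le q$. This is possible since $q<1-\theta_g$, $g>0$ on $(\theta_g,1)$ and $g'(1)<0$, and it handles the zone $U\ge1-\delta$ with a net gain $\mu qe^{-\mu t}$.
\item Take $\sigma$ large so that $\sigma\mu e^{-\mu t}|U'|$ absorbs the $O(q)e^{-\mu t}$ error where $|U'|$ is bounded below.
\item Near the edge of the support, where $U$ and $U-qe^{-\mu t}$ are both small, use $g'(0)<0$.
\end{itemize}
Alternatively, first compare with $\dot y=g(y)$ on a large sub-ball to bring the solution above $1-\epsilon_0$ with $\epsilon_0$ small, and only then start a small-$q$ subsolution.

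A second, more technical point: the bound $(N-1)|U'|/r\le C/R_1$ holds only for $|x-x_0|\gtrsim R_1$. Since $U'<0$, $U(|x-x_0|-\cdots)$ has a concave conical tip at $x_0$ and is not a subsolution there. You need to smooth $|x-x_0|$ near the center. The resulting error is exponentially small in $R_1+c't$ and must be absorbed into the gain $\mu qe^{-\mu t}$ above. This requires $\mu$ small compared with $c'$ times the exponential rate of $U'$ at $-\infty$.
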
	
		
\begin{proof}[\ \textbf{Proof of Theorem \bf{\ref{th3}}}]
From Theorem \ref{th2}, one obtains that $0< u(x,t) <1 $ and $u_t(x,t)>0$ for all $x\in\overline{\Omega}$ and $t\in\mathbb
{R}$. Then, by standard parabolic estimates we can conclude that $u(x,t)$ converges to a solution $u_\infty(x)$ of the 
following problem
\begin{equation}\label{uinfty}
\begin{cases}
\Delta u+f(x,u)=0,\ \ x\in\Omega,\\
\frac{\partial u}{\partial n}=0,\ \ x\in\partial\Omega
\end{cases}
\end{equation}
locally uniformly for $x \in \overline{\Omega}$ as $t \rightarrow +\infty$ and $0<u_\infty(x)\leq 1$ for $x \in \overline
{\Omega}$.
		
{\it Step 1: proof of $\lim_{\vert x\vert \rightarrow +\infty}u_\infty(x)=1$.} Taking $0<\epsilon<1-\theta_g$, it follows 
from Theorem \ref{th2} that there exists $\widehat{T}<0$ such that 
$$\vert u(x,\widehat{T})-\Phi_{e_1}(x_1-c_{e_1}\widehat{T},x)\vert \leq  \frac{\epsilon}{2} \ \ \text{for all}\  x \in 
\overline{\Omega}.$$
Since $\Phi_{e_1}(\xi,x)$ is strictly decreasing in $\xi$, one gets that there exists $q_\epsilon>0$ such that
$$ u(x,\widehat{T})\geq \Phi_{e_1}(x_1-c_{e_1}\widehat{T},x)-\frac{\epsilon}{2}\geq 1-\epsilon \ \ \text{for all}\ x\in 
\{x\in\overline{\Omega}:x_1-c_{e_1}\widehat{T}\leq -q_\epsilon\}.$$
Let $u_{\epsilon,x_0,R}(x,t)$ be the solution of \eqref{g2}-\eqref{g3}, by Lemma \ref{lem4} there are real numbers $0<R_1
<R_2<R_3$ and $T_g>0$ such that $R_2-R_1>\frac{c_gT_g}{4}$, and if $B(x_0,R_3) \subset \Omega$, then
$$u_{\epsilon,x_0,R_1}(x,T_g)\geq 1-\epsilon \ \ \text{for all}\  x \in \overline{B(x_0,R_2)},$$
where $c_g$ is the speed of the  planar front corresponding to \eqref{g1}. Note that $K\subset B(0,r)$. For any point $y 
\in\overline{\Omega\setminus B(0,r+R_3-R_2)}$, there exists $k_y \in \mathbb{N}^+$ and $\{x^1,x^2,\cdots,x^k\}$ satisfying
$$B(x^1,R_1)\subset \{x\in\overline{\Omega}:x_1-c_{e_1}\widehat{T}\leq -q_\epsilon\}, B(x^i,R_3)\subset \Omega \ \ \text
{for}\ 1\leq i\leq k_y,$$
$$y \in B(x^{k_y},R_2),B(x^{i+1},R_1)\subset B(x^{i},R_2)\ \ \text{for}\  1\leq i\leq k_y-1.$$
By the choice of $x^1$, one has
$$u(x,\widehat{T})\geq 1-\epsilon \ \ \text{for}\  x\in B(x^1,R_1). $$
Since $g(u)\leq f(x,u)\ \text{ for }(x,u)\in \mathbb{R}^N\times [0,1]$, using the comparison principle one obtains that
$$u(x,t)\geq u_{\epsilon,x^1,R_1}(x,t) \ \ \text{for}\ x \in\overline{\Omega} \ \ \text{and}\ t>\widehat{T}.$$
Therefore, by Lemma \ref{lem4} there holds
$$u(x,\widehat{T}+T_g)\geq u_{\epsilon,x^1,R_1}(x,\widehat{T}+T_g)\geq 1-\epsilon \ \ \text{for}\ x \in B(x^1,R_2).$$
Since $B(x^2,R_1)\subset B(x^1,R_2)$, we can conclude that
$$u(x,\widehat{T}+T_g)\geq 1-\epsilon \ \ \text{for}\ x \in\overline{B(x^2,R_1)}.$$
Similarly, by reduction, one can conclude that
$$u(x,\widehat{T}+k_yT_g)\geq 1-\epsilon \ \ \text{for}\ x \in\overline{B(x^{k_y},R_2)}.$$
In particular, there holds $u(y,\widehat{T}+k_yT_g)\geq 1-\epsilon$.Since $u_t(x,t)>0$ for all $x \in \overline{\Omega}$ 
and $t \in \mathbb{R}$, we have
$$u_\infty(x)\geq 1-\epsilon \ \ \text{for all}\   x \in \overline{\Omega\setminus B(0,r+R_3-R_2)}.$$
Finally, together with $0< u_\infty(x) \leq 1 $ for all $x \in \overline{\Omega}$ and the arbitrariness of $\epsilon$, 
one obtains $\lim_{\vert x\vert \rightarrow +\infty}u_\infty(x)=1$.
		
{\it Step 2: proof of $u_\infty(x) \equiv 1$.}
Assume there is a point $x^* \in \overline{\Omega}$ such that $0<u_\infty(x^*)=\min_{x\in \overline{\Omega}}u_\infty(x)< 
1$. Define $v_\infty(x)=u_\infty(x)-u_\infty(x^*)$ for $x\in \overline{\Omega}$, then
$$v_\infty(x)\geq 0 \ \ \text{for}\  x\in \overline{\Omega}\ \ \text{and}\  v_\infty(x^*)=0.$$
By \eqref{uinfty}, we can compute
$$-\Delta v_\infty=f(x,u_\infty)-f(x,u_\infty(x^*))+f(x,u_\infty(x^*))\geq -Mv_\infty +f(x,u_\infty(x^*)) $$
for $x\in \Omega$, where $M$ is given by \eqref{M}. Then we immediately obtain that
\begin{equation*}
\begin{cases}
-\Delta v_\infty+Mv_\infty\geq 0,\ \ x\in\Omega,\\
\frac{\partial v_\infty}{\partial n}=0,\ \ x\in\partial\Omega.
\end{cases}
\end{equation*}
If $x^*\in\Omega$, then by the strong maximum principle, we have $v_\infty\equiv 0$ for $x\in\Omega$, which leads to a 
contradiction. If $x^*\in\partial\Omega$, then by Hopf's lemma one arrives at $\frac{\partial v_\infty}{\partial n}(x^*)
<0$, which contradicts $\frac{\partial v_\infty}{\partial n}=0$ for $x\in\partial\Omega$.

As a result, one has $u_\infty(x) \equiv 1$ for $x\in \overline{\Omega}$, which implies that
$$\lim_{t \rightarrow +\infty}u(x,t)=1 \ \ \text{locally uniformly for}\  x \in \overline{\Omega}.$$ 
This completes the proof.
\end{proof}
\vspace{0.3cm}
	
Let $\beta_1=\min\left\{\frac{1}{2},-\frac{\eta_1}{8M_1M_2}\right\}$. For any $\beta \in (0,\beta_1]$, by using the 
auxiliary function $\delta_0(x)$ given in Section \ref{sec3}, we define $\delta_\beta(x)=\delta_0(x)+C_\beta$, where 
$C_\beta$ is a constant chosen such that
\begin{equation}\label{deltabeta}
\begin{split}
\delta_\beta(x)&\geq 1, \\
\left\Vert\frac{\vert\nabla\delta_\beta(x)\vert}{\delta_\beta(x)}\right\Vert_{L^{\infty}(\mathbb{R}^N)}&\leq \min\left\{
\frac{c_{e_1}^2\beta}{64+100c_{e_1}},-\frac{\eta_1}{2(1+M_1+M_2)}\right\}, \\
\left\Vert\frac{\Delta\delta_\beta(x)}{\delta_\beta(x)}\right\Vert_{L^{\infty}(\mathbb{R}^N)}&\leq \min\left\{\frac{c_
{e_1}^2\beta}{64+100c_{e_1}},-\frac{\eta_1}{2(1+M_1+M_2)}\right\},
\end{split}
\end{equation}
where $\eta_1$ is given by \eqref{eta1theta1}, $M_1$ and $M_2$ are given by \eqref{M1M2}.
	
\begin{lemma}\label{lem6}
For any $\beta \in (0,\beta_1]$ and $0<\mu\leq \min\left\{\frac{c_{e_1}^2\beta}{64},-\frac{\eta_1}{4},c_{e_1}\alpha_2
\right\}$, there exists $\rho_0(\beta,\mu)>0 $, and for any $\rho\geq\rho_0(\beta,\mu)$ and $0<\zeta<\frac{\theta_1}
{2\Vert\delta_\beta(x)\Vert_{L^\infty(\mathbb{R}^N)}}$, there exists $\mathcal{T}_0(\beta,\rho,\zeta)>0$, such that for 
any $\mathcal{T}\geq \mathcal{T}_0(\beta,\rho,\zeta)$ the following functions
$$\overline{u}(x,t)=\Phi_{e_1}(\overline{\xi}(x,t),x)+\zeta\Phi_{e_1}^\beta(\overline{\xi}(x,t),x)\delta_\beta(x)e^{-\mu t}$$
and 
$$\underline{u}(x,t)=\Phi_{e_1}(\underline{\xi}(x,t),x)-\zeta\Phi_{e_1}^\beta(\underline{\xi}(x,t),x)\delta_\beta(x)e^{-\mu t}$$	
are the super-solution and sub-solution of problem \eqref{f1} for $x\in \overline{\Omega}$ and $t\geq 0$, respectively, 
where
$$\overline{\xi}(x,t)=x_1-c_{e_1}(t+\mathcal{T})-\rho \zeta(1-e^{-\mu t})$$ and
$$\underline{\xi}(x,t)=x_1-c_{e_1}(t+\mathcal{T})+\rho \zeta(1-e^{-\mu t})$$
with $\eta_1>0$, $\alpha_2>0$, $M_1>0$, $M_2>0$  given by \eqref{eta1theta1}, \eqref{A1},  \eqref{M1M2}, respectively.
\end{lemma}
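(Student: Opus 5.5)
We verify directly that $\mathcal{L}[\overline{u}]\geq 0$ in $\Omega\times[0,+\infty)$ and $\nabla\overline{u}\cdot n\geq 0$ on $\partial\Omega$, mirroring the proof of Lemma \ref{lem5}. The lower function $\underline{u}$ is handled symmetrically. Throughout we write $\overline{\xi}$ for $\overline{\xi}(x,t)$, note that $\overline{\xi}_t=-c_{e_1}-\rho\zeta\mu e^{-\mu t}$, and use \eqref{f3}.

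\textbf{Interior inequality.} A computation analogous to the one in Lemma \ref{lem5} gives
\begin{align*}
\mathcal{L}[\overline{u}]=&\ f(x,\Phi_{e_1}(\overline{\xi},x))-f(x,\overline{u})-\rho\zeta\mu e^{-\mu t}\partial_\xi\Phi_{e_1}(\overline{\xi},x)\big(1+\zeta\beta\Phi_{e_1}^{\beta-1}\delta_\beta e^{-\mu t}\big)\\
&\ +\zeta\Phi_{e_1}^\beta\delta_\beta e^{-\mu t}\Big[-\mu+\beta\frac{f(x,\Phi_{e_1})}{\Phi_{e_1}}+\beta(1-\beta)\frac{|\partial_\xi\Phi_{e_1}|^2+|\nabla_x\Phi_{e_1}|^2+2\partial_\xi\Phi_{e_1}\partial_{x_1}\Phi_{e_1}}{\Phi_{e_1}^2}+E\Big].
\end{align*}
Here $E$ collects the terms carrying $\nabla\delta_\beta/\delta_\beta$ and $\Delta\delta_\beta/\delta_\beta$, which are controlled through \eqref{M1M2} and \eqref{deltabeta}. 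The term $-\partial_\xi\Phi_{e_1}\cdot(\cdots)$ is nonnegative.

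We split into three regions.

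\begin{enumerate}
\item \emph{Region $\overline{\xi}\geq C_3$.} Here $\Phi_{e_1}\leq\theta_1/2$. Since $\zeta\delta_\beta<\theta_1/2$, we have $\overline{u}\leq\theta_1<\theta$, so both $f$ terms vanish. By \eqref{C3M3},
\[
|\partial_\xi\Phi_{e_1}|^2/\Phi_{e_1}^2\geq c_{e_1}^2/4, \qquad |\nabla_x\Phi_{e_1}|/\Phi_{e_1}\leq c_{e_1}/16,
\]
so the quadratic bracket is at least of order $\beta c_{e_1}^2/8$. This dominates $\mu\leq c_{e_1}^2\beta/64$ and the $\delta_\beta$ errors, which are at most $c_{e_1}^2\beta/(64+100c_{e_1})$ times bounded ratios.
\item \emph{Region $\overline{\xi}\leq -C_3$.} Both $\Phi_{e_1}$ and $\overline{u}$ lie in $[1-\theta_1,1+\theta_1]$. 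The mean value theorem gives $f(\Phi_{e_1})-f(\overline{u})\geq -\eta_1\zeta\Phi_{e_1}^\beta\delta_\beta e^{-\mu t}$. In this region $f(x,\Phi_{e_1})\geq 0$. The cross term $2\beta(1-\beta)|\partial_\xi\Phi||\nabla_x\Phi|/\Phi^2\leq 2\beta M_1M_2\leq -\eta_1/4$ because $\beta\leq\beta_1$. The $\delta_\beta$ errors are at most $-\eta_1/2$ by \eqref{deltabeta}, and $\mu\leq-\eta_1/4$. The total is then nonnegative, possibly after adjusting constants slightly; a tighter split of these constants may be needed.
\item \emph{Region $|\overline{\xi}|\leq C_3$.} The $f$ difference and the bracket are bounded below by $-K\zeta e^{-\mu t}$ for a constant $K$ independent of $\rho$. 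Since $-\partial_\xi\Phi_{e_1}\geq M_3$, the drift term contributes at least $\rho\zeta\mu M_3e^{-\mu t}$. Choosing $\rho_0=K/(\mu M_3)$ makes the sum nonnegative, and this $\rho_0$ depends only on $\beta,\mu$.
\end{enumerate}

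\textbf{Boundary condition (main obstacle).} On $\partial\Omega$ we have $\nabla\delta_\beta\cdot n=1$, so the $\delta$-part contributes $+\zeta\Phi_{e_1}^\beta e^{-\mu t}$. The remaining terms are
\[
\partial_\xi\Phi_{e_1}(\overline{\xi},x)n_1+\nabla_x\Phi_{e_1}\cdot n+\zeta\beta\Phi_{e_1}^{\beta-1}\delta_\beta e^{-\mu t}(\partial_\xi\Phi_{e_1}n_1+\nabla_x\Phi_{e_1}\cdot n).
\]
Because $K\subset B(0,r)$, for $t\geq0$ and $\mathcal{T}$ large, $\overline{\xi}\leq r-c_{e_1}(t+\mathcal{T})$ is very negative on $\partial\Omega$.

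\begin{itemize}
\item By \eqref{A1}, the first two terms are bounded by $2A_1e^{\alpha_2\overline{\xi}}\leq 2A_1e^{\alpha_2 r}e^{-c_{e_1}\alpha_2(t+\mathcal{T})}$. Since $\mu\leq c_{e_1}\alpha_2$ and $\Phi_{e_1}^\beta\geq 1/2$ there, this is at most $\frac14\zeta\Phi_{e_1}^\beta e^{-\mu t}$ once $e^{-c_{e_1}\alpha_2\mathcal{T}}$ is small relative to $\zeta$.
\item By \eqref{ddd}, the ratios $|\partial_\xi\Phi_{e_1}|/\Phi_{e_1}$ and $|\nabla_x\Phi_{e_1}|/\Phi_{e_1}$ tend to $0$ as $\overline{\xi}\to-\infty$. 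So for $\mathcal{T}$ large the third term is at most $\frac14\zeta\Phi_{e_1}^\beta e^{-\mu t}$.
\end{itemize}

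This yields $\mathcal{T}_0(\beta,\rho,\zeta)$. Balancing the exponential rate $\mu$ against the front's exponential approach to $1$ at the obstacle is exactly where the constraint $\mu\leq c_{e_1}\alpha_2$ is used. For $\underline{u}$, the same estimates give the reversed inequalities, with the boundary sign $\nabla\underline{u}\cdot n\leq 0$.
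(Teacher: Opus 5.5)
Your proposal follows the paper's proof essentially step by step. It uses the same split at $\pm C_3$ with \eqref{C3M3}, \eqref{deltabeta}, $\beta\le\beta_1$ and the two bounds on $\mu$; for $\overline{\xi}\le -C_3$ the shares $\frac14,\frac14,\frac12$ of $-\eta_1$ add up exactly, so the constants need no adjustment. It chooses $\rho_0(\beta,\mu)$ so that the drift $\rho\zeta\mu M_3e^{-\mu t}$ absorbs the region $|\overline{\xi}|\le C_3$, and it handles the boundary via \eqref{A1} and \eqref{ddd}, which is where $\mu\le c_{e_1}\alpha_2$ enters.

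One caveat applies, and the paper's ``similarly'' shares it. For $\underline{u}$ the drift factor is $1-\zeta\beta\Phi_{e_1}^{\beta-1}\delta_\beta e^{-\mu t}$, not $1+\zeta\beta\Phi_{e_1}^{\beta-1}\delta_\beta e^{-\mu t}$. This factor is only $\geq 1-\beta$ on $\{\underline{u}>0\}$ and becomes negative far ahead; there, for $\rho\zeta\mu$ large, one genuinely gets $\mathcal{L}[\underline{u}]>0$, although $\underline{u}<0$ at those points. So the lower function should be taken as $\max\{\underline{u},0\}$, and $\rho_0$ should be enlarged by the factor $(1-\beta)^{-1}$.
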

	
\begin{proof}[\ \textbf{Proof}]
{\it Step 1: choice of parameters.}  Fix $\beta \in (0,\beta_1]$. Take
$$\rho_0(\beta,\mu)=\frac{(M-\eta_1)\Vert\delta_\beta(x)\Vert_{L^\infty(\mathbb{R}^N)}}{M_3\mu}\ \ \text{and}\ \  \mathcal
{T}_0(\beta,\rho,\zeta)=\max\bigg\{\frac{r+\rho\zeta}{c_{e_1}},T',T''\bigg\},$$
where $\eta_1>0$, $M>0$ and $M_3>0$ are given by \eqref{eta1theta1}, \eqref{M}, \eqref{C3M3}, respectively. The following 
details the selection of $\mathcal{T}_0(\beta,\rho,\zeta)$. It follows from $\mathcal{T}\geq \frac{r+\rho\zeta}{c_{e_1}}$ 
that $r-c_{e_1}\mathcal{T}\leq 0$. Since $K \subset B(0,r)$, it holds $\overline{\xi}(x,t)=x_1-c_{e_1}(t+\mathcal{T})-
\rho \zeta(1-e^{-\mu t})\leq 0$ for all $x \in \partial\Omega $ and $t\geq 0$. From \eqref{A1} one has
$$\vert\partial_\xi\Phi_{e_1}(\overline{\xi},x)\vert\leq A_1e^{\alpha_2\overline{\xi}},\vert\nabla_x\Phi_{e_1}(\overline
{\xi},x)\vert\leq A_1e^{\alpha_2\overline{\xi}} \ \ \text{for all}\  x \in \partial\Omega\ \ \text{and}\  t\geq 0.$$
Additionally, by \eqref{ddd} one gets that
$$\frac{\partial_\xi\Phi_{e_1}(\xi,x)}{\Phi_{e_1}(\xi,x)}\rightarrow 0 \ \ \text{as}\  t\rightarrow +\infty\ \ \text
{uniformly for}\   x \in \partial\Omega \subset B(0,r) $$
and
$$\frac{\vert\nabla_x\Phi_{e_1}(\xi,x)\vert}{\Phi_{e_1}(\xi,x)}\rightarrow 0\ \ \text{as}\ t\rightarrow +\infty\ \ \text
{uniformly for}\  x \in \partial\Omega \subset B(0,r). $$
Hence there is a sufficiently large positive constant $T'$ such that when $\mathcal{T}\geq T'$, there holds
$$\frac{\vert\partial_\xi\Phi_{e_1}(\overline{\xi},x)\vert}{\Phi_{e_1}(\overline{\xi},x)},\ \frac{\vert\nabla_x\Phi_{e_1}
(\overline{\xi},x)\vert}{\Phi_{e_1}(\overline{\xi},x)} \leq \frac{1}{4\beta \Vert\delta_\beta(x)\Vert_{L^\infty(\mathbb
{R}^N)}},$$
for all $x \in \partial\Omega$ and $t\geq 0$. Even if it means increasing $T'$, one has that when $\mathcal{T}\geq T'$,
$$2A_1e^{\alpha_2r-c_{e_1}\mathcal{T}}\leq \frac{1}{2}\zeta\Phi_{e_1}^\beta(\overline{\xi},x)\ \ \text{for all}\ x \in 
\partial\Omega\ \ \text{and}\ t\geq 0.$$
By a similar argument, there exists a sufficiently large $T''>0$ such that the same estimates hold when replacing 
$\overline{\xi}(x,t)$ by $\underline{\xi}(x,t)$. 
	
{\it Step 2: verifying  of the differential inequalities.} Let us prove $\mathcal{L} [\overline{u}](x,t)\geq 0$ for $x\in 
\Omega$ and $t\geq0$. $\mathcal{L} [\underline{u}](x,t)\leq 0$ can be handled in a similar manner. After some direct 
computations, one obtains
\begin{align*}
\mathcal{L}[\overline{u}](x,t)=&\ f(x,\Phi_{e_1}(\overline{\xi},x))-f(x,\overline{u}(x,t))-\rho\zeta\mu\partial_\xi\Phi_{e_ 
1}(\overline{\xi},x)e^{-\mu t}\\
&\ -\rho\zeta^2\beta\mu\Phi_{e_1}^{\beta-1}(\overline{\xi},x)\partial_\xi\Phi_{e_1}(\overline{\xi},x)\delta_\beta(x)e^{-2 
\mu t}-\zeta\mu\Phi_{e_1}^\beta (\overline{\xi},x)\delta_\beta(x)e^{-\mu t}\\
&\ -\zeta\Phi_{e_1}^\beta(\overline{\xi},x)\Delta \delta_\beta(x)e^{-\mu t}+\zeta\beta\Phi_{e_1}^{\beta-1}(\overline{\xi},x)
\delta_\beta (x)e^{-\mu t}f(x,\Phi_{e_1}(\overline{\xi},x))\\
&\ +\zeta\beta(1-\beta)\Phi_{e_1}^{\beta-2}(\overline{\xi},x)\delta_\beta(x)e^{-\mu t}\Big[\vert\partial_\xi\Phi_{e_1}
(\overline{\xi},x)\vert^2+\vert\nabla_x\Phi_{e_1}(\overline{\xi},x)\vert^2 \\
&\ +2\partial_\xi\Phi_{e_1}(\overline{\xi},x)\partial_{x_1}\Phi_{e_1}(\overline{\xi},x)\Big]-2\zeta\beta\Phi_{e_1}^{\beta- 
1}(\overline{\xi},x)\Big[ \partial_\xi\Phi_{e_1}(\overline{\xi},x)\partial_{x_1}\delta_\beta(x)\\
&\ +\nabla_x\Phi_{e_1}(\overline{\xi},x)\cdot \nabla\delta_\beta(x)\Big]e^{-\mu t}\\
\geq&\ f(x,\Phi_{e_1}(\overline{\xi},x))-f(x,\overline{u}(x,t))-\rho\zeta\mu\partial_\xi\Phi_{e_1}(\overline{\xi},x)e^{- 
\mu t}\\
&\ +\zeta\Phi_{e_1}^\beta(\overline{\xi},x)\delta_\beta(x)e^{-\mu t}\bigg[\beta(1-\beta)\frac{\vert\partial_\xi\Phi_{e_1}
(\overline{\xi},x)\vert^2}{\Phi_{e_1}^2(\overline{\xi},x)}-\mu-2\beta(1-\beta) \frac{\vert\partial_\xi\Phi_{e_1}
(\overline{\xi},x)\vert}{\Phi_{e_1}(\overline{\xi},x)}\times\\
&\ \frac{\vert\nabla_x\Phi_{e_1}(\overline{\xi},x)\vert}{\Phi_{e_1}(\overline{\xi},x)}-\frac{\vert\Delta\delta_\beta(x) 
\vert}{\delta_\beta(x)}-2\beta\frac{\vert\nabla\delta_\beta(x)\vert}{\delta_\beta(x)} \frac{\vert\partial_\xi\Phi_{e_1}
(\overline{\xi},x)\vert}{\Phi_{e_1}(\overline{\xi},x)}-2\beta  \frac{\vert\nabla_x\Phi_{e_1}(\overline{\xi},x)\vert}{\Phi_
{e_1}(\overline{\xi},x)} \frac{\vert\nabla\delta_\beta(x)\vert}{\delta_\beta(x)} \bigg].
\end{align*}
In the sequel, we consider the three cases: $ \overline{\xi}(x,t)\geq C_3 $, $ \overline{\xi}(x,t)\leq -C_3 $ and $ -C_3
\leq\overline{\xi}(x,t)\leq C_3 $, where $C_3$ is given by \eqref{C3M3}.
		
{\bf Case 1: $\overline{\xi}(x,t)\geq C_3>C$.} By \eqref{CM0} and $0<\zeta<\frac{\theta_1}{2\Vert\delta_\beta(x)\Vert_
{L^\infty(\mathbb{R}^N)}}$, one derives that
$$0<\Phi_{e_1}(\overline{\xi},x)<\overline{u}(x,t)\leq \theta_1,$$
as a result, $f(x,\Phi_{e_1}(\overline{\xi},x))=f(x,\overline{u}(x,t))=0$, which yields
\begin{equation*}
\begin{split}
\mathcal{L}[\overline{u}](x,t)\geq&\ \zeta\Phi_{e_1}^\beta(\overline{\xi},x)\delta_\beta(x)e^{-\mu t}\bigg[ \beta(1-\beta)
\frac{\vert\partial_\xi\Phi_{e_1}(\overline{\xi},x)\vert^2}{\Phi_{e_1}^2(\overline{\xi},x)}-\mu-2\beta(1-\beta) \frac{
\vert\partial_\xi\Phi_{e_1}(\overline{\xi},x)\vert}{\Phi_{e_1}(\overline{\xi},x)}\times\\
&\ \frac{\vert\nabla_x\Phi_{e_1}(\overline{\xi},x)\vert}{\Phi_{e_1}(\overline{\xi},x)}-\frac{\vert\Delta\delta_\beta(x) 
\vert}{\delta_\beta(x)}-2\beta \frac{\vert\nabla\delta_\beta(x)\vert}{\delta_\beta(x)} \frac{\vert\partial_\xi\Phi_{e_1}
(\overline{\xi},x)\vert}{\Phi_{e_1}(\overline{\xi},x)}\\
&\ -2\beta \frac{\vert\nabla_x\Phi_{e_1}(\overline{\xi},x)\vert}{\Phi_{e_1}(\overline{\xi},x)} \frac{\vert\nabla\delta_
\beta(x)\vert}{\delta_\beta(x)} \bigg].
\end{split}
\end{equation*}
From \eqref{C3M3}, \eqref{deltabeta} and $\beta\leq \frac{1}{2}$, we can conclude that $\mathcal{L} [\overline{u}](x,t)
\geq 0$ due to $\mu\leq\frac{c_{e_1}^2\beta}{64}$.
		
{\bf Case 2: $\overline{\xi}(x,t)\leq -C_3<-C$.} By \eqref{CM0} one deduces that
$$1-\frac{1}{2}\theta_1\leq\Phi_{e_1}(\overline{\xi},x))<\overline{u}(x,t)\leq 1+\frac{1}{2}\theta_1.$$
Since $f_u(x,u)\leq \eta_1<0$ for $(x,u) \in \mathbb{R}^N\times[1-\theta_1,1+\theta_1]$, one has $ f(x,u_1)-f(x,u_2)\geq 
-\eta_1 (u_2-u_1)$ for all $x\in \mathbb{R}^N$ and $1-\theta_1\leq u_1\leq u_2\leq 1+\theta_1$, hence
\begin{equation*}
\begin{split}
\mathcal{L}[\overline{u}](x,t)\geq&\ \zeta\Phi_{e_1}^\beta(\overline{\xi},x)\delta_\beta(x)e^{-\mu t}\bigg[-\eta_1 -\mu-
2\beta(1-\beta) \frac{\vert\partial_\xi\Phi_{e_1}(\overline{\xi},x)\vert}{\Phi_{e_1}(\overline{\xi},x)} \frac{\vert\nabla_
x\Phi_{e_1}(\overline{\xi},x)\vert}{\Phi_{e_1}(\overline{\xi},x)}\\
&\ -\frac{\vert\Delta\delta_\beta(x)\vert}{\delta_\beta(x)}-2\beta \frac{\vert\nabla\delta_\beta(x)\vert}{\delta_\beta(x)} 
\frac{\vert\partial_\xi\Phi_{e_1}(\overline{\xi},x)\vert}{\Phi_{e_1}(\overline{\xi},x)}-2\beta  \frac{\vert\nabla_x\Phi_
{e_1}(\overline{\xi},x)\vert}{\Phi_{e_1}(\overline{\xi},x)} \frac{\vert\nabla\delta_\beta(x)\vert}{\delta_\beta(x)} 
\bigg].
\end{split}
\end{equation*}
It then follows from $\beta\leq \min\left\{\frac{1}{2},-\frac{\eta_1}{8M_1M_2}\right\}$, \eqref{M1M2}, \eqref{deltabeta} 
and $\mu\leq-\frac{\eta_1}{4}$ that $\mathcal{L} [\overline{u}](x,t)\geq 0$. 

{\bf Case 3: $-C_3\leq \overline{\xi}(x,t)\leq C_3 $.} It holds
$$f(x,\Phi_{e_1}(\overline{\xi},x))-f(x,\overline{u}(x,t))\geq-M\zeta\Phi_{e_1}^\beta(\overline{\xi},x)\delta_\beta(x)e^ 
{-\mu t},$$
where $M$ is given by \eqref{M}. Then, by \eqref{C3M3} one has
\begin{equation*}
\begin{split}
\mathcal{L}[\overline{u}](x,t)\geq&\ \zeta\Phi_{e_1}^\beta(\overline{\xi},x)\delta_\beta(x)e^{-\mu t}\bigg[\frac{M_3\rho 
\mu}{\Vert\delta_\beta(x)\Vert_{L^\infty(\mathbb{R}^N)}}-M -\mu-2\beta(1-\beta) \frac{\vert\partial_\xi\Phi_{e_1}
(\overline{\xi},x)\vert}{\Phi_{e_1}(\overline{\xi},x)}\times\\
&\ \frac{\vert\nabla_x\Phi_{e_1}(\overline{\xi},x)\vert}{\Phi_{e_1}(\overline{\xi},x)}-\frac{\vert\Delta\delta_\beta(x) 
\vert}{\delta_\beta(x)}-2\beta \frac{\vert\nabla\delta_\beta(x)\vert}{\delta_\beta(x)} \frac{\vert\partial_\xi\Phi_{e_1}
(\overline{\xi},x)\vert}{\Phi_{e_1}(\overline{\xi},x)}\\
&\ -2\beta \frac{\vert\nabla_x\Phi_{e_1}(\overline{\xi},x)\vert}{\Phi_{e_1}(\overline{\xi},x)} \frac{\vert\nabla\delta_
\beta(x)\vert}{\delta_\beta(x)} \bigg].
\end{split}
\end{equation*}
The choice of $\rho_0(\beta,\mu)$ implies that $\frac{M_3\rho \mu}{\Vert\delta_\beta(x)\Vert_{L^\infty(\mathbb{R}^N)}}
-M\geq -\eta_1$. Similar to Case 2, one arrives at $\mathcal{L} [\overline{u}](x,t)\geq0$.

{\it Step 3: verifying the boundary condition.} Some computations imply that
\begin{equation*}
\begin{split}
\nabla_x \overline{u}(x,t) \cdot n(x)=&\ \partial_\xi\Phi_{e_1}(\overline{\xi},x) n_1(x)+\nabla_x\Phi_{e_1}(\overline{\xi}
,x)\cdot n(x)\\
&\ +\zeta\beta\Phi_{e_1}^{\beta-1}(\overline{\xi},x)\delta_\beta(x)e^{-\mu t}[\partial_\xi\Phi_{e_1}(\overline{\xi},x) 
n_1(x)\\
&\ +\nabla_x\Phi_{e_1}(\overline{\xi},x)\cdot n(x)]+\zeta\Phi_{e_1}^\beta(\overline{\xi},x)e^{-\mu t}\nabla\delta(x)\cdot 
n(x).
\end{split}
\end{equation*}	
By the choice of $\mathcal{T}_0(\beta,\rho,\zeta)$, we can conclude that when $\mathcal{T}\geq\mathcal{T}_0(\beta,\rho,
\zeta)$,
\begin{equation*}
\begin{split}
\nabla_x \overline{u}(x,t) \cdot n(x)\geq&\ -2A_1e^{\alpha_2\overline{\xi}}-\zeta\beta\Vert\delta_\beta(x)\Vert_{L^\infty
(\mathbb{R}^N)}\frac{\vert\partial_\xi\Phi_{e_1}(\overline{\xi},x)\vert}{\Phi_{e_1}(\overline{\xi},x)}\Phi_{e_1}^\beta
(\overline{\xi},x)e^{-\mu t}\\
&\ -\zeta\beta\Vert\delta_\beta(x)\Vert_{L^\infty(\mathbb{R}^N)}\frac{\vert\nabla_x\Phi_{e_1}(\overline{\xi},x)\vert}{\Phi_ 
{e_1}(\overline{\xi},x)}\Phi_{e_1}^\beta(\overline{\xi},x)e^{-\mu t}+\zeta\Phi_{e_1}^\beta(\overline{\xi},x)e^{-\mu t}\\
\geq&\  \zeta\Phi_{e_1}^\beta(\overline{\xi},x)e^{-\mu t}-2A_1e^{\alpha_2r-c_{e_1}\mathcal{T}}e^{-c_{e_1}\alpha_2t}-\frac
{1}{2}\zeta\Phi_{e_1}^\beta(\overline{\xi},x)e^{-\mu t}\\
\geq& \ \frac{1}{2}\zeta\Phi_{e_1}^\beta(\overline{\xi},x)\left[e^{-\mu t}-e^{-c_{e_1}\alpha_2t}\right]
\end{split}
\end{equation*}
for all $x \in \partial\Omega$ and $t\geq 0$. Since $\mu\leq c_{e_1}\alpha_2$, we can get $\nabla_x \overline{u}(x,t) 
\cdot n(x)\geq 0$ for all $x \in \partial\Omega$ and $t\geq 0$. Similarly, we can get $\nabla_x \underline{u}(x,t) \cdot 
n(x)\leq 0$. The proof is complete.
\end{proof}
	
\begin{lemma}\label{lem7}
Let $u(x,t)$ be given by Theorem $\ref{th2}$. For any $0<\epsilon<1-\theta_g$, there exist $T_\epsilon>0$, $Q_\epsilon^1
>0$ and $Q_\epsilon^2>0$ such that for any $t\geq T_\epsilon$,
$$u(x,t)\leq \epsilon \ \ \text{for all}\   x\in\{x\in\overline{\Omega}:x_1-c_{e_1}t\geq Q_\epsilon^1\} $$
and
$$u(x,t)\geq 1-\epsilon \ \ \text{for all}\   x\in\{x\in\overline{\Omega}:x_1-c_{e_1}t\leq -Q_\epsilon^2\},$$
where $\theta_g$ is given by \eqref{g}.
\end{lemma}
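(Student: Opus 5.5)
The plan is to sandwich $u$, for large times, between the sub- and super-solutions of Lemma \ref{lem6}, restarted at a suitable positive time $t_0$. Fix $\beta\in(0,\min\{\beta_0,\beta_1\}]$, take $\mu$ as in Lemma \ref{lem6}, and choose $\zeta$ small with $0<\zeta<\frac{\theta_1}{2\Vert\delta_\beta\Vert_{L^\infty}}$, together with $\rho\ge\rho_0(\beta,\mu)$. The key point is that once $\underline u(x,t-t_0)\le u(x,t)\le \overline u(x,t-t_0)$ holds for $t\ge t_0$, the conclusions follow directly. Indeed, $\overline u\le \Phi_{e_1}(\overline\xi,x)+\zeta\Vert\delta_\beta\Vert_{L^\infty}\Phi_{e_1}^\beta(\overline\xi,x)$, and $\overline\xi$ differs from $x_1-c_{e_1}t$ by a bounded shift (at most $c_{e_1}(\mathcal T-t_0)+\rho\zeta$ in absolute value). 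Hence $u\le\epsilon$ when $x_1-c_{e_1}t\ge Q^1_\epsilon$. Symmetrically, $\underline u\ge\Phi_{e_1}(\underline\xi,x)-\zeta\Vert\delta_\beta\Vert_{L^\infty}$, so $u\ge1-\epsilon$ for $x_1-c_{e_1}t\le -Q^2_\epsilon$, provided $\zeta\Vert\delta_\beta\Vert_{L^\infty}<\epsilon/2$. The $\zeta e^{-\mu t}$ term can alternatively be absorbed by taking $T_\epsilon$ large.

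\textbf{Upper bound.} Theorem \ref{th2} gives $u\le u^+$ for $t\le T$, and also $u(x,t)\le \Phi_{e_1}(x_1-c_{e_1}t,x)+\frac{\zeta}{2}\Phi^{\beta}_{e_1}(x_1-c_{e_1}t,x)$ at some very negative time $t_1$ (using $\beta\le\beta_0$ and \eqref{uxt1}). I will work with the time-translate $\overline u(x,t-t_1)$ and need $\overline\xi(x,0)=x_1-c_{e_1}\mathcal T \le x_1-c_{e_1}t_1$. This requires $\mathcal T\ge\mathcal T_0$ while $t_1<0$, which conflicts with the sign, so I will instead write the super-solution with a general shift $x_1-c_{e_1}(t-t_1)$ replaced by $x_1-c_{e_1}t+h$. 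The only role of the large $\mathcal T$ in Lemma \ref{lem6} is to make $\overline\xi\le0$ and large negative on $\partial\Omega$, i.e.\ to ensure the obstacle is behind the front. So the cleanest route is to start at a large \emph{positive} time $t_0$ and first establish comparability at $t_0$. By monotonicity of $\Phi_{e_1}$ in $\xi$ and $u\le1$, at time $t_0$ we have $u(\cdot,t_0)\le\Phi_{e_1}(x_1-c_{e_1}(t_0+\mathcal T)-\rho\zeta,x)+\zeta\Phi^\beta_{e_1}\delta_\beta$ everywhere except possibly far ahead. Ahead of the front we need $u(x,t_0)\lesssim \Phi_{e_1}$, which is obtained by propagating the bound $u\le u^+$ forward in time with a whole-space comparison. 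Far ahead, the obstacle is irrelevant, and $u^+$ extended for $t\in[T,t_0]$ on the region $x_1\ge r+$const is controlled by the pulsating front shifted by a bounded amount; this uses finite speed of influence and $f=0$ below $\theta$. Concretely, I will compare $u$ on $\{x_1> r\}$ with $\Phi_{e_1}(x_1-c_{e_1}t-h,x)+$small correction, using $u\le1$ on the boundary $x_1=r$.

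\textbf{Lower bound.} By Theorem \ref{th3}, $u\to1$ locally uniformly, and by Step 1 of its proof $u(x,t)\ge1-\epsilon$ behind the front. For $t_0$ large we therefore have $u(x,t_0)\ge 1-\theta_1/2$ on $\{x_1-c_{e_1}t_0\le -q\}\cup B(0,r_1)$. Moreover $\underline u(x,0)\le \Phi_{e_1}(x_1-c_{e_1}\mathcal T+\rho\zeta,x)-\zeta\Phi^\beta\delta_\beta$, which is $\le0$ wherever $\Phi_{e_1}$ is tiny, because $\Phi^\beta\gg\Phi$ there (this is exactly what $\beta<1$ is for). Choosing $\mathcal T$ suitably relative to $t_0$ then gives $\underline u(\cdot,0)\le u(\cdot,t_0)$, and comparison yields the lower bound.

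The main obstacle is the upper-bound initialization: controlling $u$ by $\Phi_{e_1}$ plus a $\Phi_{e_1}^\beta$ correction ahead of the front at a positive time. This is needed because the obstacle's perturbation from negative times must not grow. I will try to handle it by comparing on the half-space region ahead of the obstacle, where $u^+$ from Lemma \ref{lem5} already controls $u$ up to time $T$.
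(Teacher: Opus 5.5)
\textbf{Upper bound: a sign error leaves it unproved.} You say that starting $\overline u(x,t-t_1)$ at a very negative time $t_1$ requires $x_1-c_{e_1}\mathcal T\le x_1-c_{e_1}t_1$, and that this ``conflicts with the sign'' because $\mathcal T\ge\mathcal T_0>0>t_1$. But that inequality is just $t_1\le\mathcal T$, which holds automatically. This is exactly the paper's argument:
\begin{itemize}
\item By \eqref{uxt1} with $\beta\le\beta_0$, there is $\mathcal T'<0$ with $u(x,\mathcal T')\le\Phi_{e_1}(x_1-c_{e_1}\mathcal T',x)+\zeta\Phi_{e_1}^{\beta}(x_1-c_{e_1}\mathcal T',x)$.
\item $\Phi_{e_1}$ and $\Phi_{e_1}^\beta$ decrease in $\xi$, $\mathcal T'<\mathcal T$ and $\delta_\beta\ge1$, so this right-hand side is at most $\overline u(x,0)$.
\item Lemma \ref{lem6} gives a super-solution of the full Neumann problem on $\overline\Omega$, and the equation is autonomous in $t$. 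Comparison therefore yields $u(x,t)\le\overline u(x,t-\mathcal T')$ for $t\ge\mathcal T'$.
\item The front of $\overline u(\cdot,t-\mathcal T')$ lies at most $c_{e_1}(\mathcal T-\mathcal T')+\rho\zeta$ ahead of $x_1=c_{e_1}t$, which is all the conclusion needs.
\end{itemize}
Placing the super-solution's front beyond the obstacle is what makes its Neumann inequality hold. It costs nothing, because a front placed further ahead lies above $u$.

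Having discarded this, you leave the upper bound to an unexecuted plan: a comparison on $\{x_1>r\}$ using $u\le1$ on $\{x_1=r\}$ and ``finite speed of influence''. As described, it does not work.
\begin{itemize}
\item Parabolic problems have no finite speed of propagation.
\item Since $\Phi_{e_1}<1$, a barrier built from a shifted $\Phi_{e_1}$ plus a small correction can dominate the boundary datum $1$ on $\{x_1=r\}$ only if its front stays well ahead of that plane at all times considered. That is exactly the configuration of Lemma \ref{lem6}, which already works on all of $\overline\Omega$ with the Neumann condition, so the half-space detour is unnecessary.
\end{itemize}
So, as written, the bound $u\le\epsilon$ on $\{x_1-c_{e_1}t\ge Q^1_\epsilon\}$ is not established.

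\textbf{Lower bound: right route, two details to fix.} You follow the paper: restart $\underline u$ at a large time, using that $\underline u(\cdot,0)\le0$ where $\Phi_{e_1}$ is small because $\beta<1$.
\begin{itemize}
\item The threshold $1-\theta_1/2$ is too weak. Where $\Phi_{e_1}\approx1$ one has $\underline u(x,0)\approx1-\zeta\delta_\beta(x)>1-\theta_1/2$. You need $u\ge1-\zeta$, and then $1-\zeta\ge\Phi_{e_1}-\zeta\Phi_{e_1}^\beta\delta_\beta$.
\item Keep $\mathcal T$ fixed and choose $q_2$ with $\underline u(x,0)\le0$ for $x_1\ge q_2$. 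Then $u\ge1-\zeta$ is needed only on the fixed half-space $\{x_1\le q_2\}$. It holds there after a finite time, by the chain of balls of Lemma \ref{lem4} away from the obstacle and by Theorem \ref{th3} near it.
\item Step 1 of the proof of Theorem \ref{th3} does not give $u(\cdot,t_0)\ge1-\epsilon$ on the moving set $\{x_1-c_{e_1}t_0\le-q\}$ with $q$ independent of $t_0$. Each step of the chain advances by at most $R_2-R_1$ in time $T_g$, and Lemma \ref{lem4} only guarantees $(R_2-R_1)/T_g>c_g/4$, not $\ge c_{e_1}$. So you should not tie $\mathcal T$ to $t_0$.
\end{itemize}
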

	
\begin{proof}[\ \textbf{Proof}]
Take $\beta\in(0,\min\{\beta_0,\beta_1\}]$ and define $\delta_\beta(x)$ satisfying \eqref{deltabeta}. For any fixed $0<
\epsilon<1-\theta_g$, by Lemma \ref{lem6}, for any $0<\mu\leq\min\left\{\frac{c_{e_1}^2\beta}{64},-\frac{\eta_1}{4},
c_{e_1}\alpha_2\right\}$, there exists $\rho_0(\beta,\mu)>0 $, and for any $\rho\geq\rho_0(\beta,\mu)$ and $0<\zeta<\min
\left\{\frac{\epsilon}{2\Vert\delta_\beta(x)\Vert_{L^\infty(\mathbb{R}^N)}},\frac{\theta_1}{2\Vert\delta_\beta(x)\Vert_ 
{L^\infty(\mathbb{R}^N)}}\right\}$, there exists $\mathcal{T}_0(\beta,\rho,\zeta)>0$, such that for any $\mathcal{T}_0
\geq \mathcal{T}(\beta,\rho,\zeta)$ the following functions
$$\overline{u}(x,t)=\Phi_{e_1}(\overline{\xi}(x,t),x)+\zeta\Phi_{e_1}^{\beta}(\overline{\xi}(x,t),x)\delta_\beta(x)e^{- 
\mu t}$$
and
$$\underline{u}(x,t)=\Phi_{e_1}(\underline{\xi}(x,t),x)-\zeta\Phi_{e_1}^{\beta}(\underline{\xi}(x,t),x)\delta_\beta(x)e^ 
{-\mu t}$$
are the super-solution and sub-solution of problem \eqref{f1} for all $x\in\overline{\Omega}$ and $t\geq0$, respectively, 
where
$$\overline{\xi}(x,t)=x_1-c_{e_1}(t+\mathcal{T})-\rho \zeta(1-e^{-\mu t}),$$
$$\underline{\xi}(x,t)=x_1-c_{e_1}(t+\mathcal{T})+\rho \zeta(1-e^{-\mu t}),$$
$\eta_1$, $\alpha_2$ and $M_1,M_2>0$ are given by \eqref{eta1theta1}, \eqref{M1M2} and \eqref{A1}. It follows from  
Theorem \ref{th2} that there exists $\mathcal{T}'<0$ such that
$$u(x,\mathcal{T}')\leq \Phi_{e_1}(x_1-c_{e_1}\mathcal{T}',x)+\zeta\Phi_{e_1}^{\beta}(x_1-c_{e_1}\mathcal{T}',x)\ \ 
\text{for all}\ x\in\overline{\Omega}.$$
Since $\Phi_{e_1}(\xi,x)$ is strictly decreasing in $\xi$, there holds
\begin{equation*}
\begin{split}
u(x,\mathcal{T}')&\ \leq \Phi_{e_1}(x_1-c_{e_1}\mathcal{T},x)+\zeta\Phi_{e_1}^{\beta}(x_1-c_{e_1}\mathcal{T},x)\\
&\ \leq\Phi_{e_1}(x_1-c_{e_1}\mathcal{T},x)+\zeta\Phi_{e_1}^{\beta}(x_1-c_{e_1}\mathcal{T},x){\delta_\beta}(x)\\
&\ =\overline{u}(x,0)
\end{split}
\end{equation*}
for all $x\in\overline{\Omega}$. It then follows from the comparison principle that
\begin{equation}\label{fdf}
u(x,t)\leq \overline{u}(x,t-\mathcal{T}')\ \ \text{for all}\ x\in\overline{\Omega}\ \ \text{and}\ t\geq \mathcal{T}'.
\end{equation}
Since $\lim_{\xi \to +\infty}\Phi_{e_1}(\xi,x)=0$ uniformly for $x\in\mathbb{R}^N$, by \eqref{fdf} we can derive that 
there exists $Q_\epsilon^1>0$ sufficiently large such that for any $t> \mathcal{T}'$,
\begin{equation}\label{q1}
u(x,t)\leq \frac{\epsilon}{2}+\zeta\delta_\beta(x)\leq \epsilon \ \ \text{for all}\ x\in\{x\in\overline{\Omega}:x_1-
c_{e_1}t\geq Q_\epsilon^1\}.
\end{equation}
It follows from Theorem \ref{th2} that there exists $\widehat{T}<0$ such that
$$u(x,\widehat{T})\geq \Phi_{e_1}(x_1-c_{e_1}\widehat{T},x)-\frac{\zeta}{2}.$$
Since $\Phi_{e_1}(\xi,x)$ is strictly decreasing in $\xi$, there exists a real number $q_1>0$ such that
$$ \Phi_{e_1}(x_1 -c_{e_1}\widehat{T},x)\geq 1-\frac{\zeta}{2}\ \ \text{for all}\ x\in\{x\in\overline{\Omega}:x_1\leq 
-q_1\}.$$
Therefore
$$u(x,\widehat{T})\geq 1-{\zeta}\ \ \text{for all}\ x\in\{x\in\overline{\Omega}:x_1\leq -q_1\}.$$
By using Lemma \ref{lem4}, similar to the proof of Theorem \ref{th3}, we can prove that there exists $k\in\mathbb{N}^+$ 
such that
$$u(x,\widehat{T}+kT_g)\geq 1-\zeta \ \ \text{for all}\ x\in\overline{\Omega\setminus B(0,r+R_3-R_2)}\cap\{x\in\overline
{\Omega}:x_1\leq q_2\},$$
where $q_2>0$ is a sufficiently large constant such that $\underline{u}(x,0)\leq 0$ for all $x\in\{x\in\overline{\Omega}:
x_1\geq q_2\}$.
		
Additionally,  by Theorem \ref{th3}, one gets that there exists $\widehat{T}'>0$ such that
$$u(x,\widehat{T}')\geq 1-\zeta \ \ \text{for all}\ x\in\overline{ B(0,r+R_3-R_2)\setminus K}.$$
From the above estimates, we have
$$u(x,t)\geq 1-\zeta \ \ \text{for all}\  x\in \{x\in\overline{\Omega}:x_1\leq q_2\}\  \ \text{and}\ t\geq \max\{\widehat
{T}+kT_g,\widehat{T}'\}.$$
Therefore, by taking $\mathcal{T}''>\max\{\widehat{T}+kT_g,\widehat{T}',\mathcal{T}\}$, we have
\begin{equation*}
\begin{split}
u(x,\mathcal{T}'')\geq&\ 1-\zeta\\ 
\geq&\ \Phi_{e_1}(x_1-c_{e_1}\mathcal{T},x)-\zeta\Phi_{e_1}(x_1-c_{e_1}\mathcal{T},x)\\
\geq&\ \Phi_{e_1}(x_1-c_{e_1}\mathcal{T},x)-\zeta\Phi_{e_1}^{\beta}(x_1-c_{e_1}\mathcal{T},x)\delta_\beta(x)\\
=&\ \underline{u}(x,0)
\end{split}
\end{equation*}
for all $x\in \{x\in\overline{\Omega}:x_1\leq q_2\}$. 
In addition, for $x\in 
\{x\in\overline{\Omega}:x_1\geq q_2\}$, we 
have
$$u(x,\mathcal{T}'')\geq 0\geq\underline{u}(x,0),$$
hence
$$u(x,\mathcal{T}'')\geq\underline{u}(x,0)\ \ \text{for all}\ x\in\overline{\Omega}.$$
It follows from the comparison principle  that
\begin{equation}\label{lll}
u(x,t)\geq \underline{u}(x,t-\mathcal{T}'')\ \ \text{for all}\ x\in\overline{\Omega}\ \ \text{and}\ t\geq \mathcal{T}''.
\end{equation}
Since $\lim_{\xi \to -\infty}\Phi_{e_1}(\xi,x)=1$ uniformly for $x\in\mathbb{R}^N$, by \eqref{lll} we can derive that 
there exists $Q_\epsilon^2>0$ sufficiently large such that for any $t\geq \mathcal{T}''$,
\begin{equation}\label{q2}
u(x,t)\geq 1-\frac{\epsilon}{2}-\zeta\delta_\beta(x)\geq 1-\epsilon \ \ \text{for all}\ x\in\{x\in\overline{\Omega}:x_1-
c_{e_1}t\leq -Q_\epsilon^2\}.
\end{equation}
Combining \eqref{q1} and \eqref{q2}, by taking $T_\epsilon\geq\mathcal{T}''$, we get the desired result. The proof is 
complete.
\end{proof}
	
\vspace{0.3cm}
\begin{proof}[\ \textbf{Proof of Theorem \bf{\ref{th4}}}]
Based on Lemma \ref{lem7}, Theorem \ref{th4} can be proved similarly to that of \cite[Theorem 1.5]{F1}. The proof is 
complete.
\end{proof}
	
It follows from  \cite[Lemma 2.4]{F1} that the following lemma holds.
\begin{lemma}
Let $U(x,t)$ be a solution of equation \eqref{f2} satisfying
$$\Phi_{e_1}(x_1 - c_{e_1} t + \tau_1, x) \leq U( x,t) \leq \Phi_{e_1}(x_1 - c_{e_1} t + \tau_2, x)$$
for all $(x,t)\in\mathbb{R}^N\times\mathbb{R} $, where $\tau_1$ and $\tau_2$ are two constants satisfying $\tau_2\leq  0
\leq\tau_1$. Assume further that for any $\tau>0$, there exists $Q_\tau>0$ such that
\begin{equation*}
\Phi_{e_1}(x_1-c_{e_1}t+\tau,x)\leq U(x,t)\leq \Phi_{e_1}(x_1-c_{e_1}t-\tau,x)\ \ \text{for}\ (x,t)\in\mathbb{R}^N\times
\mathbb{R}\ \ \text{with}\  x_1-c_{e_1}t\geq Q_\tau.
\end{equation*}
Then $U(x,t)\equiv \Phi_{e_1}(x_1-c_{e_1}t,x)$ for all $(x,t)\in\mathbb{R}^N\times\mathbb{R} $.
\end{lemma}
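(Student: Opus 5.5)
The plan is a sliding argument in the moving variable $\xi=x_1-c_{e_1}t$, in the spirit of Berestycki and Hamel. Since $\Phi_{e_1}$ is decreasing in $\xi$ and $\tau_2\le 0\le\tau_1$, the hypothesis gives $U(x,t)\le \Phi_{e_1}(x_1-c_{e_1}t-|\tau_2|,x)$ everywhere. I set
$$\tau^*=\inf\bigl\{\tau\ge 0:\ U(x,t)\le \Phi_{e_1}(x_1-c_{e_1}t-\tau,x)\ \text{for all}\ (x,t)\in\mathbb{R}^N\times\mathbb{R}\bigr\}\in[0,|\tau_2|].$$
By continuity the infimum is attained. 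I will show $\tau^*=0$, which gives $U\le\Phi_{e_1}(x_1-c_{e_1}t,x)$. The symmetric argument, sliding from below starting at $\tau_1$, gives the reverse inequality, so $U\equiv\Phi_{e_1}(x_1-c_{e_1}t,x)$.

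Suppose $\tau^*>0$. I split the $(x,t)$-space according to $\xi$ into three regions. On $\{\xi\ge Q_{\tau^*/2}\}$ the second hypothesis gives $U\le \Phi_{e_1}(\xi-\tau^*/2,x)$, which is strictly below $\Phi_{e_1}(\xi-\tau,x)$ for $\tau\in[\tau^*/2,\tau^*]$. On $\{\xi\le -C'\}$, with $C'$ chosen so that $\Phi_{e_1}(\xi-\tau,x)$ and $U$ both lie in $[1-\theta_1,1]$, the stable zone \eqref{eta1theta1} applies. The remaining region $-C'\le\xi\le Q_{\tau^*/2}$ is bounded in $\xi$ but unbounded in $(x',t)$.

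First I claim that
$$\inf\bigl\{\Phi_{e_1}(\xi-\tau^*,x)-U(x,t):\ -C'\le \xi\le Q_{\tau^*/2}\bigr\}>0.$$
If not, take points $(x^n,t^n)$ in the middle region along which this difference tends to $0$. Write $x^n=k_nL+y^n$ with $k_n\in\mathbb{Z}^N$ and $y^n$ in a period cell. Shift time by $t^n$ and space by $k_nL$, and use the $L$-periodicity of $f$ and of $\Phi_{e_1}(\cdot,x)$ in $x$. The relevant $\xi$-values stay bounded, so after shifting $\xi$ by a bounded amount and passing to a subsequence, parabolic estimates give a limit entire solution $U_\infty$ of \eqref{f2}. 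This limit satisfies the same two hypotheses (up to a bounded shift of $\xi$, which I absorb), satisfies $U_\infty\le\Phi_{e_1}(\xi-\tau^*,x)$, and touches it at a finite point. Both functions solve \eqref{f2}, so the strong maximum principle applied to their difference (a linear parabolic inequality, using the Lipschitz bound $M$) forces equality for all earlier times. Equality then propagates to all $(x,t)$, since the $\xi$-range at earlier times covers everything. This contradicts $U_\infty\le\Phi_{e_1}(\xi-\tau^*/2,x)<\Phi_{e_1}(\xi-\tau^*,x)$ for $\xi$ large.

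Having the positive infimum, I choose $\varepsilon>0$ small so that $U\le\Phi_{e_1}(\xi-\tau^*+\varepsilon,x)$ on the middle region (by the uniform bound on $\partial_\xi\Phi_{e_1}$) and on $\{\xi\ge Q_{\tau^*/2}\}$. It remains to handle $\{\xi\le -C'\}$. Set $w=U-\Phi_{e_1}(\xi-\tau^*+\varepsilon,x)$. There $w$ is bounded, $w\le 0$ on $\xi=-C'$, and wherever $w>0$ it satisfies $w_t\le\Delta w+\eta_1 w$ because of \eqref{eta1theta1}. Suppose $\sigma=\sup w>0$. Take a maximizing sequence, recentre by lattice translations and time shifts, and pass to a limit. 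In the limit $w_\infty$ attains the value $\sigma$ at an interior point of $\{\xi<-C'\}$, so $(w_\infty)_t\ge0$ and $\Delta w_\infty\le0$ there, while the inequality gives $0\le\eta_1\sigma<0$, a contradiction. (Alternatively, a time-weighted comparison in the style of the proof of Lemma~\ref{lem6} also works.) Hence $w\le 0$ everywhere, so $\tau^*-\varepsilon$ is admissible, contradicting the minimality of $\tau^*$.

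I expect the main obstacle to be the compactness step: checking carefully that the translated limits preserve the hypotheses, in particular the uniform-in-$t$ control for $\xi\ge Q_\tau$, when translations by the lattice $\mathbb{Z}^NL$ are combined with arbitrary time shifts. The $\xi$-shift is only bounded, not zero, so the touching point must still land in a bounded $\xi$-window where the strong maximum principle argument applies.
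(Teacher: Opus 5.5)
Your sliding argument is correct, and it goes further than the paper does here. The paper gives no proof of this lemma. It imports it from \cite[Lemma 2.4]{F1}, a reference written for the bistable setting, and leaves the reader to check that the argument survives when $0$ is a degenerate rather than linearly stable state.

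Your proof makes that transfer explicit. Stability is used only on $\{\xi\le -C'\}$, through \eqref{eta1theta1}. On $\{\xi\ge Q_{\tau^*/2}\}$, where $f\equiv 0$, the far-field hypothesis takes the place of the missing stability of $0$. The bounded middle strip is handled by lattice recentring plus the strong maximum principle, which needs only the strict monotonicity of $\Phi_{e_1}$ in $\xi$. This is exactly the structure one needs in the combustion case, and it shows that the extra hypothesis in the statement is used nowhere else.

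One detail is worth writing out. In the region $\{\xi\le -C'\}$, a maximizing sequence for $w$ must have bounded $\xi$. The reason is that $U$ and $\Phi_{e_1}(\xi-\tau^*+\varepsilon,x)$ both tend to $1$ uniformly as $\xi\to-\infty$, so $w\to 0$ there. This is what guarantees that the recentred limit attains $\sigma>0$ at an interior point, instead of degenerating to $w_\infty\equiv 0$.
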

	
\begin{proof}[\ \textbf{Proof of Theorem \bf{\ref{th5}}}]
Take $\beta\in(0,\min\{\beta_0,\beta_1\}]$. Assume that there are some $\epsilon_0>0$ and a sequence $\{(x_n,t_n)\}\in 
\overline{\Omega}\times \mathbb{R}$ such that $t_n\rightarrow +\infty$ as $n\rightarrow +\infty$ and
\begin{equation}\label{ooo}
\left\vert\frac{u(x_n,t_n)-\Phi_{e_1}(x_{n1}-c_{e_1}t_n,x_n)}{\Phi_{e_1}^\beta(x_{n1}-c_{e_1}t_n,x_n)}\right\vert\geq 
\epsilon_0 \ \ \text{or all}\ n\in\mathbb{N},
\end{equation}
where $x_{n1}$ denotes the first component of $x_n$. If $\vert x_n \vert < +\infty$ as $n\rightarrow +\infty$, then by 
Theorem \ref{th4} and $\lim_{\xi \to -\infty}\Phi_{e_1}(\xi,x)=1$ (uniformly for $x\in\mathbb{R}^N$) we have
$$u(x_n,t_n),\Phi_{e_1}(x_{n1}-c_{e_1}t_n,x_n)\rightarrow 1\ \ \text{as}\ n\rightarrow +\infty,$$
which contradicts \eqref{ooo}. Therefore, it holds $\vert x_n \vert \rightarrow +\infty$ as $n\rightarrow +\infty$.	It 
can be discussed in a similar manner to conclude that the sequence $\xi_n:=x_{n1}-c_{e_1}t_n$ cannot diverge to 
$-\infty$. Consider the case $\xi_n  \rightarrow +\infty$ as $n\rightarrow +\infty$. By \eqref{fdf} and \eqref{lll} we 
have
\begin{align*}
&\ \frac{\Phi_{e_1}(x_{n1}-c_{e_1}(t_n+\mathcal{T}-\mathcal{T}'')+\rho\zeta (1-e^{-\mu(t_n-\mathcal{T}'')}),x_n)-\Phi_{e_1}
(x_{n1}-c_{e_1}t_n,x_n)}{\Phi_{e_1}^\beta(x_{n1}-c_{e_1}t_n,x_n)}\\
&\ -\frac{\zeta\Phi_{e_1}^\beta(x_{n1}-c_{e_1}(t_n+\mathcal{T}-\mathcal{T}'')+\rho\zeta (1-e^{-\mu(t_n-\mathcal{T}'')}),x_n)
\delta_\beta(x_n)e^{-\mu(t_n-\mathcal{T}'')}}{\Phi_{e_1}^\beta(x_{n1}-c_{e_1}t_n,x_n)}\\
\leq&\ \frac{u(x_n,t_n)-\Phi_{e_1}(x_{n1}-c_{e_1}t_n,x_n)}{\Phi_{e_1}^\beta(x_{n1}-c_{e_1}t_n,x_n)}\\
\leq&\ \frac{\Phi_{e_1}(x_{n1}-c_{e_1}(t_n+\mathcal{T}-\mathcal{T}')-\rho\zeta (1-e^{-\mu(t_n-\mathcal{T}')}),x_n)-\Phi_
{e_1}(x_{n1}-c_{e_1}t_n,x_n)}{\Phi_{e_1}^\beta(x_{n1}-c_{e_1}t_n,x_n)}\\
&\ +\frac{\zeta\Phi_{e_1}^\beta(x_{n1}-c_{e_1}(t_n+\mathcal{T}-\mathcal{T}')-\rho\zeta (1-e^{-\mu(t_n-\mathcal{T}')}),x_n)
\delta_\beta(x_n)e^{-\mu(t_n-\mathcal{T}')}}{\Phi_{e_1}^\beta(x_{n1}-c_{e_1}t_n,x_n)}.
\end{align*}
Then, by $\partial_\xi \Phi_{e_1}(\xi,x)<0$ one concludes that
\begin{align*}
&\ -\frac{\Phi_{e_1}(x_{n1}-c_{e_1}t_n,x_n)}{\Phi_{e_1}^\beta(x_{n1}-c_{e_1}t_n,x_n)}-\frac{\zeta\Phi_{e_1}^\beta(x_{n1}-c_ 
{e_1}(t_n+\mathcal{T}-\mathcal{T}''),x_n)\delta_\beta(x_n)e^{-\mu(t_n-\mathcal{T}'')}}{\Phi_{e_1}^\beta(x_{n1}-c_{e_1}t_n 
, x_n)}\\
\leq&\ \frac{u(x_n,t_n)-\Phi_{e_1}(x_{n1}-c_{e_1}t_n,x_n)}{\Phi_{e_1}^\beta(x_{n1}-c_{e_1}t_n,x_n)}\\
\leq&\ \frac{\Phi_{e_1}(x_{n1}-c_{e_1}(t_n+\mathcal{T}-\mathcal{T}')-\rho\zeta,x_n) }{\Phi_{e_1}^\beta(x_{n1}-c_{e_1}t_n,
x_n)}\\
&\ +\frac{\zeta\Phi_{e_1}^\beta(x_{n1}-c_{e_1}(t_n+\mathcal{T}-\mathcal{T}')-\rho\zeta ,x_n)\delta_\beta(x_n)e^{-\mu(t_n-
\mathcal{T}')}}{\Phi_{e_1}^\beta(x_{n1}-c_{e_1}t_n,x_n)}.
\end{align*}
Since $0<\beta<1$, letting $n\rightarrow +\infty$, we can get that
$$\frac{u(x_n,t_n)-\Phi_{e_1}(x_{n1}-c_{e_1}t_n,x_n)}{\Phi_{e_1}^\beta(x_{n1}-c_{e_1}t_n,x_n)}\rightarrow 0,$$
which contradicts \eqref{ooo}. Therefore, we obtain that $\xi_n  \rightarrow \xi^*\in\mathbb{R}$ as $n\rightarrow 
+\infty$.
		
Decompose $x_n$ into $\overline{x_n}+\widehat{x_n}$, where $\overline{x_n}\in \{(k_1L_1,\cdots,k_NL_N):k_1,\cdots,k_N\in
\mathbb{Z}\}$ and $\widehat{x_n}\in[0,L_1)\times\cdots \times[0,L_N)$. Without loss of generality, we assume that 
$\widehat{x_n}\rightarrow x^*\in [0,L_1]\times\cdots\times [0,L_N]$ as $n\rightarrow +\infty$. Let $s_n:=t_n+\frac{\xi^*-
x^*_1}{c_{e_1}}$ for all $n\in\mathbb{N}$. Define
$$u_n(x,t)=u(x+\overline{x_n},t+s_n)\ \ \text{for all}\ t\in\mathbb{R} \ \ \text{and}\ x\in\Omega-\{\overline{x_n}\}.$$
Note that $f(x,u)$ is $L$-periodic with respect to $x$.	Since $K=\mathbb{R}^N\setminus\Omega$ is a compact set and $\vert 
 x_n\vert\rightarrow +\infty$ as $n\rightarrow +\infty$, standard parabolic estimates yield that as $n\rightarrow+
\infty$, the sequence $u_n(x,t)$ converges to an entire solution $U(x,t)$ of equation \eqref{f2} locally uniformly for 
$(x,t)\in\mathbb{R}^N\times\mathbb{R}$.
		
Additionally, note that $\Phi_{e_1}(\xi,x)$ is $L$-periodic with respect to $x$. Then, by \eqref{fdf} and \eqref{lll} in 
the proof of Lemma \ref{lem7} we can derive that
\begin{equation}\label{unxt}
\begin{split}
&\ \Phi_{e_1}(x_1-c_{e_1}t+\overline{x_n}_1-c_{e_1}s_n+\tau''-\rho\zeta e^{-\mu(t+s_n-\mathcal{T}'')},x)\\
&\ -\zeta\Phi_{e_1}^\beta(x_1-c_{e_1}t+\overline{x_n}_1-c_{e_1}s_n+\tau''-\rho\zeta e^{-\mu(t+s_n-\mathcal{T}'')},x)\delta_
\beta(x+\overline{x_n})e^{-\mu(t+s_n-\mathcal{T}'')}\\
\leq&\ u_n(x,t)\\
\leq&\ \Phi_{e_1}(x_1-c_{e_1}t+\overline{x_n}_1-c_{e_1}s_n+\tau'+\rho\zeta e^{-\mu(t+s_n-\mathcal{T}')},x)\\
&\ +\zeta\Phi_{e_1}^\beta(x_1-c_{e_1}t+\overline{x_n}_1-c_{e_1}s_n+\tau'+\rho\zeta e^{-\mu(t+s_n-\mathcal{T}')},x)\delta_
\beta(x+\overline{x_n})e^{-\mu(t+s_n-\mathcal{T}')}\\
\end{split}
\end{equation}
for all $t\geq \mathcal{T}''-s_n$ and $x\in \Omega-\{\overline{x_n}\}$, where $\tau'=c_{e_1}(\mathcal{T}'-\mathcal{T})-
\rho\zeta$, $\tau''=c_{e_1}(\mathcal{T}''-\mathcal{T})+\rho\zeta$. Letting $n\rightarrow +\infty$ in \eqref{unxt}, we have
$$\Phi_{e_1}(x_1-c_{e_1}t+\tau'',x)\leq U(x,t)\leq \Phi_{e_1}(x_1-c_{e_1}t+\tau',x)$$
for all $(x,t)\in \mathbb{R}^N\times\mathbb{R}$. Since $u(x,t)$ satisfies the assumption \eqref{jkl}, we claim that for 
any $\tau>0$, there is $Q_\tau>0$ such that for any $t\in \mathbb{R}$,
\begin{equation}\label{bnm}
\Phi_{e_1}(x_1-c_{e_1}t+\tau,x)\leq u(x,t)\leq \Phi_{e_1}(x_1-c_{e_1}t-\tau,x)\ \ \text{for all}\ x\in\{x\in \overline
{\Omega}:x_1-c_{e_1}t\geq Q_\tau\}.
\end{equation}
		
The proof of claim \eqref{bnm} will be given later. Assume that there are some $\tau_0>0$ and a sequence $\{(x_n,t_n)\}
\subset\overline{\Omega}\times\mathbb{R}$ satisfying $x_{n1}-c_{e_1}t_n\rightarrow +\infty$ as $n\rightarrow +\infty$ 
such that for all $n\in\mathbb{R}$,
$$u(x_n,t_n)>\Phi_{e_1}(x_{n1}-c_{e_1}t_n-\tau_0,x_n)\ \ \text{or}\ u(x_n,t_n)<\Phi_{e_1}(x_{n1}-c_{e_1}t_n+\tau_0,x_n).$$
Without loss of generality, we assume that there holds
$$u(x_n,t_n)>\Phi_{e_1}(x_{n1}-c_{e_1}t_n-\tau_0,x_n)\ \ \text{for all}\ n\in\mathbb{N}$$
or
$$u(x_n,t_n)<\Phi_{e_1}(x_{n1}-c_{e_1}t_n+\tau_0,x_n)\ \ \text{for all}\ n\in\mathbb{N}.$$
For the former case, since $\Phi_{e_1}(\xi,x)$ is strictly decreasing in $\xi$, we can conclude that
$$\frac{u(x_n,t_n)-\Phi_{e_1}(x_{n1}-c_{e_1}t_n,x_n)}{\Phi_{e_1}(x_{n1}-c_{e_1}t_n,x_n)}>\frac{\Phi_{e_1}(x_{n1}-c_{e_1} 
t_n-\tau_0,x_n)-\Phi_{e_1}(x_{n1}-c_{e_1}t_n,x_n)}{\Phi_{e_1}(x_{n1}-c_{e_1}t_n,x_n)}>0$$
for all $n\in\mathbb{N}$. That is to say, for all $n\in\mathbb{N}$,
$$\left\vert\frac{u(x_n,t_n)}{\Phi_{e_1}(x_{n1}-c_{e_1}t_n,x_n)}-1\right\vert>\frac{\Phi_{e_1}(x_{n1}-c_{e_1}t_n-\tau_0, 
x_n)}{\Phi_{e_1}(x_{n1}-c_{e_1}t_n,x_n)}-1.$$
Letting $n\rightarrow+\infty$ in the above inequality, by \eqref{jkl} we have that the left side of the inequality 
converges to $0$, however, by Lemma \ref{lem1} one obtains that the right side of the inequality converges to $e^{c_{e_1}
\tau_0}-1>0$, which leads to a contradiction.
		
For the latter case, through a similar process we obtain a contradiction. Therefore, we prove the validity of \eqref{bnm}.
		
By \eqref{bnm}, since $\Phi_{e_1}(\xi,x)$ is $L$-periodic with respect to $x$, we have that for any $t\in \mathbb{R}$,
$$\Phi_{e_1}(x_1-c_{e_1}t+\overline{x_n}_1-c_{e_1}s_n+\tau,x)\leq u_n(x,t)\leq\Phi_{e_1}(x_1-c_{e_1}t+\overline{x_n}_1-
c_{e_1}s_n-\tau,x)$$
for all $x\in\{x\in \overline{\Omega}-\{\overline{x_n}\}:x_1-c_{e_1}t+\overline{x_n}_1-c_{e_1}s_n\geq Q_\tau\}$. By 
taking $n\rightarrow+\infty$, we can derive that 
$$\Phi_{e_1}(x_1-c_{e_1}t+\tau,x)\leq U(x,t)\leq \Phi_{e_1}(x_1-c_{e_1}t-\tau,x)$$
for all $(x,t)\in\mathbb{R}^N\times\mathbb{R}$ satisfying $x_1-c_{e_1}t\geq Q_\tau$. Then, it can be inferred from Lemma 
\eqref{lem7} that
$$U(x,t)\equiv \Phi_{e_1}(x_1-c_{e_1}t,x)\ \ \text{for all}\ (x,t)\in\mathbb{R}^N\times\mathbb{R}.$$
However, by \eqref{ooo} and the definition of $u_n(x,t)$, we have
$$\left\vert\frac{u_n(\widehat{x_n},t_n-s_n)-\Phi_{e_1}(\xi_n,\widehat{x_n})}{\Phi_{e_1}^\beta(\xi_n,\widehat{x_n})} 
\right\vert\geq\epsilon_0 \ \ \text{for all}\ n\in \mathbb{N}.$$
Letting $n \rightarrow +\infty$, we have
$$\left\vert\frac{U(x^*,\frac{-\xi^*+x^*_1}{c_{e_1}})-\Phi_{e_1}(\xi^*,x^*)}{\Phi_{e_1}^\beta(\xi^*,x^*)}\right\vert\geq 
\epsilon_0 ,$$
since $U(x,t)\equiv \Phi_{e_1}(x_1-c_{e_1}t,x)\ \text{ for all }(x,t)\in\mathbb{R}^N\times\mathbb{R}$, we obtain a 
contradiction. The proof is complete.
\end{proof}
	
\section{Uniqueness of entire solutions}\label{sec5}

In this section, we first prove that the entire solution will converge to the pulsating front at infinity in the 
direction perpendicular to the propagation direction $e_1$ locally uniformly for $(x,t)\in\mathbb{R}^N\times\mathbb{R}$. 
Then, under the assumption \eqref{jkl}, by using the sliding method, we prove the uniqueness of the entire solution 
satisfying  the conditions of Theorem \ref{th2}.
	
\begin{lemma}\label{lem8}
Let the entire solution $u(x,t)$ be given by Theorem \ref{th2}. Then for any sequence $\{x'_n\}\subset \mathbb{R}^{N-1}$ 
with $\vert x'_n\vert\rightarrow +\infty$ as $n\rightarrow +\infty$, there holds
\begin{equation}\label{mzm}
u(x_1,x'+x'_n,t)-\Phi_{e_1}(x_1-c_{e_1}t,x_1,x'+x'_n)\rightarrow 0 \ \ \text{as}\ n\rightarrow +\infty
\end{equation}
locally uniformly in $(x,t)\in\mathbb{R}^N\times\mathbb{R}$.
\end{lemma}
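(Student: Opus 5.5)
We use a compactness and Liouville-type argument in the spirit of the proof of Theorem \ref{th5}. The key point is that, after moving far away in the transverse directions $x'$, the obstacle disappears. The limiting problem is then equation \eqref{f2} on all of $\mathbb{R}^N$, and its solution is pinned to the front by the behavior at $t\to-\infty$ coming from Theorem \ref{th2}.

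First, since $|x_n'|\to+\infty$ and $K\subset B(0,r)$, for any compact set of $(x,t)$ the points $(x_1,x'+x_n')$ lie in $\Omega$ for $n$ large. We decompose $x_n'=\overline{x_n'}+\widehat{x_n'}$ with $\overline{x_n'}$ in the lattice generated by $(L_2,\dots,L_N)$ and $\widehat{x_n'}$ in the fundamental cell. Passing to a subsequence, $\widehat{x_n'}\to \hat x'$. Set $u_n(x,t)=u(x_1,x'+\overline{x_n'},t)$. By $L$-periodicity of $f$ and standard parabolic estimates, $u_n\to U$ locally uniformly (with derivatives), where $U$ is an entire solution of \eqref{f2} in $\mathbb{R}^N$. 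By periodicity of $\Phi_{e_1}$ in $x$, proving \eqref{mzm} reduces to showing $U(x,t)=\Phi_{e_1}(x_1-c_{e_1}t,x)$; the shift $\widehat{x_n'}\to\hat x'$ is then harmless by uniform continuity. Arguing by contradiction along an arbitrary subsequence, this yields convergence of the whole sequence.

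Second, we transfer bounds to $U$. From \eqref{qqq} we have $u^-\le u\le u^+$ for $t\le T$. The only $x$-dependence besides periodic terms is $\delta(x)$, which is bounded, and the corrections carry the factor $e^{\frac12 c_{e_1}\alpha_1 t}$. Passing to the limit gives, for $t\le T$,
\[
\Phi_{e_1}(\xi^-,x)-\zeta\|\delta\|_{\infty}\Phi_{e_1}^{\beta_0}(\xi,x)e^{\frac12 c_{e_1}\alpha_1 t}\le U(x,t)\le \Phi_{e_1}(\xi^+,x)+\zeta\|\delta\|_{\infty}\Phi_{e_1}^{\beta_0}(\xi,x)e^{\frac12 c_{e_1}\alpha_1 t}.
\]
Hence $\sup_x|U(x,t)-\Phi_{e_1}(x_1-c_{e_1}t,x)|\to0$ as $t\to-\infty$.

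Third, and this is the main step, we show that an entire solution of \eqref{f2} that converges uniformly to the pulsating front as $t\to-\infty$ must coincide with it. We apply a comparison in whole space using the super- and sub-solutions of Lemma \ref{lem6} without the obstacle. The boundary step is no longer needed, so we may take $\delta_\beta\equiv 1$; the proof of the differential inequalities then goes through verbatim.

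Fix small $\zeta>0$. Choose $s$ very negative so that $U(\cdot,s)$ is within $\zeta/2$ of $\Phi_{e_1}(x_1-c_{e_1}s,x)$. This uniform closeness does not directly give $U(\cdot,s)\le \Phi_{e_1}+\zeta\Phi_{e_1}^\beta$ where $\Phi_{e_1}$ is tiny. To handle that region, we use the explicit bounds above at time $s$: they give $U(x,s)\le \Phi_{e_1}(\xi^+,x)+C\Phi_{e_1}^{\beta_0}e^{\frac12c_{e_1}\alpha_1 s}$. With $\beta\le\beta_0$ and a small shift in $\xi$, this is dominated by $\Phi_{e_1}(\xi-h,x)+\zeta\Phi_{e_1}^\beta(\xi-h,x)$ for a shift $h=O(\rho e^{\frac12c_{e_1}\alpha_1 s})$. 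Comparison for $t\ge s$ then yields
\[
\Phi_{e_1}(x_1-c_{e_1}t+h+\rho\zeta,x)-\zeta\Phi_{e_1}^\beta\, e^{-\mu(t-s)}\le U(x,t)\le \Phi_{e_1}(x_1-c_{e_1}t-h-\rho\zeta,x)+\zeta\Phi_{e_1}^\beta\, e^{-\mu(t-s)}.
\]
Fix $(x,t)$ and let first $s\to-\infty$, so $h\to0$ and $e^{-\mu(t-s)}\to0$. Then let $\zeta\to0$. This gives $U(x,t)=\Phi_{e_1}(x_1-c_{e_1}t,x)$.

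The main obstacle is this matching of the initial data at time $s$ in the region $\xi\gg1$. There, uniform convergence alone is too weak, and the weighted estimate \eqref{uxt1} from Theorem \ref{th2} must be used. Concretely, the weighted bound must be carried into the limit $U$ along with the uniform one.
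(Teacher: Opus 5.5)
Your proposal is correct and follows essentially the same route as the paper: pass to the limit $U$ of transverse lattice translates, which solves \eqref{f2} on $\mathbb{R}^N$, then trap $U$ between the obstacle-free barriers of Lemma \ref{lem6} (with $\delta_\beta\equiv 1$) started at very negative times. You then let the starting time tend to $-\infty$ and afterwards $\zeta\to 0$, using that $\rho$ is independent of $\zeta$. The only cosmetic difference is that you match the data at time $s$ through the explicit bounds \eqref{qqq} and a small shift $h$, whereas the paper uses the weighted convergence \eqref{uxt1} carried over to the limit; these are equivalent, since \eqref{uxt1} is itself derived from \eqref{qqq}.
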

	
\begin{proof}[\ \textbf{Proof}]
Firstly, we prove that for any sequence $\{x'_n\}\subset \{(k_2L_2,\cdots,k_NL_N):k_2,\cdots,k_N\in\mathbb{Z}\}$ with 
$\vert x'_n\vert\rightarrow +\infty$ as $n\rightarrow +\infty$ and $\{y_n\}\subset [0,L_2)\times\cdots \times[0,L_N)$ 
with $ y_n\rightarrow y^*\in[0,L_2]\times\cdots \times[0,L_N]$ as $n\rightarrow +\infty$, there holds
\begin{equation}\label{ww}
u(x_1,x'+x'_n+y_n,t)-\Phi_{e_1}(x_1-c_{e_1}t,x_1,x'+x'_n+y_n)\rightarrow 0 \ \ \text{as}\ n\rightarrow +\infty
\end{equation}
locally uniformly in $(x,t)\in\mathbb{R}^N\times\mathbb{R}$. Define
$$v_n(x,t)=u(x_1,x'+x'_n,t)\ \ \text{for all}\ x\in\Omega-\{(0,x'_n)\}\ \ \text{and}\ t \in \mathbb{R}.$$	
Since $K$ is a compact set, from the standard parabolic estimates one gets that $v_n(x,t)$ converges to a solution 
$v(x,t)$ of equation \eqref{f2} locally uniformly for $(x,t)\in\mathbb{R}^N\times\mathbb{R}$ as $n\rightarrow+\infty$. 
In fact, if we can prove that $v(x,t)\equiv\Phi_{e_1}(x_1-c_{e_1}t,x)$, then \eqref{ww} is satisfied, since $u(x_1,x'+
x'_n+y_n,t)\rightarrow v(x_1,x'+y^*,t)$ as $n\rightarrow +\infty$ and $\Phi_{e_1}(x_1-c_{e_1}t,x_1,x'+x'_n+y_n)\rightarrow
\Phi_{e_1}(x_1-c_{e_1}t,x'+y^*)$ as $n\rightarrow +\infty$.
		
Next we prove that $v(x,t)\equiv\Phi_{e_1}(\xi,x)$ for $(x,t)\in \mathbb{R}^N\times\mathbb{R}$. Since $u(x_1,x'+x'_n,t)
\rightarrow v(x,t)$ and $\Phi_{e_1}(\xi,x)$ is $L$-periodic in $x$, it follows from \eqref{uxt1} that
$$\lim_{t \to -\infty} \frac{v(x,t)-\Phi_{e_1}(\xi,x)}{\Phi_{e_1}^\beta(\xi,x)}=0 \ \ \text{uniformly for}\ x \in 
\mathbb{R}^N \ \ \text{and}\ \beta \in (0,\min\{\beta_0,\beta_1\}].$$
This yields that for any $\zeta>0$, there exists $T_\zeta<0$ such that for any $x\in\mathbb{R}^N$,
\begin{equation}\label{ee}
\begin{split}
&\ \Phi_{e_1}(x_1-c_{e_1}T_\zeta,x)-\zeta\Phi_{e_1}^\beta(x_1-c_{e_1}T_\zeta,x)\\
\leq &\ v(x,T_\zeta)\leq \Phi_{e_1}(x_1-c_{e_1}T_\zeta,x)+\zeta\Phi_{e_1}^\beta(x_1-c_{e_1}T_\zeta,x).
\end{split}
\end{equation}
Taking $0<\zeta<\frac{\theta_1}{2\Vert \delta_\beta(x)\Vert_{L^\infty(\mathbb{R}^N)}}$, similar to the proof of Lemma 
\ref{lem6}, it is straightforward to show that there exist $\mu>0$ and $\rho>0$ such that  the following functions
$$v^+(x,t)=\Phi_{e_1}(\xi_1(x,t),x)+\zeta\Phi_{e_1}^\beta(\xi_1(x,t),x)e^{-\mu t}$$
and
$$v^-(x,t)=\Phi_{e_1}(\xi_2(x,t),x)+\zeta\Phi_{e_1}^\beta(\xi_2(x,t),x)e^{-\mu t}$$
are the super-solution and sub-solution of equation \eqref{f2} for all $x\in \mathbb{R}^N$ and $t\geq 0$, where
$$\xi_1(x,t)=x_1-c_{e_1}(t+T_\zeta)-\rho\zeta(1-e^{-\mu t}),$$
$$\xi_2(x,t)=x_1-c_{e_1}(t+T_\zeta)+\rho\zeta(1-e^{-\mu t}).$$
Then, by \eqref{ee} we get that
$$v^-(x,0)\leq v(x,T_\zeta)\leq v^+(x,0).$$
It follows from the comparison principle that
$$v^-(x,t-T_\zeta)\leq v(x,t)\leq v^+(x,t-T_\zeta)\ \ \text{for all}\ x\in \mathbb{R}^N \ \text{ and }t\geq T_\zeta.$$
Letting $T_\zeta\rightarrow -\infty$, one concludes that
$$\Phi_{e_1}(x_1-c_{e_1}t+\rho\zeta,x)\leq v(x,t)\leq \Phi_{e_1}(x_1-c_{e_1}t-\rho\zeta,x)\ \ \text{for all}\ x\in \mathbb
{R}^N \ \text{ and }t\in \mathbb{R},$$
since $\zeta$ can be arbitrarily small and is independent of $\rho$, we conclude that $v(x,t)\equiv\Phi_{e_1}(x_1-c_{e_1}
t,x)$ for $(x,t)\in \mathbb{R}^N\times\mathbb{R}$. This further implies \eqref{mzm}. The proof is complete.
\end{proof}
	
\vspace{0.3cm}
\begin{proof}[\ \textbf{Proof of Theorem \bf{\ref{th6}}}]
Since $u(x,t)$ and $\widehat{u}(x,t)$ satisfy  the conditions of Theorem \ref{th2}, by Theorem \ref{th4} one infers that 
there exists $B>0$ such that
\begin{equation*}
\begin{split}
u(x,t),\widehat{u}(x,t)\leq \theta_1\ \ \text{for all}\ (x,t)\in\{(x,t)\in\overline{\Omega}\times\mathbb{R}:x_1-c_{e_1}t
\geq B\},\\
u(x,t),\widehat{u}(x,t)\geq 1-\theta_1\ \ \text{for all}\ (x,t)\in\{(x,t)\in\overline{\Omega}\times\mathbb{R}:x_1-c_{e_1}t
\leq -B\},
\end{split}
\end{equation*}
where $\theta_1$ is given by \eqref{eta1theta1}. Since $u(x,t)$ and $\widehat{u}(x,t)$ are transition fronts connecting 
$0$ and $1$, by \cite[Theorem 1.2]{jjj} one gets that there exists $0<\theta_2\leq \theta_1$ such that
$$\theta_2\leq u(x,t),\widehat{u}(x,t)\leq 1-\theta_2\ \ \text{for all}\ (x,t)\in\{(x,t)\in\overline{\Omega}\times\mathbb
{R}:\vert x_1-c_{e_1}t\vert \leq B\}.$$
Moreover, we infer from Theorem \ref{th4} that there exists a sufficiently large constant $t_0>0$ such that
\begin{equation*}
\begin{split}
\widehat{u}(x,t-t_0)\leq \theta_2\leq u(x,t)\ \ \text{for all}\ (x,t)\in\{(x,t)\in\overline{\Omega}\times\mathbb{R}:\vert 
x_1-c_{e_1}t\vert \leq B\},\\
\widehat{u}(x,t-t_0)\leq 1-\theta_2\leq u(x,t)\ \ \text{for all}\ (x,t)\in\{(x,t)\in\overline{\Omega}\times\mathbb{R}: 
\vert x_1-c_{e_1}(t-t_0)\vert \leq B\}.
\end{split}
\end{equation*}
Combining the above formulas, we have
\begin{equation*}
\widehat{u}(x,t-t_0)\leq u(x,t)\ \ \text{for all}\ (x,t)\in\{(x,t)\in\overline{\Omega}\times\mathbb{R}:-B-c_{e_1}t_0\leq 
x_1-c_{e_1}t \leq B\}.
\end{equation*}
Using \eqref{eta1theta1} and following the argument of  \cite[Lemma 4.2]{jjj}, we can apply the sliding method to prove 
that
\begin{equation}\label{nnn}
\widehat{u}(x,t-t_0)\leq u(x,t)
\end{equation}
for all $(x,t)\in\{(x,t)\in\overline{\Omega}\times\mathbb{R}: x_1-c_{e_1}t\leq-B-c_{e_1}t_0\ \text{ or } x_1-c_{e_1}t\geq 
 B\}$. In conclusion, we have that for any $t_1\geq t_0$,
$$\widehat{u}(x,t-t_1)\leq \widehat{u}(x,t-t_0)\leq u(x,t)\ \ \text{for all}\ (x,t)\in\overline{\Omega}\times\mathbb{R}.$$
		
Define
\begin{equation}\label{vvv}
t_*:=\inf\{t_2\in\mathbb{R}:\widehat{u}(x,t-t_2)\leq u(x,t)\ \ \text{for all}\ (x,t)\in\overline{\Omega}\times\mathbb
{R}\}.
\end{equation}
Note that $t_*\leq t_0$. Next we prove that $t_*\geq0$.	If $t_*<0$, we take a sequence $\{(x_n,t_n)\}\subset\overline
{\Omega}\times\mathbb{R}$ satisfying $x_n=x_*-nL$, $t_n\rightarrow -\infty$ and $x_{n1}-c_{e_1}t_n\rightarrow \xi_*\in
\mathbb{R}$ as $n\rightarrow +\infty$, where $x^*\in [0,L_1]\times\cdots\times[0,L_N]$ is a constant vector and $nL=(nL_1,
\cdots,nL_N)$. Then, by Theorem \ref{th2} we have
$$\lim_{n \to +\infty}[\widehat{u}(x_n,t_n-t_*)-u(x_n,t_n)]=\Phi_{e_1}(\xi_*+c_{e_1}t_*,x_*)-\Phi_{e_1}(\xi_*,x_*)>0,$$
which leads to a contradiction. 
		
We assume further that $t_*>0$. By the definition of $t_*$ in \eqref{vvv}, one gets that there are the following two 
cases may occur:
$$\inf\{u(x,t)-\widehat{u}(x,t-t_*):(x,t)\in\overline{\Omega}\times\mathbb{R}\ \ \text{satisfying}\ -B-c_{e_1}t_*\leq x_
1-c_{e_1}t \leq B\}>0$$
or
$$\inf\{u(x,t)-\widehat{u}(x,t-t_*):(x,t)\in\overline{\Omega}\times\mathbb{R}\ \ \text{satisfying}\ -B-c_{e_1}t_*\leq x_
1-c_{e_1}t \leq B\}=0.$$
		
For the former case, there exists a real number $0<t_{**}\leq t_*$ such that
$$\widehat{u}(x,t-(t_*-t_{**}))\leq u(x,t)\ \ \text{for all}\ (x,t)\in\Omega\times\mathbb{R}\ \ \text{satisfying}\ -B-
c_{e_1}t_*\leq x_1-c_{e_1}t \leq B .$$
Through a similar proof process to that of \eqref{nnn}, we can conclude that $\widehat{u}(x,t-(t_*-t_{**}))\leq u(x,t)$ 
for all $(x,t)\in\overline{\Omega}\times\mathbb{R}$, which leads to a contradiction.
		
For the latter case, we can find a sequence $\{(x_n,t_n)\}\subset\overline{\Omega}\times \mathbb{R}$ such that  $-B-c_
{e_1}t_*\leq (x_n)_1-c_{e_1}t_n \leq B$ for any $n\in \mathbb{N}$ and $u(x_n,t_n)-\widehat{u}(x_n,t_n-t_*)\rightarrow 0$ 
as $n\rightarrow +\infty$. Decompose $x_n$ into $\overline{x_n}+\widehat{x_n}$, where $\overline{x_n}\in \{(k_1L_1,\cdots,
k_NL_N):k_1,\cdots,k_N\in\mathbb{Z}\}$ and $\widehat{x_n}\in[0,L_1)\times\cdots \times[0,L_N)$. Without loss of 
generality, we assume that $x_{n1}-c_{e_1}t_n\rightarrow \xi_*\in[-B-c_{e_1}t_*,B]$ and $\widehat{x_n}\rightarrow x_*\in 
[0,L_1]\times\cdots\times [0,L_N]$ as $n\rightarrow +\infty$, meanwhile, we can assume that $t_n\rightarrow -\infty$, 
$t_n\rightarrow +\infty$ or $t_n\rightarrow t'\in\mathbb{R}$ as $n\rightarrow +\infty$.
		
If $t_n\rightarrow -\infty$ as $n\rightarrow +\infty$, then by Theorem \ref{th2} one has that
$$\lim_{n \to +\infty}[u(x_n,t_n)-\widehat{u}(x_n,t_n-t_*)]=\Phi_{e_1}(\xi_*,x_*)-\Phi_{e_1}(\xi_*+c_{e_1}t_*,x_*)>0,$$
which leads to a contradiction.
		
If $t_n\rightarrow +\infty$ as $n\rightarrow +\infty$, similarly by Theorem \ref{th5} we can get a contradiction.
		
If $t_n\rightarrow t'$ as $n\rightarrow +\infty$, then $x_{n1}\rightarrow y_1\in \mathbb{R}$ as $n\rightarrow +\infty$. 
In view of this, if $\vert x_n'\vert \rightarrow +\infty$ as $n\rightarrow +\infty$, then by Lemma \ref{lem8} we can get 
a contradiction. Therefore, we may assume without loss of generality that $x_n\rightarrow y(=(y_1,y'))\in\overline
{\Omega}$ as $n\rightarrow +\infty$. In conclusion, we get $u(y,t')=\widehat{u}(y,t'-t_*)$.
		
Let $w(x,t)=u(x,t)-\widehat{u}(x,t-t_*)$ for all $(x,t)\in \overline{\Omega}\times\mathbb{R}$. By \eqref{vvv} we have 
$w(x,t)\geq 0$ for all $(x,t)\in \overline{\Omega}\times\mathbb{R}$. In particular, it holds that $w(y,t')=0$. Since 
$u(x,t)$ and $\widehat{u}(x,t-t_*)$ are the solutions of problem \eqref{f1}, we can conclude that
\begin{equation*}
\begin{cases}
w_t=\Delta w+f(x,u(x,t))-f(x,\widehat{u}(x,t-t_*)),\ \ x\in\Omega,\ t\in\mathbb{R},\\
\frac{\partial u}{\partial n}=0, \ \ x\in\partial\Omega, \ t\in\mathbb{R}.
\end{cases}
\end{equation*}
By the mean value theorem, we can set $f(x,u(x,t))-f(x,\widehat{u}(x,t-t_*))=b(x,t)w$, where $b(x,t)$ is bounded since 
$\sup_{(x,u)\in\mathbb{R}^N\times\mathbb{R}}\vert f_u(x,u)\vert<+\infty$. Then, by using Hopf's lemma and the strong 
maximum principle we can derive that $w(x,t)\equiv 0$ for all $(x,t)\in \overline{\Omega}\times\mathbb{R}$. However, by 
Theorem \ref{th2} we can conclude a contradiction easily. In conclusion, we have $t_*=0$, which means that $w(x,t)=u(x,t)-
\widehat{u}(x,t)\geq 0$ for all $(x,t)\in \overline{\Omega}\times\mathbb{R}$.
		
By reversing the role of \(\widehat{u}(x,t)\) and \(u(x,t)\), one arrives at $\widehat{u}(x,t)-u(x,t)\geq 0$ for all 
$(x,t)\in \overline{\Omega}\times\mathbb{R}$. Then, $\widehat{u}(x,t)\equiv u(x,t)$ for all $(x,t)\in \overline{\Omega}
\times\mathbb{R}$. The proof is complete.
\end{proof}
	
\section*{Acknowledgments}
The third author's work was partially supported by NSF of China (12171120) and by the Fundamental Research Funds for the 
Central Universities.
	
\section*{Date availability statements}
We do not analyze or generate any datasets, because our work proceeds within a theoretical and mathematical approach.

\section*{Conflict of interest}
There is no conflict of interest to declare.


\begin{thebibliography}{99}
		
\footnotesize{	
	
\bibitem{AG} M. Alfaro, T. Giletti, Varying the direction of propagation in reaction-diffusion equations in periodic 
media, {\it Networks and Heterogeneous Media.} {\bf 11} (2016), no.  3, 369-393.

\bibitem{H3} H. Berestycki, F. Hamel, Front propagation in periodic excitable media, {\it Comm. Pure Appl. Math.} {\bf 
55} (2002), no.  8, 949-1032.
						
\bibitem{bh1} H. Berestycki, F. Hamel, Generalized travelling waves for reaction-diffusion equations, In: \emph{
Perspectives in Nonlinear Partial Differential Equations}. In honor of H. Brezis, Amer. Math. Soc., Contemp. Math. 
\textbf{446} (2007), 101-123.
			
\bibitem{jjj} H. Berestycki, F. Hamel, Generalized transition waves and their properties, {\it Comm. Pure Appl. Math.} 
{\bf 65} (2012), no.  5, 592-648.	
					
\bibitem{H1} H. Berestycki, F. Hamel, H. Matano, Bistable traveling waves around an obstacle, {\it Comm. Pure Appl. 
Math.} {\bf 62} (2009), no.  6, 729-788.
			
\bibitem{bhm} H. Berestycki, F. Hamel, H. Matano, Front propagation through a perforated wall，{\it Comm. Pure Appl. 
Math.}, https://doi.org/10.1002/cpa.70051.
			
\bibitem{bo} J. Bouhours,  Robustness for a Liouville type theorem in exterior domains,	{\it J. Dyn. Differ. Equ.} \ 
\textbf{27} (2015), 297-306.
			
\bibitem{bc} J. Brasseur, J. Coville, A counterexample to the Liouville property of some nonlocal problems, {\it  Ann. 
Inst. H. Poincar\'{e} C, Anal. Non Lin\'eaire} \ \textbf{37} (2020), 549-579.
			
\bibitem{bshv}J.  Brasseur, J. Coville, F. Hamel,  E. Valdinoci, Liouville type results for a nonlocal obstacle problem,
{\it Proc. Lond. Math. Soc.} \ \textbf{119} (2019), 291-328 .
			
\bibitem{Z1} Z.-H. Bu, J.-F. He, Qualitative properties of pulsating fronts for reaction-advection-diffusion equations in 
periodic excitable media, {\it Nonlinear Anal. Real World Appl.} {\bf 63} (2022), Paper No. 103418, 19 pp.
			
\bibitem{buchong21} W. Ding, F. Hamel, X.-Q. Zhao, Transition fronts for periodic bistable reaction-diffusion equations, 
{\it Calc. Var. Partial Differ. Equ.} {\bf 54} (2015), 2517-2551.
			
\bibitem{buchong3} W. Ding, F. Hamel, X.-Q. Zhao, Bistable pulsating fronts for reaction-diffusion equations in a 
periodic habitat, {\it Indiana Univ. Math. J.} {\bf 66} (2017), 1189-1265. 
			
\bibitem{buchong2} A. Ducrot, A multi-dimensional bistable nonlinear diffusion equation in a periodic medium, {\it Math. 
Ann.} {\bf 366} (2016), 783-818. 
			
\bibitem{buchong11}	A. Ducrot, T. Giletti, H. Matano, Existence and convergence to a propagating terrace in 
one-dimensional reaction-diffusion equations, {\it Trans. Am. Math. Soc.} {\bf 366} (2014), 5541-5566.
			
\bibitem{fzhao} J. Fang, X.-Q. Zhao, Bistable traveling waves for monotone semiflows with applications, {\it J. Eur. 
Math. Soc.} {\bf 17} (2015), no.  9, 2243-2288.
			
\bibitem{P1} P.  C. Fife, J.  B. Mcleod, The approach of solutions of nonlinear diffusion equations to travelling front 
solutions, {\it Arch. Rational Mech. Anal.} {\bf 65} (1977), no.4, 335-361.
			
\bibitem{D1} D. Gilbarg, N. S. Trudinger, {\it Elliptic Partial Differential Equations of Second Order}, Springer, New 
York, 2001.
			
\bibitem{buchong5} T. Giletti, L. Rossi, Pulsating solutions for multidimensional bistable and multistable equations, 
{\it Math. Ann. } {\bf 378} (2020), no.  3-4, 1555-1611.
			
\bibitem{ghs}H. Guo, F. Hamel, W.-J. Sheng, On the mean speed of bistable transition fronts in unbounded domains, {\it J. 
Math. Pures Appl.}, {\bf 136} (2020), 92-157.
			
\bibitem{H2} H. Guo, H. Monobe, $V$-shaped fronts around an obstacle, {\it Math. Ann.} {\bf 379} (2021), no. 1-2, 661-689.
			
\bibitem{buchong8} F. Hamel, Qualitative properties of monostable pulsating fronts: exponential decay and monotonicity, 
{\it J. Math. Pures Appl.} {\bf 89} (2008), 355-399.
			
\bibitem{hamel}F. Hamel, Bistable transition fronts in $\mathbb R^N$, \textit{Adv. Math.}, \textbf{289} (2016), 279-344.
			
\bibitem{buchong9} F. Hamel, L.  J. Roques, Uniqueness and stability properties of monostable pulsating fronts, {\it J. 
Eur. Math. Soc.} {\bf 13} (2011), no.  2, 345-390. 
			
\bibitem{hr}F. Hamel, L. Rossi, Admissible speeds of transition fronts for non-autonomous monostable equations, 
\emph{SIAM J. Math. Anal.}, \textbf{47} (2015), 3342-3392.
			
\bibitem{hr1}F. Hamel, L. Rossi, Transition fronts for the Fisher-KPP equation, \emph{Trans. Amer. Math. Soc.}, 
\textbf{368} (2016), 8675-8713.
			
{\bibitem{HCWY} B.-S. Han, M.-X. Chang, H.-L. Wei, Y. Yang, Curved fronts for a Belousov-Zhabotinskii system in exterior 
domains, {\it J. Differential Equations}, \textbf{416} (2025), 1660-1695.}
			
\bibitem{A1} A. Hoffman, H.  J. Hupkes, E.  S. Van  Vleck, Entire solutions for bistable lattice differential equations 
with obstacles, {\it Mem. Amer. Math. Soc.} {\bf 250} (2017), no.  1188, v+119 pp.
			
{ \bibitem{JBB} F.-J. Jia, X. Bao, W.-J. Bo, V-shaped front-like solutions of the buffered bistable system in exterior 
domains, {\it J. Differential Equations}, \textbf{431} (2025), 40 pp.}
			
\bibitem{fff} F.-J. Jia, Y.-C. Ma, X.-X. Bao, G.-H. Guo, Pulsating front-like solutions of spatially periodic combustion  
reaction-diffusion equations in exterior domains, {\it Calc. Var.  Partial Differential Equations} {\bf 65} (2026), no. 
6, Paper No. 194.
			
\bibitem{F1} F.-J. Jia, W.-J. Sheng, Z.-C. Wang, Pulsating fronts of spatially periodic bistable reaction-diffusion 
equations around an obstacle, {\it J. Nonlinear Sci.} {\bf 34} (2024), no.  1, Paper No. 4, 37 pp.

\bibitem{F2} F.-J. Jia, Z.-C. Wang, Z.-H. Bu, Propagation phenomena of time-periodic combustion reaction-diffusion 
equations in exterior domains, {\it Discrete Contin. Dyn. Syst.} {\bf 45} (2025), no.  4, 1187-1247.
			
\bibitem{L1} L.-L. Li, Time-periodic planar fronts around an obstacle, {\it J. Nonlinear Sci.} {\bf 31} (2021), no.  6, 
Paper No. 90, 21 pp.
			
\bibitem{J4} Y. Lyu, H. Guo, Z.-C. Wang, On traveling fronts of combustion equations in spatially periodic media, {\it J. 
Dynam. Differential Equations.} {\bf 37} (2025), no.  4, 2869-2929.
			
\bibitem{buchong12}	J. Nolen, L. Ryzhik, Traveling waves in a one-dimensional heterogeneous medium, {\it Ann. Inst. H. 
Poincaré Analyse Non Linéaire} {\bf 26} (2009), 1021-1047.
			
\bibitem{S1} S.-X. Qiao, W.-T. Li, J.-W. Sun, Propagation phenomena for nonlocal dispersal equations in exterior domains, 
{\it J. Dynam. Differential Equations.} {\bf 35} (2023), no.  2, 1099-1131.
			
\bibitem{shen1}W. Shen, Traveling waves in diffusive random media, {\it J. Dyn. Differ. Equ. } \textbf{16} (2004), 
1011-1060.
			
\bibitem{shen2} W. Shen, Traveling waves in time dependent bistable equations, {\it Differ. Integral Equ.} \textbf{19} 
(2006), 241-278.
			
\bibitem{shen3} W. Shen, Existence, uniqueness, and stability of generalized traveling waves in time dependent monostable 
equations, {\it J. Dyn. Differ. Equ.} \textbf{23} (2011), 1-44.
			
\bibitem{shen4}W. Shen, Z. Shen, Stability, uniqueness and recurrence of generalized traveling waves in time 
heterogeneous media of ignition type, {\it Trans. Am. Math. Soc.} \textbf{369} (2017), 2573-2613.
			
\bibitem{slww} W.-J. Sheng,  L. Li,  Z.-C.Wang,  M. Wang, Transition fronts of time periodic bistable reaction-diffusion 
equations around an obstacle, {\it J. Anal. Math.} \textbf{155} (2025), 165-233.
				
\bibitem{sal} Y.-J. Shi, L.-L. Li, Stability of pulsating fronts for bistable reaction-diffusion equations in spatially 
periodic media, {\it J. Math. Anal. Appl.} {\bf 539} (2024), 128516.
			
\bibitem{N1} N. Shigesada, K. Kawasaki, E. Teramoto, Traveling periodic waves in heterogeneous environments, {\it 
Theoret. Population Biol.} {\bf 30} (1986), no.  1, 143-160.

\bibitem{J1} J.-X. Xin, Existence and uniqueness of travelling waves in a reaction-diffusion equation with combustion 
nonlinearity, {\it Indiana Univ. Math. J.} {\bf 40} (1991), no.  3, 985-1008.
			
\bibitem{J2} J.-X. Xin, Existence and stability of traveling waves in periodic media governed by a bistable nonlinearity, 
{\it J. Dynam. Differential Equations.} {\bf 3} (1991), no.  4, 541-573.
			
\bibitem{J3} J.-X. Xin, Existence of planar flame fronts in convective-diffusive periodic media, {\it Arch. Rational 
Mech. Anal.} {\bf 121} (1992), no.  3, 205-233.
			
\bibitem{buchong24} J.-X. Xin, Existence and nonexistence of traveling waves and reaction-diffusion front propagation in 
periodic media, {\it J. Stat. Phys.} {\bf 73} (1993), 893-926.
			
\bibitem{buchong22}	J.-X. Xin, J.-Y. Zhu, Quenching and propagation of bistable reaction-diffusion fronts in 
multidimensional periodic media, {\it Phys. D} {\bf 81} (1995),  94-110.
			
\bibitem{ys} Y.-Y. Yan, W.-J. Sheng, Transition fronts of combustion reaction-diffusion equations around an obstacle, 
{\it Calc. Var. Partial Differential Equations}, \textbf{63} (2024), 63 pp.
			
\bibitem{ys1} Y.-Y. Yan, W.-J. Sheng, Transition fronts of monotone bistable reaction-diffusion systems around an 
obstacle,  {\it Comm. Partial Differential Equations}, https://doi.org/10.1080/03605302.2026.2689044.
			
\bibitem{ys2} Y.-Y. Yan, W.-J. Sheng, V-shaped transition fronts of monotone bistable reaction-diffusion systems in 
exterior domains, arXiv:2603.22896.
			
}
\end{thebibliography}
\end{document}